\newtheorem{thm}{Theorem}[section]
\newtheorem{cor}[thm]{Corollary}
\newtheorem{lem}[thm]{Lemma}
\newtheorem{defn}[thm]{Definition}
\newtheorem{rem}[thm]{Remark}
\numberwithin{equation}{section}
\def\NN{\mathbb{N}}
\def\ZZ{\mathbb{Z}}
\def\RR{\mathbb{R}}
\def\CC{\mathbb{C}}
\def\O{\mathcal{O}}
\def\Q{\mathcal{Q}}
\def\I{\mathcal{I}}
\def\zed{\mathcal{Z}}
\def\sg{\sigma}
\newcommand{\norm}[2]{{\left\|#1\right\|}_{#2}}
\newcommand{\ffl}[2]{(-d_x^{\,2})^{#1}#2}
\newcommand{\Hs}[1]{\mathbb{H}_s^{#1}(-1,1)}
\newcommand{\hs}[1]{\mathbb{H}_s^{#1}}
\newcommand{\ttilde}[1]{\widetilde{\widetilde{#1}}}
\newcommand{\sgn}{\textrm{sgn}}
\keywords{Fractional wave equation, memory, null controllability,  moving control,  moment method.}
	\subjclass[2010]{30E05, 35L05, 35R11, 45K05, 93B05, 93B60, 93C05.}
\begin{document} 

\title{Null-controllability properties of a fractional wave equation with a memory term}

\author[rvt,rvt1]{Umberto Biccari}

\author[rvt2]{Mahamadi Warma}

\address{U. Biccari, DeustoTech, University of Deusto, 48007 Bilbao, Basque Country, Spain.}
\address{U. Biccari, Facultad de Ingenier\'ia, Universidad de Deusto, Avenida de las Universidades 24, 48007 Bilbao, Basque Country, Spain. }
\email{umberto.biccari@deusto.es -- u.biccari@gmail.com}
\address{M. Warma, University of Puerto Rico, Rio Piedras Campus, Department of Mathematics,
 Faculty of Natural Sciences,  17 University AVE. STE 1701  San Juan PR 00925-2537 (USA)}
\email{mahamadi.warma1@upr.edu -- mjwarma@gmail.com}

\begin{abstract}
We study the null-controllability properties of a one-dimensional wave equation with memory associated with the fractional Laplace operator. The goal is not only to drive the displacement and the velocity to rest at some time-instant but also to require the memory term to vanish at the same time, ensuring that the whole process reaches the equilibrium. The problem being equivalent to a coupled nonlocal PDE-ODE system, in which the ODE component has zero velocity of propagation, we are required to use a moving control strategy. Assuming that the control is acting on an open subset $\omega(t)$ which is moving with a constant velocity $c\in\mathbb{R}$, the main result of the paper states that the equation is null controllable in a sufficiently large time $T$ and for initial data belonging to suitable fractional order Sobolev spaces. The proof will use a careful analysis of the spectrum of the operator associated with the system and an application of a classical moment method.
\end{abstract}

\maketitle

\section{Introduction, well-posedness and main results}\label{into_sect}

Let $T>0$ be a real number, $Q:=(0,T)\times(-1,1)$, $Q^c:=(0,T)\times(-1,1)^c$ where $(-1,1)^c:=\RR\setminus(-1,1)$, and let $M\in\RR$. Of concern in this paper is  the analysis of the controllability properties of the following fractional wave equation involving a memory term:
\begin{align}\label{wave_mem}
	\begin{cases}
		\displaystyle y_{tt}(t,x)+\ffl{s}{y}(t,x) - M\int_0^t \ffl{s}{y}(\tau,x)\,d\tau = \mathbf{1}_{\omega(t)}u(t,x) & (t,x)\in Q,
		\\
		y(t,x)=0 & (t,x)\in Q^c,
		\\
		y(0,x)=y^0(x),\;\;y_t(0,x)=y^1(x) & x\in(-1,1).
	\end{cases}
\end{align}

In \eqref{wave_mem}, the memory enters in the principal part, and the control is applied on an open subset $\omega(t)$ of the domain $(-1,1)$ where the waves propagate. The support $\omega(t)$ of the control $u$ at time $t$ moves in space with a constant velocity $c$, that is,
\begin{align*}
	\omega(t) = \omega_0-ct,
\end{align*}
with $\omega_0\subset(-1,1)$ a reference set, open and non empty. The control $u\in L^2(\O)$ is then an applied force localized in $\omega(t)$, where
\begin{align*}
	\O:=\Big\{(t,x):\,\;\, t\in(0,T), x\in\omega(t)\Big\}.
\end{align*}

Moreover, with $\ffl{s}{}$, $s\in (0,1)$, we denote the fractional Laplace operator whose precise definition will be given in the next section.

Evolution equations involving memory terms are an effective tool for modeling a large spectrum of phenomena which apart from their current state are influenced also by their history. They appear in several different applications, including viscoelasticity, non-Fickian diffusion, and thermal processes with memory (see \cite{pruss2013evolutionary,renardy1987mathematical} and the references therein).   Fractional order operators have recently emerged as a modeling alternative in various branches of science. From the long list of phenomena which are more appropriately modeled by fractional differential equations, we mention: viscoelasticity, anomalous transport and diffusion, hereditary phenomena with long memory, nonlocal electrostatics, the latter being relevant to drug design, and L\'evy motions which appear in important models in both applied mathematics and applied probability. A number of stochastic models for explaining anomalous diffusion have been also introduced in the literature; among them we  quote the fractional Brownian motion; the continuous time random walk;  the L\'evy flights; the Schneider grey Brownian motion; and more generally, random walk models based on evolution equations of single and distributed fractional order in  space (see e.g. \cite{DS,GR,Man,Sch,ZL}). In general, a fractional diffusion operator corresponds to a diverging jump length variance in the random walk. We refer to \cite{dihitchhiker,Val} and the references therein for a complete analysis, the derivation and the applications of the fractional Laplace operator. For further details we also refer to \cite{GW-F,GW-CPDE} and their references.

Controllability problems for evolution equations with memory terms have been extensively studied in the past. Among other contributions, we mention \cite{kim1993control,leugering1984exact,leugering1987exact,loreti2012boundary, loreti2010reachability,mustafa2015control,pandolfi2013boundary,romanov2013exact} which, as in our case, deal with hyperbolic type equations. Nevertheless, in the majority of these works the issue has been addressed focusing only on the steering of the state of the system to zero at time $T$, without considering that the presence of the memory introduces additional effects that makes the classical controllability notion not suitable in this context. Indeed, driving the solution of \eqref{wave_mem} to zero is not sufficient to guarantee that the dynamics of the system reaches an equilibrium. If we were considering an equation without memory, once its solution is driven to rest at time $T$ by a control, then it vanishes for all $t\geq T$ also in the absence of control. On the other hand, the introduction of a memory term may produce accumulation effects that affect the stability of the system.
For these reasons, in some recent papers (see, e.g., \cite{biccari2018null,chaves2017controllability,lu2017null}), the classical notion of controllability for a wave equation, requiring the state and its velocity to vanish at time $T$, has been extended with the additional imposition that the control shall \textit{shut down} also the memory effects, which in our case corresponds to the condition
\begin{align*}
	\int_0^T \ffl{s}{y}(\tau,x)\,d\tau = 0.
\end{align*}

This special notion of controllability is generally called \textit{memory-type null controllability}. The aim of the present article is  to completely analyze  the controllability properties of \eqref{wave_mem} in the above mentioned framework.  Our main result states that if $\frac 12<s<1$, then the system \eqref{wave_mem} is memory-type null controllable for large enough time $T$. In another words, there exists a control function $u$ such that  for $T$ large enough, the unique solution $y$ of \eqref{wave_mem} satisfies
\begin{align*}
y(x,T)=y_t(x,T)=\int_0^T \ffl{s}{y}(\tau,x)\,d\tau = 0 \;\mbox{ for a.e. }\; x\in(-1,1).
\end{align*}


Our approach is inspired from the techniques presented in our recent work \cite{biccari2018null} for the Laplace operator, suitably adapted in order to deal with the additional nonlocal features (in addition to the memory effects), introduced into our model by the fractional Laplacian. In more detail, the technique we will use is based on a spectral analysis and an explicit construction of biorthogonal sequences. Although this approach limits our study to a one-dimensional case, it has the additional advantage of offering new insights on the behavior of this type of problems through the detailed study of the properties of the spectrum. Besides, as in other related previous works, we shall view the wave model \eqref{wave_mem} as the coupling of a wave-like PDE with an ODE. This approach will enhance the necessity of a moving control strategy. Indeed, we will show that the memory-type null controllability of the system fails if the support $\mathcal O$ of the control $u$ is time-independent, unless of course in the trivial case where $\mathcal O = Q$. We mention that this strategy of a moving control has been successfully used in the past in the framework of viscoelasticity, the structurally damped wave equation and the Benjamin-Bona-Mahony equation (see e.g. \cite{chaves2014null,martin2013null,rosier2013unique}). 

The mains challenges and novelties of the present article are the following.
\begin{itemize}
\item Here we have to deal with the fractional Laplace operator which is a nonlocal pseudo-differential operator, hence, this case is more challenging than the case of the Laplace operator which is a local differential operator.

\item As we have mentioned above, the proof of the controllability result will be based on a careful analysis of the spectrum of the operator associated to the system.  But here, there is no explicit formula for the eigenvalues and eigenfunctions of the fractional Laplace operator with the zero Dirichlet exterior condition.  This makes it more difficult than the case of the Dirichlet Laplace operator on an interval where the eigenvalues and eigenfunctions are well-known and very easy to compute.

\item This is the first work  that studies moving control problems (with or without memory terms) associated with the fractional Laplace operator.
\end{itemize}


\subsection{Functional setting}

In this section, we introduce some notations, define the function spaces in which the wave equation \eqref{wave_mem}  with memory is well-posed.
We start by giving a rigorous definition of the fractional Laplace operator. To this end, for $0<s<1$, let us consider the space
\begin{align*}
	\mathcal L^1_s(\RR) :=\left\{ u:\RR\longrightarrow\RR\,:\; u\textrm{ measurable },\;\int_{\RR}\frac{|u(x)|}{(1+|x|)^{1+2s}}\,dx<\infty\right\}.
\end{align*}
For $u\in\mathcal L_s^1(\RR)$ and $\varepsilon>0$, we set 
\begin{align*}
(-d_x^{\,2})^s_{\varepsilon}\, u(x) = C_s\,\int_{|x-y|>\varepsilon}\frac{u(x)-u(y)}{|x-y|^{1+2s}}\,dy,\;\;\; x\in\RR.
\end{align*}
The fractional Laplacian is then defined by the following singular integral
\begin{align}\label{fl}
	\ffl{s}{u}(x) = C_s\,\mbox{P.V.}\,\int_{\RR}\frac{u(x)-u(y)}{|x-y|^{1+2s}}\,dy = \lim_{\varepsilon\to 0^+} (-d_x^2)^s_{\varepsilon} u(x), \;\;\; x\in\RR,
\end{align}
provided that the limit exists. Here $C_s$ is an explicit normalization constant given by
\begin{align*}
	C_s:=\frac{s2^{2s}\Gamma\left(\frac{1+2s}{2}\right)}{\sqrt{\pi}\Gamma(1-s)},
\end{align*}
$\Gamma$ being the usual Euler Gamma function. 

We notice that if $0<s<\frac 12$ and $u$ is a smooth function, for example bounded and Lipschitz continuous on $\RR$, then the integral in \eqref{fl} is in fact not really singular near $x$ (see e.g. \cite[Remark 3.1]{dihitchhiker}). Moreover, $\mathcal L_s^1(\RR)$ is the right space for which $v:=(-d_x^{\,2})^s_{\varepsilon}\, u$ exists for every $\varepsilon > 0$, $v$ being also continuous at the continuity points of $u$.
We also mention that the fractional Laplace operator can be also defined as the pseudo-differential operator with symbol $|\xi|^{2s}$. For more details on the fractional Laplacian we refer to \cite{dihitchhiker,SV2,Val,War-PA,warma} and their references.

Let us now define the function spaces in which we are going to work. Let $\Omega\subset\mathbb R$ be an arbitrary open set and $0<s<1$. We let
\begin{align*}
	H^s(\Omega):=\left\{u\in L^2(\Omega):\;\int_{\Omega}\int_{\Omega}\frac{|u(x)-u(y)|^2}{|x-y|^{1+2s}}\;dxdy<\infty\right\}
\end{align*}
be the fractional order Sobolev space endowed with the norm defined by
\begin{align*}
	\|u\|_{H^s(\Omega)}^2=\int_{\Omega}|u|^2\;dx+\int_{\Omega}\int_{\Omega}\frac{|u(x)-u(y)|^2}{|x-y|^{1+2s}}\;dxdy.
\end{align*}
We let
\begin{align*}
	H_0^s(\overline\Omega)=\Big\{u\in H^s(\mathbb R):\; u=0\;\mbox{ in }\;\mathbb R\setminus\Omega\Big\}.
\end{align*}
Let $(-d_x^{\,2})^s_D$ be the self-adjoint operator on $L^2(\Omega)$  associated with the closed and bilinear form
\begin{align*}
	\mathcal E(u,v)=\frac{C_s}{2}\int_{\mathbb R}\int_{\mathbb R}\frac{(u(x)-u(y))(v(x)-v(y))}{|x-y|^{1+2s}}\;dxdy,\;\;u,v\in H_0^s(\overline{\Omega}).
\end{align*}
More precisely,
\begin{align*}
	D((-d_x^{\,2})^s_D)=\Big\{u\in H_0^s(\overline\Omega):\; (-d_x^{\,2})^su\in L^2(\Omega)\Big\},\; (-d_x^{\,2})^s_Du=(-d_x^{\,2})^su.
\end{align*}

Then $(-d_x^{\,2})^s_D$ is the realization in $L^2(\Omega)$ of the fractional Laplace operator $(-d_x^{\,2})^s$ with the zero Dirichlet exterior condition $u=0$ in $\Omega^c:=\RR^N\setminus\Omega$. It is well-known (see e.g. \cite{SV2}) that $(-d_x^{\,2})^s_D$ has a compact resolvent and its eigenvalues form a non-decreasing sequence of real numbers $0<\rho_1\leq\rho_2\leq\cdots\leq\rho_n\leq\cdots$ satisfying $\lim_{n\to+\infty}\rho_n=+\infty$. In addition, the eigenvalues are of finite multiplicity. Let $(e_n)_{n\geq 1}$ be the orthonormal basis of eigenfunctions associated with $(\rho_n)_{n\geq 1}$, that is, 
\begin{align}\label{fl_eigen}
	\begin{cases}
		\ffl{s}{e_n}=\rho_n e_n & \mbox{ in } (-1,1),
		\\
		e_n = 0 & \mbox{ in } (-1,1)^c.
	\end{cases}
\end{align}
For any real $\sigma\ge 0$ we define the space $\mathbb H_s^{\sigma}(\Omega)$ as the domain of the $\sigma$-power of  $(-d_x^{\,2})^s_D$ . More precisely, 
\begin{align}\label{Hsigma}
   \mathbb H_s^{\sigma}(\Omega):=\left\{u\in L^2(\Omega)\;:\; \sum_{n\geq 1}\left|\rho_n^\sigma(u,e_n)_{L^2(\Omega)}\right|^2<+\infty\right\},
\end{align}
and 
\begin{align*}
	\norm{u}{\mathbb H_s^{\sigma}(\Omega)}:=\left(\sum_{n\geq 1}\left|\rho_n^\sigma(u,e_n)_{L^2(\Omega)}\right|^2\right)^{\frac 12}.
\end{align*}

In this setting it is easy to see that $\mathbb H_s^{\frac 12}(\Omega)=H_0^s(\overline{\Omega})$ with equivalent norms.
We shall denote by $\mathbb H_s^{-\sigma}(\Omega)= \left(\mathbb H_s^{\sigma}(\Omega)\right)'$ the dual of $\mathbb H_s^{\sigma}(\Omega)$ with respect to the pivot space $L^2(\Omega)$ and we endow it with the norm 
\begin{align*}
	\norm{u}{\mathbb H_s^{-\sigma}(\Omega)}:=\left(\sum_{n\geq 1}\left|\rho_n^{-\sigma}(u,e_n)_{L^2(\Omega)}\right|^2\right)^{\frac 12}.
\end{align*}
Then we have the following continuous embeddings $\mathbb H_s^{\sigma}(\Omega)\hookrightarrow L^2(\Omega)\hookrightarrow \mathbb H_s^{-\sigma}(\Omega)$.
%

We shall also need the following asymptotic result of the eigenvalues which proof is contained in \cite[Proposition 3]{kwasnicki2012eigenvalues}.

\begin{lem}\label{lemm}
Let $\frac 12<s<1$, $\Omega=(-1,1)$ and $(\rho_n)_{n\in\NN}$ the eigenvalues of $(-d_x^2)^s_D$. Then the following assertions hold.

\begin{enumerate}
\item[(a)] The eigenvalues $(\rho_n)_{n\geq 1}$ are simple.

\item[(b)] There is a constant $\gamma=\gamma(s)\ge \frac{\pi}{2}$ such that for $n$ large enough,
\begin{align}\label{Gap}
\Big(\rho_{n+1}^{\frac{1}{2s}}-\rho_n^{\frac{1}{2s}}\Big)\ge \gamma.
\end{align}
\end{enumerate}
\end{lem}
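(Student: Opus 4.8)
The plan is to deduce both assertions from the sharp spectral asymptotics for the one-dimensional fractional Laplacian on an interval established in \cite{kwasnicki2012eigenvalues}, so that the argument is essentially an extraction from that reference. The engine is the expansion, valid for $\tfrac12<s<1$,
\begin{align*}
	\rho_n^{\frac{1}{2s}} = \frac{n\pi}{2} - \frac{(2-2s)\pi}{8} + r_n,\qquad r_n = O\!\left(\frac1n\right)\quad (n\to\infty),
\end{align*}
together with the more precise control of the remainder $r_n$ contained in \cite[Proposition 3]{kwasnicki2012eigenvalues}. The leading term $\tfrac{n\pi}{2}$ reflects the length $2$ of the interval, exactly as for the classical Dirichlet Laplacian, while the phase shift $\tfrac{(2-2s)\pi}{8}$ comes from the behaviour of the symbol $|\xi|^{2s}$ near the endpoints.

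For assertion (a) I would follow \cite{kwasnicki2012eigenvalues}: for each large $n$ one exhibits an explicit approximate eigenfunction --- a sine wave carrying a boundary phase correction plus a term decaying away from each endpoint --- and, using the Wiener--Hopf factorization of $|\xi|^{2s}$, one shows that a small interval around $\tfrac{n\pi}{2}-\tfrac{(2-2s)\pi}{8}$ contains exactly one value $\rho_n^{1/(2s)}$; the finitely many remaining low eigenvalues are treated separately. Since reproducing this boundary-layer analysis is outside the scope of the present paper, at this point the argument simply quotes \cite[Proposition 3]{kwasnicki2012eigenvalues}.

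For assertion (b), differencing the expansion gives
\begin{align*}
	\rho_{n+1}^{\frac{1}{2s}} - \rho_n^{\frac{1}{2s}} = \frac{\pi}{2} + \big(r_{n+1}-r_n\big),
\end{align*}
so the gap already converges to $\tfrac{\pi}{2}$ as $n\to\infty$. To upgrade this to \eqref{Gap} with a constant $\gamma\ge\tfrac{\pi}{2}$ --- which is what the moment method will require later --- I would invoke the refined estimate on $r_n$ from \cite[Proposition 3]{kwasnicki2012eigenvalues}, which pins down the remainder precisely enough to guarantee $r_{n+1}-r_n\ge 0$ for all $n$ large (equivalently, $n\mapsto \rho_n^{1/(2s)}-\tfrac{n\pi}{2}$ is eventually nondecreasing), whence \eqref{Gap} holds with $\gamma=\tfrac{\pi}{2}$. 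If one only needed some positive gap constant, any $\gamma<\tfrac{\pi}{2}$ would follow at once from $r_n=O(1/n)$; the content of the lemma is precisely that $\gamma$ can be taken $\ge\tfrac{\pi}{2}$.

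The only genuine obstacle is the spectral expansion itself, namely establishing the asymptotics with a remainder controlled sharply enough to decide the sign of $r_{n+1}-r_n$; this is the heart of \cite{kwasnicki2012eigenvalues} and rests on a delicate Fourier/Wiener--Hopf construction with no analogue for general domains --- in particular, the Weyl law alone would yield only the Cesàro averages of the gaps, not the individual lower bound. Granting that result, the lemma follows immediately.
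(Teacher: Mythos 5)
Your proposal is correct and takes essentially the same route as the paper: the paper offers no proof of this lemma at all, deferring entirely to \cite[Proposition 3]{kwasnicki2012eigenvalues}, and your argument likewise reduces both assertions to that reference (while correctly observing that the crude $O(1/n)$ remainder in the expansion $\rho_n^{1/(2s)}=\tfrac{n\pi}{2}-\tfrac{(2-2s)\pi}{8}+r_n$ would only yield gap constants $\gamma<\tfrac{\pi}{2}$, so the claim $\gamma\ge\tfrac{\pi}{2}$ genuinely needs the refined remainder control of the cited result).
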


\subsection{Some well-posedness results}

Here we will not consider the case $M=0$ which corresponds to a fractional wave equation without memory for which the well posedness may be easily obtained through classical semigroup techniques. We have the following existence and continuous dependence result.

\begin{thm}\label{wp_thm}
Let $0<s<1$. For any $(y^0,y^1)\in\Hs{1}\times L^2(-1,1)$ and $u\in L^2(\O)$, the system \eqref{wave_mem} has a unique solution $y\in C([0,T];\Hs{1})\cap C^1([0,T];L^2(-1,1))$. Moreover, there is a constant $\mathcal C>0$ (depending only on $T$) such that
\begin{align}\label{norm_est}
    \norm{y}{C([0,T];\Hs{1})\cap C^1([0,T];L^2(-1,1))}\leq \mathcal C\left(\norm{(y^0,y^1)}{\Hs{1}\times L^2(-1,1)}+\norm{u}{L^2(\O)}\right).
\end{align}
\end{thm}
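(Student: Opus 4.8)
The plan is to recast the second-order equation \eqref{wave_mem} as a first-order coupled system. Introduce the memory variable $z(t,x):=\int_0^t \ffl{s}{y}(\tau,x)\,d\tau$, so that $z_t = \ffl{s}{y}$ and $z(0,\cdot)=0$. Then \eqref{wave_mem} becomes $y_{tt} + \ffl{s}{y} - Mz = \mathbf 1_{\omega(t)}u$ together with the ODE $z_t = \ffl{s}{y}$. Setting $Y=(y,y_t,z)^{\top}$ this reads $Y_t = \mathcal A Y + F(t)$, where $\mathcal A$ acts as
\begin{align*}
\mathcal A\begin{pmatrix} y \\ v \\ z\end{pmatrix} = \begin{pmatrix} v \\ -\ffl{s}{y} + Mz \\ \ffl{s}{y}\end{pmatrix},
\end{align*}
and $F(t) = (0,\mathbf 1_{\omega(t)}u(t,\cdot),0)^{\top}$. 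I would work in the Hilbert space $\mathcal H:=\Hs{1}\times L^2(-1,1)\times \Hs{1}$, using the spectral spaces $\mathbb H_s^\sigma$ introduced in \eqref{Hsigma} (with $\Omega=(-1,1)$), so that $\ffl{s}{}$ maps $\Hs{1}$ continuously to $\Hs{-1}\simeq (\Hs{1})'$ and the form $\mathcal E$ defines the duality.

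The key point is that the operator $\mathcal A$, with domain $D(\mathcal A)=D((-d_x^{\,2})^s_D)\times \Hs{1}\times \Hs{1}$, generates a strongly continuous group (or at least a $C_0$-semigroup) on $\mathcal H$. I would verify this either by a Lumer–Phillips/dissipativity argument after an equivalent-norm renormalization of $\mathcal H$ that absorbs the zeroth-order coupling term $Mz$, or — more cheaply — by writing $\mathcal A = \mathcal A_0 + \mathcal B$ where $\mathcal A_0$ is the skew-adjoint generator of the memoryless fractional wave group on $\Hs{1}\times L^2(-1,1)$ (standard semigroup theory, since $(-d_x^{\,2})^s_D$ is self-adjoint, positive, with compact resolvent by Lemma~\ref{lemm} and the discussion preceding it) augmented by a trivial component for $z$, and $\mathcal B(y,v,z)=(0,Mz,\ffl{s}{y})$. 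Here $\mathcal B$ is bounded from $\mathcal H$ into $\mathcal H$ once we observe $\ffl{s}{y}\in\Hs{-1}$ — so one should actually place the $z$-equation's output in $\Hs{-1}$; to keep $z\in C([0,T];\Hs{1})$ it is cleaner to integrate: $z(t)=\int_0^t \ffl{s}{y(\tau)}\,d\tau$ gains one derivative in time but we need regularity in space, so I would instead simply define $z$ by this integral \emph{after} solving for $y$, and prove well-posedness of the $(y,y_t)$-pair alone by a Duhamel fixed point. In any case, bounded perturbation of a $C_0$-semigroup yields a $C_0$-semigroup $(e^{t\mathcal A})_{t\ge 0}$ on $\mathcal H$.

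With the generation property in hand, existence, uniqueness and the estimate follow from the standard variation-of-constants formula: the mild solution is
\begin{align*}
Y(t) = e^{t\mathcal A}Y^0 + \int_0^t e^{(t-\tau)\mathcal A}F(\tau)\,d\tau,\qquad Y^0=(y^0,y^1,0)^{\top}.
\end{align*}
Since $t\mapsto \mathbf 1_{\omega(t)}u(t,\cdot)$ belongs to $L^2(0,T;L^2(-1,1))\hookrightarrow L^1(0,T;L^2(-1,1))$ (the moving support changes nothing here, as pointwise in $t$ it is just multiplication by an indicator, bounded on $L^2$), the Duhamel term is a well-defined element of $C([0,T];\mathcal H)$, and $\|Y\|_{C([0,T];\mathcal H)}\le C_T(\|Y^0\|_{\mathcal H}+\|u\|_{L^2(\mathcal O)})$ by the bound $\|e^{t\mathcal A}\|\le Me^{\omega t}$. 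Reading off the first two components gives $y\in C([0,T];\Hs{1})\cap C^1([0,T];L^2(-1,1))$ with the estimate \eqref{norm_est}; uniqueness is immediate from linearity and uniqueness of mild solutions. Higher regularity for data in $D(\mathcal A)$ (giving genuine, not just mild, solutions) follows from the classical regularity theory for $C_0$-semigroups, and a density argument transfers the identities to the stated class.

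The main obstacle, and the only delicate bookkeeping point, is the functional-analytic placement of the memory component $z$ and the term $\ffl{s}{y}$: one must choose the spaces so that $\mathcal B$ (or the Duhamel kernel) is bounded, which forces either working with $z\in\Hs{-1}$ or exploiting the time integration that defines $z$. Because the fractional Laplacian, despite being nonlocal, is here just a fixed self-adjoint operator with the spectral calculus of \eqref{Hsigma} available, this is entirely analogous to the classical wave-with-memory case (cf. \cite{biccari2018null}) and presents no genuine new difficulty beyond careful choice of norms; the nonlocality of $(-d_x^{\,2})^s$ plays no role in the well-posedness argument, only in the later spectral analysis.
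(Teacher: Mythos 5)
Your proposal is correct in outline and, once the semigroup packaging is stripped away, rests on the same perturbative idea as the paper, but the two arguments are organized differently, so a comparison is worthwhile. The paper never introduces the memory state $z$ in its well-posedness proof: it defines the map $\mathcal F:\widetilde y\mapsto\widehat y$ sending $\widetilde y$ to the solution of the memoryless inhomogeneous equation with source $M\int_0^t\ffl{s}{\widetilde y}\,d\tau+\mathbf 1_{\omega(t)}u$, shows $\mathcal F$ is a contraction on $C([0,T];\Hs{1})\cap C^1([0,T];L^2(-1,1))$ for the exponentially weighted norm with $\alpha=2\mathcal C^2M^2$, and then obtains \eqref{norm_est} by Gronwall. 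Your primary route --- first-order system, generation by bounded perturbation of the free wave group, Duhamel --- delivers the exponential bound $\|e^{t\mathcal A}\|\le Me^{\omega t}$ directly from the perturbation theorem, so no weighted norm and no separate Gronwall step are needed; your fallback (``prove well-posedness of the $(y,y_t)$-pair alone by a Duhamel fixed point and define $z$ afterwards'') is essentially the paper's proof. The bookkeeping point you flag is real and you resolve it in the right direction: on $\mathcal H=\Hs{1}\times L^2(-1,1)\times\Hs{1}$ the perturbation $\mathcal B(y,v,z)=(0,Mz,\ffl{s}{y})$ is \emph{not} bounded (for $y\in\Hs{1}$ one only gets $\ffl{s}{y}\in L^2(-1,1)$, not $\Hs{1}$), but placing the third component in $L^2(-1,1)$ or $\Hs{-1}$ makes $\mathcal B$ bounded, and since the theorem asserts nothing about the regularity of $z$, nothing is lost. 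One caveat applies equally to your write-up and to the paper's: the invariant pairings for the group generated by $\left(\begin{smallmatrix}0&I\\-A&0\end{smallmatrix}\right)$ with $A=(-d_x^{\,2})^s_D$ are $\Hs{\alpha+1/2}\times\Hs{\alpha}$, so ``standard semigroup theory'' gives generation on $\Hs{1/2}\times L^2(-1,1)$ or on $\Hs{1}\times\Hs{1/2}$, not on the mixed pairing $\Hs{1}\times L^2(-1,1)$ appearing in the statement; the paper's Step~1 makes the same unexamined leap when it invokes well-posedness of the inhomogeneous wave equation in that pairing with a source merely in $L^2((0,T);\Hs{-1})$, so this is a defect of the theorem as stated rather than of your argument relative to the paper's.
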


\begin{proof}
Our proof is based on a standard fixed point argument. Let 
\begin{align*}
    \zed:= C([0,T];\Hs{1})\cap C^1([0,T];L^2(-1,1))
\end{align*}
be endowed with the  norm
\begin{align*}
    \norm{y}{\zed}:=\left(\norm{e^{-\alpha t}y}{C([0,T];\Hs{1})}^2 + \norm{e^{-\alpha t}y}{C^1([0,T];L^2(-1,1))}^2\right)^{\frac 12},
\end{align*}
where $\alpha$ is a positive real number whose value will be given below. Clearly,
\begin{align*}
    e^{-\alpha T}\norm{y}{C([0,T];\Hs{1})\cap C^1([0,T];L^2(-1,1))}\leq\norm{y}{\zed}\leq \norm{y}{C([0,T];\Hs{1})\cap C^1([0,T];L^2(-1,1))}.
\end{align*}
Therefore, $(\zed,\norm{\cdot}{\zed})$ is a Banach space equivalent to $C([0,T];\Hs{1})\cap C^1([0,T];L^2(-1,1))$. 

{\bf Step 1}. Define the map
\begin{align*}
    \mathcal F:\zed\rightarrow \zed,\;\;
    \widetilde{y}\mapsto \widehat{y},
\end{align*}
where $\widehat{y}$ is the solution to \eqref{wave_mem} with $\int_0^t \ffl{s}{y}(\tau)\,d\tau$ being replaced by $\int_0^t \ffl{s}{\widetilde{y}}(\tau)\,d\tau$. That is,
\begin{align*}
	\begin{cases}
		\displaystyle \widehat{y}_{tt}(t,x)+\ffl{s}{\widehat{y}}(t,x) = M\int_0^t \ffl{s}{\widetilde{y}}(\tau,x)\,d\tau +  \mathbf{1}_{\omega(t)}u(t,x) & (t,x)\in Q,
		\\
		\widehat{y}(t,x)=0 & (t,x)\in Q^c,
		\\
		\widehat{y}(0,x)=y^0(x),\;\;\widehat{y}_t(0,x)=y^1(x) & x\in(-1,1).
	\end{cases}
\end{align*}

We claim that  $\mathcal F(\zed)\subset\zed$. Indeed, using the well-posedness results for the wave equation with non-homogeneous terms we can deduce that 
\begin{align}\label{ee}
    \norm{\widehat{y}}{\zed} &\leq \norm{\widehat{y}}{C([0,T];\Hs{1})\cap C^1([0,T];L^2(-1,1))}
   \notag \\
    &\leq \mathcal C\left[\norm{(y^0,y^1)}{\Hs{1}\times L^2(-1,1)} + \norm{u}{L^2(\O)} + \norm{M\int_0^t \ffl{s}{\,\widetilde{y}}(\tau)\,d\tau}{L^2((0,T);\Hs{-1})}\right]
   \notag \\
    &\leq \mathcal C\left[\norm{(y^0,y^1)}{\Hs{1}\times L^2(-1,1)} + \norm{u}{L^2(\O)} + |M|\left(\int_0^T\norm{\ffl{s}{\,\widetilde{y}}(t)}{\Hs{-1}}^2\,dt\right)^{\frac 12}\right]
  \notag  \\
    &= \mathcal C\left[\norm{(y^0,y^1)}{\Hs{1}\times L^2(-1,1)} + \norm{u}{L^2(\O)} + |M|\norm{\widetilde{y}}{L^2((0,T);\Hs{1})}\right],
\end{align}
where $\mathcal C$ is a positive constant depending only on $T$. Hence, $\mathcal F(\zed)\subset\zed$ and the claim is proved. 

{\bf Step 2}. Given $\ttilde{y}\in\zed$, let $\widehat{\widehat{y}}=\mathcal F(\,\ttilde{y}\,)$ be the corresponding solution to \eqref{wave_mem}. Using \eqref{ee} we have that

\begin{align*}
    e^{-\alpha t}& \left(\norm{\mathcal F(\,\widetilde{y}\,)(t)-\mathcal F(\,\ttilde{y}\,)(t)}{\Hs{1}} + \norm{\mathcal F(\,\widetilde{y}\,)_t(t)-\mathcal F(\,\ttilde{y}\,)_t(t)}{L^2(-1,1)}\right)
    \\
    &\leq e^{-\alpha t}\norm{\widehat{y}-\widehat{\widehat{y}}}{C([0,t];\Hs{1})\cap C^1([0,t];L^2(-1,1))}
    \\
    &\leq \mathcal C|M|e^{-\alpha t}\left(\int_0^t \norm{\widetilde{y}(\tau)-\ttilde{y}(\tau)}{\Hs{1}}^2\,d\tau\right)^{\frac 12}
    \\
    &= \mathcal C|M|\left(\int_0^t e^{-2\alpha (t-\tau)}\norm{e^{-\alpha \tau}\left(\widetilde{y}(\tau)-\ttilde{y}(\tau)\right)}{\Hs{1}}^2\,d\tau\right)^{\frac 12}
    \\
    &\leq \mathcal C|M|\left(\int_0^t e^{-2\alpha (t-\tau)}\,d\tau\right)^{\frac 12} \norm{\widetilde{y}-\ttilde{y}}{\zed} = \mathcal C|M|\left(\frac{1-e^{-2\alpha t}}{2\alpha}\right)^{\frac 12} \norm{\widetilde{y}-\ttilde{y}}{\zed}.
\end{align*}
This implies that

\begin{align*}
    \norm{\mathcal F(\,\widetilde{y}\,)-\mathcal F(\,\ttilde{y}\,)}{\zed} \leq \frac{\mathcal C|M|}{\sqrt{2\alpha}} \norm{\widetilde{y}-\ttilde{y}}{\zed}.
\end{align*}
Hence, taking $\alpha=2\mathcal C^2M^2$ we obtain
\begin{align*}
    \norm{\mathcal F(\,\widetilde{y}\,)-\mathcal F(\,\ttilde{y}\,)}{\zed} \leq \frac 12 \norm{\widetilde{y}-\ttilde{y}}{\zed}.
\end{align*}
We have shown that $\mathcal F$ is a contraction. 

{\bf Step 3}. Since  $\mathcal F$ is a contraction, it has a unique fixed point which is the solution to \eqref{wave_mem}. Let now $y$ be this unique solution. Then
\begin{align*}
	\norm{y(t)}{\Hs{1}}^2 &+ \norm{y_t(t)}{L^2(-1,1)}^2 
	\\
	\leq&\, \mathcal C\left(\norm{(y^0,y^1)}{\Hs{1}\times L^2(-1,1)}^2 + \norm{u}{L^2(\O)}^2 + |M|\int_0^t\norm{y(\tau)}{\Hs{1}}^2\,d\tau\right)
	\\
	\leq&\, \mathcal C\left(\norm{(y^0,y^1)}{\Hs{1}\times L^2(-1,1)}^2 + \norm{u}{L^2(\O)}^2\right) + \mathcal C|M|\int_0^t\left(\norm{y(\tau)}{\Hs{1}}^2+\norm{y_\tau(\tau)}{L^2(-1,1)}^2\,d\tau\right).
\end{align*}
Thus, using Gronwall's inequality we obtain that
\begin{align*}
    \norm{y(t)}{\Hs{1}}^2 + \norm{y_t(t)}{L^2(-1,1)}^2 \leq \mathcal C\left(1 + \mathcal C|M|e^{\mathcal C|M|t}\right)\left(\norm{(y^0,y^1)}{\Hs{1}\times L^2(-1,1)}^2 + \norm{u}{L^2(\O)}^2\right).
\end{align*}
We have shown \eqref{norm_est} and  the proof is finished.
\end{proof}

\subsection{The main result}
In this section we state the main result of the article. We start by recalling that in the setting of problems with memory terms, the classical notion of controllability which requires that $y(T,x)=y_t(T,x)=0$, is not completely accurate. Indeed, in order to guarantee that the dynamics can reach the equilibrium, also the memory term has to be taken into account. In particular, it is necessary that also the memory reaches the null value, that is,
\begin{align}\label{eq:in0}
    \int_0^T \ffl{s}{y}(\tau,x)\,d\tau = 0.
\end{align}

If instead, we do not pay attention to turn off the accumulated memory, i.e. if \eqref{eq:in0} does not hold, then the solution $y$ will not stay at the rest after time $T$ as $t$ evolves. Hence, the correct notion of controllability in this framework is given by the following definition (see e.g., \cite[Definition 1.1]{lu2017null}).

\begin{defn}\label{control_def}
Given $\sigma\ge 0$, the system \eqref{wave_mem} is said to be memory-type null controllable at time $T$ if for any couple of initial data $(y^0,y^1)\in\Hs{\sg+1}\times\Hs{\sg}$, there exits a control $u\in L^2(\mathcal O)$ such that the corresponding solution $y$ satisfies
\begin{align}\label{mem_control}
    y(T,x) = y_t(T,x) = \int_0^T \ffl{\tau}{y}(\tau,x)\,d\tau = 0,\qquad x\in(-1,1).
\end{align}
\end{defn}

The main result of the present work is the following theorem.

\begin{thm}\label{control_thm}
Let $\frac 12<s<1$, $\gamma>0$ be given by \eqref{Gap}, $c\in\RR\setminus\{-\gamma,0,\gamma\}$, $T>2\pi\left(\frac{1}{|c|}+\frac{1}{|c+\gamma|}+\frac{1}{|c-\gamma|}\right)$, $\omega_0$ a non-empty open set in $(-1,1)$ and
\begin{align*}
    \omega(t) = \omega_0-ct,\qquad t\in [0,T].
\end{align*}
Then for each  $(y^0,y^1)\in \Hs{3}\times\Hs{2}$, there exists a control $u\in L^2({\mathcal O})$ such that the solution $y$ of \eqref{wave_mem} satisfies \eqref{mem_control}.
\end{thm}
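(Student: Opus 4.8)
The strategy is to recast \eqref{wave_mem} as a first-order coupled system, decompose it along the eigenbasis $(e_n)_{n\ge 1}$ of $(-d_x^{\,2})^s_D$, and then reduce the memory-type null controllability to a moment problem on the Fourier coefficients, which is solved via a biorthogonal family. First I would introduce the memory variable $z(t,x):=\int_0^t \ffl{s}{y}(\tau,x)\,d\tau$, so that \eqref{wave_mem} becomes the PDE--ODE system $y_{tt}+\ffl{s}{y}-Mz=\mathbf 1_{\omega(t)}u$, $z_t=\ffl{s}{y}$, with $y(0)=y^0$, $y_t(0)=y^1$, $z(0)=0$, and the target \eqref{mem_control} becomes $y(T)=y_t(T)=z(T)=0$. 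Expanding $y=\sum_n y_n(t)e_n$, $z=\sum_n z_n(t)e_n$, $u=\sum_n u_n(t)e_n$ and using \eqref{fl_eigen}, the $n$-th component solves the finite-dimensional controlled ODE $y_n''+\rho_n y_n - M z_n = u_n$, $z_n'=\rho_n y_n$, i.e. a $3\times 3$ system $X_n'=A_nX_n+Bu_n$ with $X_n=(y_n,y_n',z_n)^\top$. The characteristic polynomial of $A_n$ is $\lambda^3+\rho_n\lambda - M\rho_n=0$; one should check (for $n$ large, using $\rho_n\to\infty$ from Lemma~\ref{lemm}) that it has one real root $\mu_n<0$ of size $O(M)$ and a pair of complex roots $\pm i\beta_n + O(\rho_n^{-1})$ with $\beta_n=\rho_n^{1/2}+O(\rho_n^{-1/2})$, so that the spectrum of the full operator is, up to finitely many exceptions, $\{\mu_n\}\cup\{\pm i\beta_n\}$, with the gap property on the $\beta_n$ inherited from \eqref{Gap}.

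Next I would handle the moving control. Write $u_n(t)=\int_{\omega(t)}u(t,x)e_n(x)\,dx$; the key point is that the moving support $\omega_0-ct$ interacts with the time-oscillation. Following the Duhamel representation for $X_n$ and imposing $X_n(T)=0$, one obtains, for each $n$ and each eigenvalue $\nu\in\{\mu_n,i\beta_n,-i\beta_n\}$, a moment equation of the schematic form
\begin{align}\label{moment_eq}
\int_0^T e^{-\nu t}\,u_n(t)\,dt = d_{n,\nu},
\end{align}
where $d_{n,\nu}$ are explicit numbers built from the initial data $(y^0,y^1)$ and the entries of $X_n(0)$, decaying at a rate dictated by the regularity assumption $(y^0,y^1)\in\Hs{3}\times\Hs{2}$. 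Substituting $u_n(t)=\int_{\omega(t)}u(t,x)e_n(x)\,dx$ and changing variables $x=\xi-ct$ turns \eqref{moment_eq} into a moment problem in the \emph{single} function $v(t,\xi):=u(t,\xi-ct)\mathbf 1_{\omega_0}(\xi)$ against the family of functions $(t,\xi)\mapsto e^{-\nu t}e_n(\xi-ct)$; crucially, because of the drift, the relevant time-frequencies become $\nu - c\,(\text{effective spatial frequency})$, which separates $\{\pm i\beta_n\}$ into the families $\{i(\beta_n\pm c\sqrt{\rho_n}/\rho_n^{1/2}\cdots)\}$ and, together with $\mu_n$, yields three ``branches'' of exponents whose separation is governed by $\gamma$, $\gamma+c$, $\gamma-c$ — this is exactly where the hypotheses $c\notin\{-\gamma,0,\gamma\}$ and $T>2\pi(|c|^{-1}+|c+\gamma|^{-1}+|c-\gamma|^{-1})$ enter.

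The core analytic step is then to construct a biorthogonal sequence to the exponential family $\{e^{i\lambda_k t}\}_k$ (where $\{\lambda_k\}$ enumerates all three branches) in $L^2(0,T)$, with norm bounds on the biorthogonal elements that grow at most polynomially (or sub-exponentially controlled by the time $T$), and to check that $\sum_k |d_{n,\nu}|\cdot\|\text{biorthogonal}\|$ converges, so that $u$ can be defined as the resulting series. Here one needs: (i) a lower bound on the gaps between consecutive $\lambda_k$ across the three interleaved branches — this requires $c\ne 0,\pm\gamma$ so the branches do not asymptotically collide — and (ii) an upper density bound forcing $T$ to exceed $2\pi$ times the sum of reciprocal ``branch gaps,'' which is precisely the stated threshold; the existence and estimates of such biorthogonals is a classical result for real sequences with a uniform gap (Ingham-type / Haraux–Seidman, or the Fattorini–Russell construction), and I would also need to absorb the finitely many anomalous low modes by a compactness/controllability argument for the corresponding finite-dimensional system, checking the Kalman rank condition for $(A_n,B)$ (which holds iff $M\ne 0$ and the roots are distinct, matching the restriction to $M\ne 0$). \textbf{The main obstacle} I anticipate is twofold: first, establishing sufficiently sharp asymptotics and, more importantly, \emph{separation} estimates for the perturbed exponents — in particular that the complex roots near $\pm i\rho_n^{1/2}$ produce, after the moving-frame shift by $c$, three genuinely non-resonant branches with uniform gaps — since the eigenvalues $\rho_n$ of the fractional Laplacian are not explicit and only Lemma~\ref{lemm} is available; second, controlling the growth of the biorthogonal family well enough, against the decay of $d_{n,\nu}$ coming only from $\Hs{3}\times\Hs{2}$ regularity, to guarantee $u\in L^2(\mathcal O)$. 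Proving the necessity of a moving control (failure when $c=0$) would be a separate, easier argument: when $\omega$ is fixed, the ODE component $z_t=\ffl{s}{y}$ has zero propagation speed, so one exhibits an eigenfunction $e_n$ supported away from $\omega$ direction and shows the corresponding mode is uncontrollable.
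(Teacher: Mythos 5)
Your overall architecture --- the PDE--ODE reformulation with $z=\int_0^t\ffl{s}{y}\,d\tau$, the cubic characteristic equation $\mu^3+\rho_n\mu-M\rho_n=0$, three branches of exponents shifted by $\pm ic\rho_{n}^{1/(2s)}$ in the moving frame, and a biorthogonal/moment-method conclusion --- is the same as the paper's. But there are two genuine gaps. First, your reduction of the moving-control condition to a scalar exponential moment problem hinges on factorizing $e_n(\xi-ct)$ into a spatial profile times a time-oscillation; this works only if $e_n$ is a complex exponential. The Dirichlet eigenfunctions of $(-d_x^{\,2})^s_D$ on $(-1,1)$ are not explicit and are not exponentials (the paper lists this as one of its main difficulties), so the ``effective spatial frequency'' shift $\nu\mapsto\nu\pm ic\rho_{n}^{1/(2s)}$ you invoke does not follow from your decomposition in the basis $(e_n)$. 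The paper avoids this by performing the change of variables $x\mapsto x+ct$ at the PDE level first (which produces the extra local terms $c^2\varphi_{xx}+2c\varphi_{xt}$ and is one reason the higher regularity $\Hs{3}\times\Hs{2}$ is needed), and then diagonalizing the resulting operator $\mathcal A_c$ in the system of complex exponentials $e^{i\,\sgn(n)\rho_{|n|}^{1/(2s)}x}$ --- using $\ffl{s}{e^{i\kappa x}}=|\kappa|^{2s}e^{i\kappa x}$ --- proving that the associated vector eigenfunctions form a Riesz basis of the state space (Theorem \ref{te:lari}). Without some substitute for that step your moment equations are not of the exponential form you need.

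Second, your appeal to uniform-gap Ingham/Fattorini--Russell biorthogonals is not available here: even for $c\notin\{-\gamma,0,\gamma\}$ the three branches do not have a uniform gap. Lemma \ref{lemma.dist23} shows that for each large $m$ there is an $n_m$ with $\left|\lambda_m^2-\lambda_{-n_m}^2\right|$ (resp.\ $\left|\lambda_m^2-\lambda_{n_m}^3\right|$) only bounded below by $\delta'/\rho_m$, and for $c\in\mathcal V$ there is even an exact double eigenvalue. The condition $c\neq\pm\gamma$ controls the asymptotic densities of the branches, not their pairwise separation. Consequently the biorthogonal norms must be allowed to grow like $\rho_{|m|}$ --- this is the content of the product construction in Theorems \ref{te:pprod} and \ref{te:bio}, via the lower bound $\left|P'(-i\overline{\lambda}_m^j)\right|\geq C_2/\rho_m$ --- and it is precisely this loss that is absorbed by the $\Hs{3}\times\Hs{2}$ regularity in the final estimate; a uniform-gap theorem would neither apply nor explain where that regularity is consumed. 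A minor further slip: the real root of $\mu^3+\rho_n\mu-M\rho_n=0$ lies between $0$ and $M$, so your claim that it is negative holds only for $M<0$; indeed the paper notes that the associated operator is not dissipative precisely because these real parts can be positive.
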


The proof of Theorem \ref{control_thm} will be given in Section \ref{control_sect}. It will be based on the moment method and a careful analysis of the spectrum of the operator associated with the system.

The rest of the paper is organized as follows. 
In Section \ref{cv_sect}, we give a characterization of the control problem through the adjoint equation associated to \eqref{wave_mem}. Section \ref{spectrum_sect} is devoted to a complete spectral analysis for the problem which will then be fundamental in the construction of a biorthogonal sequence in Section \ref{bio_sec} and the resolution of the moment problem. Finally, in Section \ref{control_sect} we give the proof of our controllability result.

\section{Intermediate results}\label{cv_sect}

Here, we give a characterization of the control problem by means of the adjoint system associated with \eqref{wave_mem}. Using a simple integration by parts we have that the following system
\begin{align}\label{wave_mem_adj}
    \begin{cases}
        \displaystyle p_{tt}(t,x)+\ffl{s}{p}(t,x) - M\int_t^T \ffl{s}{p}(\tau,x)\,d\tau - M\ffl{s}{q^0}(x) = 0 & (t,x)\in Q,
        \\
        p(t,x)=0 & (t,x)\in Q^c,
        \\
        p(T,x)=p^0(x),\;\;p_t(T,x)=p^1(x) & x\in(-1,1),
    \end{cases}
\end{align}
can be viewed as the adjoint system associated with \eqref{wave_mem}.

Let us mention that the term $M\ffl{s}{q^0}(x)$ takes into account the presence of the memory in \eqref{wave_mem} and the fact that according to Definition \ref{control_def}, the controllability of our original equation is really reached only if also this memory is driven to zero. We have the following result.

\begin{lem}\label{control_id_lemma}
Let $0<s<1$. Then the following assertions are equivalent.
\begin{enumerate}
\item[(i)]The system \eqref{wave_mem} is memory-type null controllable in time $T$.

 \item[(ii)] For each initial data $(y^0,y^1)\in\Hs{1}\times L^2(-1,1)$, there exits a control $u\in L^2(\mathcal O)$ such that 
\begin{align}\label{control_id}
    \int_0^T\int_{\omega(t)} u(t,x)\bar{p}(t,x)\,dxdt = \big\langle y^0(\cdot),p_t(0,\cdot)\big\rangle_{\hs{1}(-1,1),\hs{-1}(-1,1)} - \int_{-1}^1 y^1(x)\bar{p}(0,x)\,dx,
\end{align}
for any $(p^0,p^1,q^0)\in L^2(-1,1)\times\Hs{-1}\times \Hs{1}$, where $p$ is the unique solution to  \eqref{wave_mem_adj}.
\end{enumerate}
\end{lem}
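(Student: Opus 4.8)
The plan is to prove the equivalence by the classical transposition (duality) argument. We view \eqref{wave_mem} as the coupled PDE--ODE system obtained by introducing the memory variable
\begin{align*}
z(t,x):=\int_0^t\ffl{s}{y}(\tau,x)\,d\tau,
\end{align*}
which satisfies $z_t=\ffl{s}{y}$ and $z(0,\cdot)=0$, so that the memory-type null controllability condition \eqref{mem_control} reads simply $y(T,\cdot)=y_t(T,\cdot)=z(T,\cdot)=0$. The first step is to record the well-posedness of the adjoint system \eqref{wave_mem_adj}: under the time reversal $t\mapsto T-t$ it becomes a system of the form \eqref{wave_mem} with zero control, initial data $(p^0,-p^1)$ and an extra time-independent source $M\ffl{s}{q^0}\in\Hs{-1}$ (here we use $q^0\in\Hs{1}$). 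Running the same fixed-point argument as in Theorem \ref{wp_thm}, but at the regularity level $L^2(-1,1)\times\Hs{-1}$ (one level below that of Theorem \ref{wp_thm}), one gets for every $(p^0,p^1,q^0)\in L^2(-1,1)\times\Hs{-1}\times\Hs{1}$ a unique solution $p\in C([0,T];L^2(-1,1))\cap C^1([0,T];\Hs{-1})$, for which $p(0,\cdot)\in L^2(-1,1)$, $p_t(0,\cdot)\in\Hs{-1}$ and all the terms in \eqref{control_id} are meaningful.

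The heart of the argument is the following \emph{transposition identity}: for every $(y^0,y^1)\in\Hs{1}\times L^2(-1,1)$, every $u\in L^2(\mathcal O)$ with associated solution $y$ of \eqref{wave_mem}, and every $(p^0,p^1,q^0)\in L^2(-1,1)\times\Hs{-1}\times\Hs{1}$ with associated solution $p$ of \eqref{wave_mem_adj},
\begin{align*}
\int_0^T\!\int_{\omega(t)}u\,\bar p\,dx\,dt
&=\big\langle y^0,p_t(0,\cdot)\big\rangle_{\hs{1}(-1,1),\hs{-1}(-1,1)}-\int_{-1}^1 y^1(x)\,\bar p(0,x)\,dx
\\
&\quad+\int_{-1}^1 y_t(T,x)\,\bar p^0(x)\,dx-\big\langle y(T,\cdot),\bar p^1\big\rangle_{\hs{1}(-1,1),\hs{-1}(-1,1)}
\\
&\quad+M\int_{-1}^1\left(\int_0^T\ffl{s}{y}(\tau,x)\,d\tau\right)\bar q^0(x)\,dx.
\end{align*}
To prove it, I would multiply the first line of \eqref{wave_mem} by $\bar p$, integrate over $Q$, integrate by parts twice in $t$ (using $y(0,\cdot)=y^0$, $y_t(0,\cdot)=y^1$, $p(T,\cdot)=p^0$, $p_t(T,\cdot)=p^1$), use the self-adjointness of $(-d_x^{\,2})^s_D$ — legitimate because $y$ and $p$ vanish in $(-1,1)^c$ and the form $\mathcal E$ is symmetric — and apply Fubini's theorem to turn $\int_0^T\!\int_0^t\ffl{s}{y}(\tau)\,\bar p(t)\,d\tau\,dt$ into $\int_0^T\ffl{s}{y}(\tau)\int_\tau^T\bar p(t)\,dt\,d\tau$. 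In the remaining interior integral $y(t)$ multiplies $\bar p_{tt}(t)+\ffl{s}{\bar p}(t)-M\int_t^T\ffl{s}{\bar p}(\tau)\,d\tau$, which by \eqref{wave_mem_adj} equals $M\ffl{s}{\bar q^0}$; one more use of self-adjointness reduces this to the last term above. The computation is first done for smooth data and then extended to the stated classes by density, using \eqref{norm_est} and its adjoint analogue to pass to the limit term by term.

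Granting the transposition identity, the equivalence follows at once. If (i) holds, choose for each $(y^0,y^1)$ a control $u$ with $y(T,\cdot)=y_t(T,\cdot)=z(T,\cdot)=0$; the last three terms in the identity vanish and we recover \eqref{control_id} for all $(p^0,p^1,q^0)$, i.e.\ (ii). Conversely, if (ii) holds, subtract \eqref{control_id} from the transposition identity to obtain
\begin{align*}
\int_{-1}^1 y_t(T,x)\,\bar p^0(x)\,dx-\big\langle y(T,\cdot),\bar p^1\big\rangle_{\hs{1}(-1,1),\hs{-1}(-1,1)}+M\int_{-1}^1\left(\int_0^T\ffl{s}{y}(\tau,x)\,d\tau\right)\bar q^0(x)\,dx=0
\end{align*}
for all $(p^0,p^1,q^0)\in L^2(-1,1)\times\Hs{-1}\times\Hs{1}$. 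Taking $p^1=q^0=0$ with $p^0$ arbitrary gives $y_t(T,\cdot)=0$ in $L^2(-1,1)$; taking $p^0=q^0=0$ with $p^1$ arbitrary gives $y(T,\cdot)=0$ in $\Hs{1}$ (since $\Hs{-1}=(\Hs{1})'$); and taking $p^0=p^1=0$ with $q^0$ arbitrary, together with $M\neq 0$ (recall the case $M=0$ is excluded here) and the density of $\Hs{1}$ in $L^2(-1,1)$, gives $\int_0^T\ffl{s}{y}(\tau,\cdot)\,d\tau=0$. This is exactly \eqref{mem_control}, so (i) holds.

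The one genuinely delicate point is making the transposition step rigorous at the low regularity level: one must read the solution $p$ of \eqref{wave_mem_adj}, and the transposition identity above, in the transposition sense when $(p^0,p^1)$ lies only in $L^2(-1,1)\times\Hs{-1}$, and verify that the two integrations by parts in $t$ and the repeated use of the self-adjointness of $(-d_x^{\,2})^s_D$ are valid as identities between the relevant duality pairings rather than between genuine integrals. No boundary trace terms in $x$ appear, because the homogeneous exterior Dirichlet condition is built into the spaces $\Hs{\sigma}$ and $\mathcal E$ is symmetric; all other steps are routine density and continuity arguments based on Theorem \ref{wp_thm}.
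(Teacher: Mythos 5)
Your proposal is correct and follows essentially the same route as the paper: multiply \eqref{wave_mem} by $\bar p$, integrate by parts in $t$, use Fubini and the adjoint equation to absorb the memory term, obtain the transposition identity (the paper's \eqref{control_id_prel}, up to an immaterial sign on the $M$-term), and then read off the equivalence by choosing $(p^0,p^1,q^0)$ one component at a time. Your added remarks on the well-posedness of \eqref{wave_mem_adj} and on the density/duality justification are consistent with what the paper defers to Lemma \ref{sol_adj_lemma}.
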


\begin{proof}
Firstly, we multiply \eqref{wave_mem} by $\bar{p}$ and we integrate by parts over $Q$. Taking into account the exterior condition, we get
\begin{align*}
    \int_0^T\int_{\omega(t)} u(t,x)\bar{p}(t,x)\,dxdt =&\, \int_Q\left(y_{tt}(t,x)+\ffl{s}{y}(t,x) - M\int_0^t \ffl{s}{y}(\tau,x)\,d\tau\right)\bar{p}(t,x)\,dxdt
    \\
    =&\,\int_{-1}^1 \Big(y_t(t,x)\bar{p}(t,x)-y(t,x)\bar{p}_t(t,x)\Big)\Big|_0^T\,dx 
    \\
    &+ \int_Q y(t,x)\Big(\bar{p}_{tt}(t,x)+\ffl{s}{\bar{p}}(t,x)\Big)\,dxdt
    \\
    &- M\int_0^T\int_{-1}^1\left(\int_0^t \ffl{s}{y}(\tau,x)\,d\tau\right)\bar{p}(t,x)\,dxdt.
\end{align*}

Secondly,  we have that
\begin{align*}
    M&\int_0^T\int_{-1}^1 \left(\int_0^t \ffl{s}{y}(\tau,x)\,d\tau\right)\bar{p}(t,x)\,dxdt
    \\
    &= -M\int_0^T\int_{-1}^1\left(\int_0^t \!\ffl{s}{y}(\tau,x)\,d\tau\right)\frac{d}{dt}\left(\int_t^T\! \bar{p}(\tau,x)\,d\tau + \bar{q}^{\,0}(x)\right)\,dxdt
    \\
    &= -M\!\int_{-1}^1\! \left(\int_0^T\!\! \ffl{s}{y}(\tau,x)\,d\tau\!\right)\!\bar{q}^{\,0}(x)\,dx + M\!\int_Q y(t,x)\!\left(\int_t^T\!\! \ffl{s}{\bar{p}}(\tau,x)\,d\tau + \ffl{s}{\bar{q}^{\,0}}(x)\!\right)\,dxdt.
\end{align*}

Thirdly, taking into account the regularity properties of the solution $y$, we obtain
\begin{align}\label{control_id_prel}
	\int_0^T \int_{\omega(t)} u(t,x)\bar{p}(t,x)\,dxdt =&\,\int_{-1}^1 y_t(T,x)\bar{p}^{\,0}(x)\,dx -\big\langle y(T,\cdot),p^1(\cdot)\big\rangle_{\hs{1}(-1,1),\hs{-1}(-1,1)} \notag 
	\\
	& - \int_{-1}^1 y^1(x)\bar{p}(0,x)\,dx + \big\langle y^0(\cdot),p_t(0,\cdot)\big\rangle_{\hs{1}(-1,1),\hs{-1}(-1,1)} \notag
	\\
	&- M\int_0^T \left\langle \ffl{s}{y}(t,\cdot),q^0(\cdot)\right\rangle_{\hs{-1}(-1,1),\hs{1}(-1,1)}\,dt.
\end{align}

Now assume that (i) holds. Then \eqref{control_id} follows from \eqref{control_id_prel} and \eqref{mem_control}. We have shown that (i) implies (ii). Next assume that \eqref{control_id} holds. Then, \eqref{control_id_prel} can be rewritten as
\begin{align*}
    \int_{-1}^1 y_t(T,x)\bar{p}^{\,0}(x)\,dx -\big\langle y(T,\cdot),p^1(\cdot)\big\rangle_{\hs{1}(-1,1),\hs{-1}(-1,1)} -M\int_0^T \left\langle \ffl{s}{y}(t,\cdot),q^0(\cdot)\right\rangle_{\hs{-1}(-1,1),\hs{1}(-1,1)}\,dt=0.
\end{align*}
Since the above identity is verified  for all $(p^0,p^1,q^0)\in L^2(-1,1)\times\Hs{-1}\times  \Hs{1}$, we can immediately deduce that \eqref{mem_control} holds.  We have shown that (ii) implies (i) and the proof is finished.
\end{proof}

Let us notice that we shall prove in Lemma \ref{sol_adj_lemma} that  the system \eqref{wave_mem_adj} has a solution in the energy space associated with the initial data. 

We also observe that the systems \eqref{wave_mem} and \eqref{wave_mem_adj} can be rewritten as the following systems coupling a PDE and an ODE. More precisely, \eqref{wave_mem} and \eqref{wave_mem_adj} are equivalent to

\begin{align}\label{wave_mem_syst}
    \begin{cases}
        y_{tt}(t,x) +\ffl{s}{y}(t,x) - Mz(t,x) = \mathbf{1}_{\omega(t)}u(t,x) &(t,x)\in Q,
        \\
        z_t(t,x) = \ffl{s}{y}(t,x) &(t,x)\in Q,
        \\
        y(t,x)=0 &(t,x)\in Q^c
        \\
        y(0,x) = y^0(x),\;\; y_t(0,x) = y^1(x) & x\in(-1,1),
        \\
        z(0,x) = 0 & x\in(-1,1),
    \end{cases}
\end{align}
and
\begin{align}\label{wave_mem_adj_syst}
    \begin{cases}
        p_{tt}(t,x) +\ffl{s}{p}(t,x) - M\ffl{s}{q}(t,x) = 0 &(t,x)\in Q,
        \\
        -q_t(t,x) = p(t,x) &(t,x)\in Q,
        \\
        p(t,x)=0&(t,x)\in Q^c,
        \\
        p(T,x) = p^0(x),\;\; y_t(T,x) = p^1(x), & x\in(-1,1),
        \\
        q(T,x) = q^0(x) & x\in(-1,1),
    \end{cases}
\end{align}
respectively.

In order to prove our controllability result, we  introduce the following change of variables:

\begin{align}\label{cv}
	x\mapsto x':=x+ct ,
\end{align}
where the parameter $c$ is a constant velocity which belongs to $\mathbb{R}\setminus\{-\gamma,0,\gamma\}$ and $\gamma>0$ is a suitable given constant that we shall specify later. Let

\begin{align*}
	\mathcal I:=(-1+ct,1+ct),\;\;\;\mathcal Q:=\mathcal I\times(0,T),\;\;\; \mathcal Q^c:=\mathcal I^c\times(0,T),
\end{align*}
and
\begin{align*}
	\xi(t,x'):=y(t,x),\;\;\; \zeta(t,x'):=z(t,x),\;\;\; \varphi(t,x'):=p(t,x),\;\;\; \psi(t,x'):=q(t,x).
\end{align*}

A simple computation shows that, from \eqref{wave_mem_syst}, \eqref{wave_mem_adj_syst},  \eqref{cv} and after having denoted $x'=x$ (with some abuse of notation), we obtain 
\begin{align}\label{wave_mem_syst_cv}
    \begin{cases}
        \xi_{tt}(t,x) + c^2\xi_{xx}(t,x) + \ffl{s}{\xi}(t,x) + 2c\xi_{xt}(t,x) - M\zeta(t,x) = \mathbf{1}_{\omega_0}\tilde{u}(t,x) &(t,x)\in\Q,
        \\
        \zeta_t(t,x) + c\zeta_x(t,x) = \ffl{s}{\xi}(t,x) &(t,x)\in\Q,
        \\
        \xi(t,x)=0 & (t,x)\in\Q^c,
        \\
        \xi(0,x) = y^0(x),\;\; \xi_t(0,x) = y^1(x)-cy^0_x(x) & x\in\I,
        \\
        \zeta(0,x) = 0 & x\in\I,
\end{cases}
\end{align}
and
\begin{align}\label{wave_mem_adj_syst_cv}
    \begin{cases}
        \varphi_{tt}(t,x) +c^2\varphi_{xx}(t,x) + \ffl{s}{\varphi}(t,x) + 2c\varphi_{xt}(t,x) - M\ffl{s}{\psi}(t,x) = 0 &(t,x)\in\Q,
        \\
        -\psi_t(t,x) - c\psi_x(t,x) = \varphi(t,x) &(t,x)\in\Q,
        \\
        \varphi(t,x)=0 & (t,x)\in\Q^c,
        \\
        \varphi(T,x) = p^0(x),\;\; \varphi_t(T,x) = p^1(x)-cp^0_x(x) & x\in\I,
        \\
        \psi(T,x) = q^0(x) & x\in\I,
    \end{cases}
\end{align}
respectively.

We remark that, after the above change of variables, our new systems are defined in a domain which is moving with constant velocity $c$. On the other hand, in this moving framework the control region is now fixed (that is, it does not depend on the time variable). 

\begin{rem}
{\em 
Contrary to the case of the Laplace operator ($s=1$) investigated in \cite{biccari2018null}, in the case of the fractional Laplacian studied here, after using the change of variable \eqref{cv}, the new obtained systems \eqref{wave_mem_syst_cv} and \eqref{wave_mem_adj_syst_cv} are of higher order in the space variable $x$. More precisely, we started with the fractional Laplace operator and after the change of variable our new systems include the Laplace operator. This will require more regularity on the initial data in order to obtain some well-posedness results. For this this reason, we need to introduce some new function spaces.  }
\end{rem}

Let $(-d_x^2)_D$ be the realization in $L^2(\I)$ of the Laplace operator $-d_x^2$ with the zero Dirichlet boundary condition. It is well-known that $D((-d_x^2)_D)=H^2(\I)\cap H_0^1(\I)\subset\mathbb H_s^1(\I)=D((-d_x^2)_D^s)$. For $\sigma\ge 0$, we denote by $\mathbb H_1^\sigma (\I)$ the domain of the $\sigma$-power of $(-d_x^2)_D$, that is, $D((-d_x^2)_D^\sigma)$. We shall also denote by $\mathbb H_1^{-\sigma }(\I)$ the dual of $\mathbb H_1^\sigma (\I)$ with respect to the pivot space $L^2(\I)$. Then it is easy to see that $\mathbb H_1^\sigma (\I)\subseteq \mathbb H_s^\sigma (\I)$ for every $\sigma\ge 0$, where we recall that $ \mathbb H_s^\sigma (\I)$ has been introduced in \eqref{Hsigma}. Hence, we have the following continuous embeddings:

\begin{align}\label{Cont-Emb}
\mathbb H_1^\sigma (\I)\hookrightarrow \mathbb H_s^\sigma (\I)\hookrightarrow L^2(\I)\hookrightarrow \mathbb H_s^{-\sigma}(\I)\hookrightarrow \mathbb H_1^{-\sigma}(\I).
\end{align}

Next, in analogy to Lemma \ref{control_id_lemma} we have the following characterization of our control problem.

\begin{lem}\label{control_id_lemma_syst}
Let $0<s<1$. Then the following assertions are equivalent.
\begin{enumerate}
\item [(i)] The system \eqref{wave_mem_syst_cv} is memory-type null controllable in time $T$.

\item [(ii)] For each initial data $(y^0,y^1)\in\mathbb H_s^1(\I)\times L^2(\I)$, there exits a control $\tilde{u}\in L^2((0,T)\times \omega_0)$ such that
\begin{align}\label{control_id_cv}
    \int_0^T\int_{\omega_0} u(t,x)\bar{\varphi}(t,x)\,dxdt= \big\langle y^0(\cdot),\varphi_t(0,\cdot)\big\rangle_{\mathbb H_s^1(\I),\mathbb H_s^{-1}(\I)} - \int_{\I} \big(y^1(x)+cy^0_x(x)\big)\bar{\varphi}(0,x)\,dx,
\end{align}
for any $(p^0,p^1,q^0)\in L^2(\I)\times\mathbb H_s^{-1}(\I)\times \mathbb H_s^1(\I)$, where $(\varphi,\psi)$ is the unique solution to \eqref{wave_mem_adj_syst_cv}.
\end{enumerate}
\end{lem}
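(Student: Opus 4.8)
The plan is to mimic verbatim the computation carried out in the proof of Lemma \ref{control_id_lemma}, but now performed in the moving frame. The key observation is that the change of variables \eqref{cv} is measure-preserving in the time slices (it is simply a translation $x\mapsto x+ct$ of the interval $(-1,1)$ onto $\mathcal I$), so integration by parts in $x$ over $\mathcal I$ produces no boundary contributions because $\xi$, $\varphi$ vanish on $\mathcal I^c$, exactly as $y$, $p$ vanished on $(-1,1)^c$. Consequently the bilinear duality pairings for the fractional Laplacian are preserved under the transformation, and the identity \eqref{control_id_cv} is nothing but \eqref{control_id} rewritten in the new coordinates, with the only genuine difference being the initial velocity, which becomes $y^1 - cy^0_x$ for $\xi$ (see the fourth line of \eqref{wave_mem_syst_cv}) and, dually, $p^1 - cp^0_x$ for $\varphi$.

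\textbf{Step 1.} Assuming (i), i.e. that the transformed solution $(\xi,\zeta)$ of \eqref{wave_mem_syst_cv} is driven so that $\xi(T,\cdot)=\xi_t(T,\cdot)=0$ and $\int_0^T(-d_x^2)^s\xi(\tau,\cdot)\,d\tau = 0$, I would multiply the first equation of \eqref{wave_mem_syst_cv} by $\bar\varphi$, integrate over $\mathcal Q$, and integrate by parts in both $t$ and $x$. The terms $c^2\xi_{xx}+2c\xi_{xt}$ and the corresponding $c^2\varphi_{xx}+2c\varphi_{xt}$ in the adjoint equation \eqref{wave_mem_adj_syst_cv} are precisely the ones generated by applying $\partial_t \to \partial_t - c\partial_x$ to a standard wave operator; since both systems carry the same first-order drift, these extra terms cancel in the weak formulation up to boundary contributions in $x$, which vanish. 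The memory/ODE coupling $-M\zeta$ paired against $\bar\varphi$ is handled exactly as in Lemma \ref{control_id_lemma}: one writes $\zeta(t,x)=\int_0^t(-d_x^2)^s\xi(\tau,x)\,d\tau$ (from the ODE with $\zeta(0,\cdot)=0$, integrated along characteristics — here again using that $\psi$ solves $-\psi_t-c\psi_x=\varphi$), integrates by parts in $t$ against $\psi$, and uses $\psi(T,\cdot)=q^0$ together with the terminal/initial data in \eqref{wave_mem_adj_syst_cv}. This yields the analogue of \eqref{control_id_prel}:
\begin{align*}
\int_0^T\!\!\int_{\omega_0}\! u\bar\varphi\,dxdt &= \int_{\I} \xi_t(T,x)\bar p^0(x)\,dx - \big\langle \xi(T,\cdot),p^1(\cdot)\big\rangle_{\mathbb H_s^1(\I),\mathbb H_s^{-1}(\I)} - \int_{\I}\big(y^1+cy^0_x\big)\bar\varphi(0,x)\,dx \\
&\quad + \big\langle y^0(\cdot),\varphi_t(0,\cdot)\big\rangle_{\mathbb H_s^1(\I),\mathbb H_s^{-1}(\I)} - M\!\int_0^T\!\Big\langle (-d_x^2)^s\xi(t,\cdot),q^0(\cdot)\Big\rangle_{\mathbb H_s^{-1}(\I),\mathbb H_s^1(\I)}\,dt.
\end{align*}
Substituting \eqref{mem_control} (in the moving frame) kills the first, second and last terms, giving \eqref{control_id_cv}; so (i) $\Rightarrow$ (ii).

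\textbf{Step 2.} For the converse, assume \eqref{control_id_cv} holds for every $(p^0,p^1,q^0)\in L^2(\I)\times\mathbb H_s^{-1}(\I)\times\mathbb H_s^1(\I)$. Subtracting it from the general identity above leaves
\begin{align*}
\int_{\I}\xi_t(T,x)\bar p^0(x)\,dx - \big\langle \xi(T,\cdot),p^1(\cdot)\big\rangle_{\mathbb H_s^1(\I),\mathbb H_s^{-1}(\I)} - M\!\int_0^T\!\Big\langle(-d_x^2)^s\xi(t,\cdot),q^0(\cdot)\Big\rangle_{\mathbb H_s^{-1}(\I),\mathbb H_s^1(\I)}\,dt = 0
\end{align*}
for all admissible $(p^0,p^1,q^0)$. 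Varying $p^0$ alone gives $\xi_t(T,\cdot)=0$ in $L^2(\I)$; varying $p^1$ alone gives $\xi(T,\cdot)=0$ in $\mathbb H_s^1(\I)$; varying $q^0$ alone gives $\int_0^T(-d_x^2)^s\xi(\tau,\cdot)\,d\tau=0$ in $\mathbb H_s^{-1}(\I)$. These three conditions are exactly \eqref{mem_control} transported to the moving frame, hence (ii) $\Rightarrow$ (i). Here I would use Lemma \ref{sol_adj_lemma} (forward-referenced in the text) to guarantee that \eqref{wave_mem_adj_syst_cv} admits a solution $(\varphi,\psi)$ in the energy space, so that all the duality pairings above are well defined — this is the one nontrivial input, and the extra regularity of the initial data flagged in the Remark is what makes those pairings legitimate given that the transformed system is second order in $x$.

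\textbf{Main obstacle.} The only delicate point is bookkeeping: making sure that the second-order-in-$x$ terms $c^2\xi_{xx}+2c\xi_{xt}$ genuinely cancel against their adjoint counterparts with no residual boundary terms, and that the regularity assumed on $(y^0,y^1)$ (namely the one guaranteeing $\xi\in C([0,T];\mathbb H_s^1(\I))\cap C^1([0,T];L^2(\I))$ together with the spatial traces needed for integration by parts) is enough to justify every integration by parts and every pairing. Since this is a direct transcription of Lemma \ref{control_id_lemma} via the translation \eqref{cv}, no new analytic difficulty arises beyond invoking the well-posedness of the transformed adjoint system from Lemma \ref{sol_adj_lemma}; the proof is therefore essentially a change-of-variables verification and can be kept short, stating that it is "entirely analogous to the proof of Lemma \ref{control_id_lemma}" and indicating only the modifications due to the drift term and the shifted initial velocity.
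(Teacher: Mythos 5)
Your proposal is correct and follows exactly the route the paper intends: the paper omits this proof entirely, stating only that it "follows the lines of the proof of Lemma \ref{control_id_lemma}", and your argument is precisely that transcription — derive the duality identity in the moving frame, then read off the two implications by linearity in $(p^0,p^1,q^0)$, with the only new bookkeeping being the drift terms $c^2\xi_{xx}+2c\xi_{xt}$ and the boundary contribution at $t=0$ from the cross term, which converts the transformed initial velocity $y^1-cy^0_x$ into the $y^1+cy^0_x$ appearing in \eqref{control_id_cv}.
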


\begin{proof}
The proof follows the lines of the proof  of Lemma \ref{control_id_lemma}. We omit it for brevity.
\end{proof}

\section{Analysis of the spectrum of the operator associated to the systems}\label{spectrum_sect}

In this section we make the spectral analysis of the operators associated to our adjoint systems. This analysis will be fundamental in the proof of the controllability result. 


\subsection{Spectral analysis of the operator associated with the system \eqref{wave_mem_adj_syst}}

We observe that the system \eqref{wave_mem_adj_syst} can be rewritten as  the following first order Cauchy problem

\begin{align}\label{eq:memsys}
    \begin{cases}
        Y'(t) + \mathcal A Y = 0 & t\in(0,T),
        \\
        Y(0)=Y^0,
    \end{cases}
\end{align}
where $Y=\left(\begin{array}{c}
        p
        \\
        r
        \\
        q\end{array}\right)$,
$Y^0=\left(\begin{array}{c}
    p^0
    \\
    r^0
    \\
    q^0\end{array}\right)$ and the unbounded operator ${\mathcal A}: D({\mathcal A})\rightarrow
X_{-\sigma}$ is defined by
\begin{align*}
\mathcal A\left(\begin{array}{c}
    p
    \\
    r
    \\
    q\end{array}\right)=\left(\begin{array}{c}
        -r
        \\
        \ffl{s}{p}-M\ffl{s}{q}
        \\
        p\end{array}\right).
\end{align*}
The spaces $X_{-\sigma}$ (for $\sigma\ge 0)$ and $D({\mathcal A})$ are given  by
\begin{align}\label{eq:xsmi}
	&X_{-\sigma}=\Hs{-\sg}\times \Hs{-\sg-1}\times \Hs{-\sg},
	\\ \label{eq:dsmi}
	&D({\mathcal A})=\Hs{-\sg+1}\times \Hs{-\sg}\times \Hs{-\sg+1}.
\end{align}

\begin{thm}\label{eigen_mu_thm}
Let $0<s<1$ and $\sigma\ge 0$.
The spectrum of the operator $(D({\mathcal A}),\mathcal A)$ is given by
\begin{align}\label{eq:spec}
    \operatorname{Spec}(\mathcal A)=\left(\mu_n^j\right)_{n\geq 1,\, 1\leq j\leq 3}\cup\{M\},
\end{align}
where the eigenvalues $\mu_n^j$ verify the following:
\begin{align}\label{eq:eig}
    \begin{cases}
        \displaystyle \mu_n^1 = M - \frac{M^3}{\rho_n} + \mathcal O\left(\frac{1}{n^4}\right) & n\geq 1,
        \\[15pt]
        \displaystyle \mu_n^2 = -\frac{\mu_n^1}{2} + i\,\sqrt{3\left(\frac{\mu_n^1}{2}\right)^2+\rho_n} & n\geq 1,
        \\[15pt]
        \displaystyle \mu_n^3 = \bar{\mu}^2_n & n\geq 1.
    \end{cases}
\end{align}
Each eigenvalue $\mu_n^j\in \operatorname{Spec}({\mathcal A})$ is double and has two associated eigenvectors given by
\begin{align}\label{eq:eigf}
    \Phi_{\pm n}^j = \left(\begin{array}{c}
        1
        \\
        -\mu_n^j
        \\ [5pt]
        \displaystyle\frac{1}{\mu_n^j}\end{array}\right) e^{\pm i\rho_n^{\frac{1}{2s}}x}, \qquad j\in\{1,2,3\},\,\, n\geq 1.
    \end{align}
\end{thm}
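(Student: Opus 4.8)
The plan is to diagonalize $\mathcal A$ by exploiting the fact that the fractional Laplacian acts on the spatial variable through the basis $\{e^{\pm i\rho_n^{1/(2s)}x}\}$, which by \eqref{fl_eigen} and the definition of $\rho_n$ diagonalizes $(-d_x^2)^s$ with eigenvalue $\rho_n$ on each such exponential (after the change of variable the domain $\I$ is an interval of length $2$, so these are precisely the eigenfunctions once extended by zero outside). First I would plug the ansatz $\Phi = (p,r,q)^{\mathsf T} e^{\pm i\rho_n^{1/(2s)}x}$ into the eigenvalue equation $\mathcal A\Phi = \mu\Phi$; using $(-d_x^2)^s e^{\pm i\rho_n^{1/(2s)}x} = \rho_n e^{\pm i\rho_n^{1/(2s)}x}$, this reduces to the $3\times3$ algebraic system
\begin{align*}
    -r = \mu p, \qquad \rho_n p - M\rho_n q = \mu r, \qquad p = \mu q.
\end{align*}
Eliminating $r = -\mu p$ and $q = p/\mu$ (for $\mu\neq 0$) gives the characteristic equation
\begin{align*}
    \mu^3 - M\mu^2 + \rho_n\mu - M\rho_n = (\mu - M)(\mu^2 + \rho_n) + \text{correction} = 0,
\end{align*}
more precisely $\mu^3 + \rho_n\mu = M(\mu^2 + \rho_n)$, i.e. $\mu(\mu^2+\rho_n) = M(\mu^2+\rho_n) - M\rho_n + \rho_n\mu$; I would simply expand $\det(\mathcal A - \mu) = -(\mu^3 - M\mu^2 + \rho_n\mu + M\rho_n)$ and analyze its three roots $\mu_n^1,\mu_n^2,\mu_n^3$.

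The next step is the asymptotic expansion. For fixed $M$ and $\rho_n\to\infty$, the cubic $\mu^3 - M\mu^2 + \rho_n\mu - M\rho_n = 0$ has one real root near $M$ and two roots of size $\sim\sqrt{\rho_n}$. Writing $\mu_n^1 = M + \delta_n$ and substituting, one solves perturbatively: at leading order $\rho_n\delta_n \approx$ (a constant), yielding $\delta_n = -M^3/\rho_n + O(\rho_n^{-2}) = -M^3/\rho_n + O(n^{-4})$ once one invokes the Weyl-type asymptotics $\rho_n \sim (n\pi/2)^{2s}$, hence $\rho_n^{-2} = O(n^{-4s})$ — here I would need $s$ close enough to $1$, or simply state the bound as $O(\rho_n^{-2})$ and convert using Lemma \ref{lemm}; this is a point to handle carefully. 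For the other two roots: since $\mu_n^1 + \mu_n^2 + \mu_n^3 = M$ (sum of roots) and $\mu_n^2\mu_n^3 + \mu_n^1(\mu_n^2+\mu_n^3) = \rho_n$ (second symmetric function), I get $\mu_n^2 + \mu_n^3 = M - \mu_n^1$ and $\mu_n^2\mu_n^3 = \rho_n - \mu_n^1(M-\mu_n^1)$. Therefore $\mu_n^{2,3}$ are the roots of $\lambda^2 - (M-\mu_n^1)\lambda + (\rho_n - \mu_n^1(M-\mu_n^1)) = 0$; completing the square and using $M - \mu_n^1 = -\mu_n^1 + M$ — wait, I should write $M-\mu_n^1$ consistently, but the claimed formula uses $-\mu_n^1/2 \pm i\sqrt{\cdots}$, which corresponds to $\mu_n^2+\mu_n^3 = -\mu_n^1$; this forces the bookkeeping $M - \mu_n^1 \equiv -\mu_n^1$ modulo the convention that the full sum of all three roots equals $M$, so in fact $\mu_n^2+\mu_n^3 = M-\mu_n^1$ and the stated real part should be $(M-\mu_n^1)/2$; I would double-check the sign conventions in \eqref{eq:eig} against the paper's later usage but otherwise the discriminant computation is routine: $\Delta = (M-\mu_n^1)^2 - 4\rho_n + 4\mu_n^1(M-\mu_n^1) < 0$ for large $n$, giving the purely imaginary part $i\sqrt{\rho_n + 3(\mu_n^1/2)^2 + \cdots}$ after simplification, and $\mu_n^3 = \overline{\mu_n^2}$ since the cubic has real coefficients and $\mu_n^1$ is real.

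For the eigenvectors: once $\mu = \mu_n^j \neq 0$ is a root, the null space of $\mathcal A - \mu$ restricted to the $e^{\pm i\rho_n^{1/(2s)}x}$-sector is one-dimensional, spanned by $(p,r,q) = (1, -\mu_n^j, 1/\mu_n^j)$ by the relations above, giving exactly \eqref{eq:eigf}, with the two signs $\pm n$ corresponding to the two exponentials — hence each $\mu_n^j$ is (at least) double. To see the spectrum is \emph{exactly} $\{\mu_n^j\}\cup\{M\}$ with no other points, I would argue that $\mathcal A$ has compact resolvent on $X_{-\sigma}$ (since each component space embeds compactly into the next by \eqref{Cont-Emb} and the resolvent of the fractional Laplacian is compact), so the spectrum is pure point; expanding any eigenfunction in the orthonormal basis $(e_n)$ reduces the eigenvalue problem blockwise to the $3\times3$ systems above, whose eigenvalues are exactly the $\mu_n^j$, \emph{except} when $\mu = 0$ — the case $\mu = 0$ (where the elimination $q = p/\mu$ fails) must be examined separately and yields $p = 0$, $r = 0$, $q$ arbitrary only if $M(-d_x^2)^s q = 0$, forcing $q=0$; whereas $\mu = M$ appears as a genuine spectral value from a Jordan-type obstruction or from the $\rho_n\to$ accumulation — I would pin down $\{M\}$ as the limit point of $(\mu_n^1)$, which lies in the essential spectrum closure, and verify it is not an eigenvalue but belongs to $\operatorname{Spec}(\mathcal A)$ via a Weyl sequence built from $\Phi_n^1$. \textbf{The main obstacle} is precisely this last point: rigorously establishing that $M$ belongs to the spectrum (and characterizing its nature) and ruling out any spurious spectrum off the list, together with the careful perturbative bookkeeping to get the error term $O(n^{-4})$ from $O(\rho_n^{-2})$, which relies on the eigenvalue asymptotics in Lemma \ref{lemm} and the range $\tfrac12 < s < 1$.
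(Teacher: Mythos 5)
Your overall strategy matches the paper's: use the exponential ansatz $e^{\pm i\rho_n^{1/(2s)}x}$ together with the identity $(-d_x^2)^s e^{i\kappa x}=|\kappa|^{2s}e^{i\kappa x}$ to reduce the eigenvalue problem blockwise to a $3\times 3$ algebraic system, derive a cubic in $\mu$ with $\rho_n$ as parameter, and read off the three families of roots and their eigenvectors. However, there is a genuine algebraic error that then propagates and derails your sign analysis. From the system
\begin{align*}
-r=\mu p,\qquad \rho_n p - M\rho_n q=\mu r,\qquad p=\mu q,
\end{align*}
substituting $r=-\mu p$, $q=p/\mu$ into the middle equation yields $\rho_n p - M\rho_n p/\mu=-\mu^2 p$, and multiplying by $\mu$ gives
\begin{align*}
\mu^3+\rho_n\mu-M\rho_n=0,
\end{align*}
which is the paper's $K(\mu)=0$ and has \emph{no} $\mu^2$ term. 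You wrote instead $\mu^3-M\mu^2+\rho_n\mu-M\rho_n=0$; note that this factors as $(\mu-M)(\mu^2+\rho_n)=0$, so it would give $\mu_n^1=M$ \emph{exactly}, contradicting the expansion $\mu_n^1=M-M^3/\rho_n+\cdots$ you try to establish. The spurious $-M\mu^2$ term also gives the wrong Vieta relation $\mu_n^1+\mu_n^2+\mu_n^3=M$; the correct one (coefficient of $\mu^2$ is zero) is $\mu_n^1+\mu_n^2+\mu_n^3=0$, hence $\mu_n^2+\mu_n^3=-\mu_n^1$ and $\operatorname{Re}(\mu_n^2)=-\mu_n^1/2$, exactly as in \eqref{eq:eig}. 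The ``inconsistency'' you flagged and attributed to a possible sign convention in the paper is actually an artifact of your own miscomputed cubic.

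A second, smaller but still substantive issue: you invoke compactness of the resolvent of $\mathcal A$ to conclude the spectrum is pure point. That cannot be right, because you (and the paper) then show $\mu_n^1\to M$, i.e. the eigenvalues accumulate at a finite point, which is incompatible with compact resolvent. The third component of $\mathcal A$, $(p,r,q)\mapsto p$, is essentially the identity and destroys compactness; this is precisely why $M$ appears in the essential spectrum. The paper simply observes that $M$ is the accumulation point of $(\mu_n^1)_n$ and therefore lies in $\operatorname{Spec}(\mathcal A)$; a Weyl sequence built from the $\Phi^1_{\pm n}$ is the right tool if you want to make that rigorous, but not ``compact resolvent.'' Finally, your worry about the error term is reasonable: the perturbative expansion of the correct cubic gives $\mu_n^1=M-M^3/\rho_n+O(\rho_n^{-2})$, and since $\rho_n\sim (\gamma n)^{2s}$ this is $O(n^{-4s})$, which for $\tfrac12<s<1$ is weaker than the $O(n^{-4})$ written in \eqref{eq:eig}; stating the bound as $O(\rho_n^{-2})$ is the cleaner and defensible form.
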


\begin{proof}
We prove the result in several steps.

{\bf Step 1}. From the equality
\begin{align*}
    \mathcal A\left(\begin{array}{c}
        \phi^1
        \\
        \phi^2
        \\
        \phi^3
    \end{array}\right) = \left(\begin{array}{c}
        -\phi^2
        \\
        \ffl{s}{\phi^1} - M\ffl{s}{\phi^3}
        \\
        \phi^1
    \end{array}\right) =\mu\left(\begin{array}{c}
        \phi^1
        \\
        \phi^2
        \\
        \phi^3
    \end{array}\right),
\end{align*}
we can deduce that
\begin{align*}
    \begin{cases}
        \phi^2 = -\mu\phi^1,
        \\
        \phi^3 = \mu^{-1}\phi^1,
        \\
        (\mu-M)\ffl{s}{\phi^1} = -\mu^3\phi^1.
    \end{cases}
\end{align*}
Plugging the first two equations from the above system in the third one, we immediately get
\begin{align*}
    \ffl{s}{\phi^1} = \left(-\frac{\mu^3}{\mu-M}\right)\phi^1.
\end{align*}
Consequently, $\mu$ is an eigenvalue of  $\mathcal A$  if and only if it verifies the characteristic equation
\begin{align}\label{eq:lambda}
	K(\mu):=\mu^3+\rho_n\mu-M\rho_n = 0, \quad n\geq 1.
\end{align}

{\bf Step 2}. We claim that for any $\kappa\in\RR$, 
\begin{align}\label{claim}
\ffl{s}{e^{i\kappa x}}=|\kappa|^{2s}e^{i\kappa x}.
\end{align}
Indeed, by definition of the fractional Laplacian (see \eqref{fl}) we have
\begin{align*}
	\ffl{s}{e^{i\kappa x}} = C_s\, \mbox{P.V.}\,\int_{\RR} \frac{e^{i\kappa x}-e^{i\kappa y}}{|x-y|^{1+2s}}\,dy = C_s\, e^{i\kappa x} \mbox{P.V.}\,\int_{\RR} \frac{1-e^{i\kappa(y-x)}}{|x-y|^{1+2s}}\,dy.
\end{align*}

Now, applying the change of variables $z=\kappa(y-x)$, using the definition of principal value and the expression for the constant $C_s$ given in \cite[Section 3]{dihitchhiker}, we get 

\begin{align*}
	\ffl{s}{e^{i\kappa x}} &= C_s|\kappa|^{2s}\, e^{i\kappa x} \mbox{P.V.}\,\int_{\RR} \frac{1-e^{iz}}{|z|^{1+2s}}\,dz = C_s|\kappa|^{2s}\, e^{i\kappa x}\lim_{\varepsilon\to 0^+}\,\int_{|z|>\varepsilon} \frac{1-e^{iz}}{|z|^{1+2s}}\,dz 
	\\
	&= C_s|\kappa|^{2s}\, e^{i\kappa x}\lim_{\varepsilon\to 0^+}\,\int_{\varepsilon}^{+\infty} \frac{2-2\cos(z)}{z^{1+2s}}\,dz = C_s|\kappa|^{2s}\, e^{i\kappa x}\,\int_{\RR} \frac{1-\cos(z)}{|z|^{1+2s}}\,dz 
	\\
	&= C_s|\kappa|^{2s}\, e^{i\kappa x}C_s^{-1} = |\kappa|^{2s}\, e^{i\kappa x},
\end{align*}
and we have shown the claim \eqref{claim}.

{\bf Step 3}. Using \eqref{claim} we can readily check that $\phi^1$ takes the form
\begin{align*}
    \phi^1(x) = e^{i\rho_n^{\frac{1}{2s}}x},\;\;\; n\geq 1.
\end{align*}

Since $-M\rho_n$ and $M^3$ have opposite signs and $K(0)=-M\rho_n$, $K(M)=M^3$ for each $n\ge 1$, it follows that for each $n\geq 1$, \eqref{eq:lambda} has a unique real root located between 0 and $M$. This real root will be denoted by $\mu^1_n$. 
Since $\mu_n^1$ verifies
\begin{align*}
    \mu_n^1 = M-\frac{M(\mu_n^1)^2}{(\mu_n^1)^2+\rho_n}, \quad n\geq 1,
\end{align*}
we have that
\begin{align}\label{eq:lambdanp}
    \mu_n^1 = M - \frac{M^3}{\rho_n} + \mathcal O\left(\frac{1}{n^4}\right), \qquad n\geq 1.
\end{align}
Thus, in particular $M$ is an accumulation point in the spectrum and belongs to the essential spectrum. 

{\bf Step 4}. Finally, we analyze the complex roots of the characteristic equation \eqref{eq:lambda}. Let us write $\mu=\alpha+i\beta$ with $\alpha,\beta\in\mathbb{R}$ and $\beta\neq 0$. Plugging this value in \eqref{eq:lambda} and setting both real and imaginary parts to be equal to zero, we get that 

\begin{align*}
    \begin{cases}
        \beta^2=3\alpha^2+\rho_n,
        \\
        -8\alpha^3-2\alpha \rho_n-M\rho_n=0.
    \end{cases}
\end{align*}
Hence, $-2\alpha$ is the real root of the characteristic equation \eqref{eq:lambda}, and we can deduce that
\begin{align}
    \begin{cases}
        \displaystyle \mu_n^{2,3} = \alpha_n^{2,3}+i\beta_n^{2,3} & n\geq 1,
        \\[7pt]
        \displaystyle \alpha_n^2 = \alpha^3_n = -\frac{\mu_n^1}{2} & n\geq 1,
        \\[7pt]
        \displaystyle \beta_n^{2} = \sqrt{3\left(\frac{\mu_n^1}{2}\right)^2+\rho_n},\quad  \beta_n^3=-\sqrt{3\left(\frac{\mu_n^1}{2}\right)^2+\rho_n} & n\geq 1.
    \end{cases}
\end{align}

We have identified the three families of eigenvalues $\mu_n^j$, $n\geq 1$, $j\in\{1,2,3\}$ which satisfy \eqref{eq:eig}.
To each eigenvalue $\mu_n^j$, $j\in\{1,2,3\}$, $n\geq 1$, they correspond two independent eigenfunctions $\Phi^j_{\pm n}$ given
by the formula \eqref{eq:eigf}.  The proof is finished.
\end{proof}

\begin{rem}
{\em
Notice that taking $M=0$ in \eqref{eq:lambda}, we recover the equation $\mu^3+\rho_n\mu =0$ whose non-trivial solutions are $\mu=\pm i\rho_n^{\frac 12}$, $n\geq 1$. In other words, as it is expected, we recover the eigenvalues of the fractional wave operator without memory.}
\end{rem}

The following result will be used in the analysis of the spectrum of the operator associated with \eqref{wave_mem_adj_syst_cv}. 

\begin{lem}\label{lemma:sir} 
Let $0<s<1$.
The sequence of real numbers $\left(|\mu_n^1|\right)_{n\geq 1}$ verifies the estimates
\begin{equation}\label{eq:red}
	\frac{|M|}{\frac{M^2}{\rho_1}+1}\leq \left|\mu_n^1\right|<|M|, \qquad n\geq 1.
\end{equation}
Moreover, if $\frac 12 <s<1$, then this sequence is increasing.  
\end{lem}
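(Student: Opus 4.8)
The plan is to work directly from the characteristic equation $K_n(\mu):=\mu^3+\rho_n\mu-M\rho_n=0$ established in Theorem~\ref{eigen_mu_thm}, together with the fact (from Step~3 of that proof) that $\mu_n^1$ is the unique real root of $K_n$ and that it lies strictly between $0$ and $M$. The strict bound $|\mu_n^1|<|M|$ is then immediate, so the real content is the lower bound and the monotonicity. For the lower bound, I would start from the identity $\mu_n^1=M-\dfrac{M(\mu_n^1)^2}{(\mu_n^1)^2+\rho_n}$ already recorded in Step~3. Since $0\le (\mu_n^1)^2< M^2$ and $\rho_n\ge\rho_1$, the correction term satisfies $\left|\dfrac{M(\mu_n^1)^2}{(\mu_n^1)^2+\rho_n}\right|\le \dfrac{|M|M^2}{M^2+\rho_1}$ \emph{only after} one checks the map $t\mapsto t/(t+\rho_1)$ is increasing on $[0,\infty)$; combining this with $|\mu_n^1|=|M|\cdot\dfrac{\rho_n}{(\mu_n^1)^2+\rho_n}\ge |M|\cdot\dfrac{\rho_1}{M^2+\rho_1}=\dfrac{|M|}{M^2/\rho_1+1}$ gives exactly \eqref{eq:red}. (The cleanest route is actually the second display: write $\mu_n^1=\dfrac{M\rho_n}{(\mu_n^1)^2+\rho_n}$ directly from $K_n(\mu_n^1)=0$, bound the denominator above by $M^2+\rho_n$ and the resulting fraction below using $\rho_n\ge\rho_1$.)

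For the monotonicity of $\left(|\mu_n^1|\right)_{n\ge1}$ when $\tfrac12<s<1$, the key point is that under this hypothesis Lemma~\ref{lemm}(a)--(b) guarantee the $\rho_n$ are strictly increasing, so it suffices to show that $n\mapsto |\mu_n^1|$ is a strictly increasing function of $\rho_n$. I would treat $\mu^1$ as an implicit function $\mu^1=g(\rho)$ of a continuous parameter $\rho>0$ defined by $g(\rho)^3+\rho\,g(\rho)-M\rho=0$, with $g(\rho)$ the unique real root; $g$ is smooth by the implicit function theorem since $\partial_\mu K=3\mu^2+\rho>0$. Differentiating the defining relation gives $g'(\rho)=\dfrac{M-g(\rho)}{3g(\rho)^2+\rho}$. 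Since $g(\rho)$ lies strictly between $0$ and $M$, the numerator $M-g(\rho)$ has the same sign as $M$ and the denominator is positive, so $g'(\rho)$ has the sign of $M$; hence $|g(\rho)|=\sgn(M)\,g(\rho)$ is strictly increasing in $\rho$. Evaluating at $\rho=\rho_n<\rho_{n+1}$ then yields $|\mu_n^1|<|\mu_{n+1}^1|$. (One should dispose of the degenerate case $M=0$ separately, where $\mu_n^1\equiv0$ and both claims are vacuous or trivial; the stated inequality \eqref{eq:red} presupposes $M\ne0$.)

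I expect the main — though modest — obstacle to be purely bookkeeping: making sure the chain of inequalities in the lower bound is set up so that each monotone substitution ($(\mu_n^1)^2<M^2$ in the denominator, $\rho_n\ge\rho_1$) pushes the fraction in the correct direction, since these two substitutions move the ratio $\rho_n/((\mu_n^1)^2+\rho_n)$ in opposite-looking ways and one must be careful that the net effect is a genuine lower bound. For the monotonicity, the only subtlety is justifying that $\tfrac12<s<1$ is used solely to upgrade $\rho_n\le\rho_{n+1}$ to $\rho_n<\rho_{n+1}$ via Lemma~\ref{lemm}; the implicit-function argument itself is valid for all $0<s<1$. Everything else is a one-line consequence of $K_n(\mu_n^1)=0$ and $0<\sgn(M)\,\mu_n^1<|M|$.
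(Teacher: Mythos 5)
Your argument is correct. The lower bound $|\mu_n^1|\geq |M|/(M^2/\rho_1+1)$ is obtained exactly as in the paper: from $K_n(\mu_n^1)=0$ one gets $|\mu_n^1|/|M|=\rho_n/((\mu_n^1)^2+\rho_n)$, the denominator is enlarged to $M^2+\rho_n$ using $|\mu_n^1|<|M|$, and then $\rho_n\geq\rho_1$ pushes the ratio down to $1/(M^2/\rho_1+1)$; your parenthetical ``cleanest route'' is literally the paper's chain of inequalities. Where you genuinely diverge is the monotonicity. The paper stays discrete and algebraic: it subtracts the two cubic equations satisfied by $|\mu_n^1|$ and $|\mu_{n+1}^1|$ to obtain the identity
\begin{equation*}
\left(|\mu_{n+1}^1|-|\mu_n^1|\right)\left(|\mu_{n+1}^1|^2+|\mu_{n}^1|^2+|\mu_{n+1}^1||\mu_{n}^1|+\rho_{n+1}\right)
=\left(\rho_{n+1}-\rho_n\right)\left(|M|-|\mu_n^1|\right),
\end{equation*}
and reads off the sign of the left factor from the positivity of the right-hand side. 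You instead view the real root as a smooth function $g(\rho)$ via the implicit function theorem and show $g'(\rho)=(M-g)/(3g^2+\rho)$ has the sign of $M$. Both arguments are sound and both isolate the role of $\tfrac12<s<1$ correctly (it only serves, through simplicity of the $\rho_n$, to make $\rho_{n+1}>\rho_n$ strict). The paper's identity is more elementary and self-contained; your derivative computation is arguably more transparent about \emph{why} the root moves monotonically with $\rho$ and would extend immediately to continuity/regularity statements in $\rho$, at the modest cost of invoking the implicit function theorem. Your remark on disposing of $M=0$ separately is consistent with the paper, which excludes that case from the outset.
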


\begin{proof} 
We recall that $|\mu_n^1|\in(0,|M|)$ is the unique real root of the equation 
\begin{equation}\label{eq:mumod} 
	\mu^3+\rho_n\mu-\rho_n|M|=0.
\end{equation}
This implies that
\begin{align*}
	\frac{|\mu_{n}^1|}{|M|}=\frac{\rho_n}{(\mu_{n}^1)^2+\rho_n}\geq \frac{\rho_n}{M^2+\rho_n} \geq \frac{1}{\frac{M^2}{\rho_n}+1}\geq \frac{1}{\frac{M^2}{\rho_1}+1},
\end{align*}
and consequently \eqref{eq:red} holds. Finally, using \eqref{eq:mumod} with $\mu=|\mu_n^1|$ and $\mu=|\mu_{n+1}^1|$ we obtain that
\begin{align*}
	\left(|\mu_{n+1}^1|-|\mu_n^1|\right)\left(|\mu_{n+1}^1|^2+|\mu_{n}^1|^2+|\mu_{n+1}^1||\mu_{n}^1|+\rho_{n+1}\right)
	=\left(\rho_{n+1}-\rho_n\right)\left(|M|-|\mu_n^1|\right).
\end{align*}

By Lemma \ref{lemm}, if $\frac 12<s<1$ the eigenvalues $(\rho_n)_{n\ge 1}$ are simple. This implies that $\rho_{n+1}>\rho_n$ for all $n\geq 1$. This, together with the fact that $|\mu_n^1|\in(0,|M|)$, yields
\begin{align*}
	|\mu_{n+1}^1|>|\mu_n^1|.
\end{align*}
We have shown that the sequence  $\left(|\mu_n^1|\right)_{n\geq 1}$ is increasing and the proof is finished.
\end{proof}

We conclude this section with the following remark.

\begin{rem} 
{\em The spectral properties from Theorem \ref{eigen_mu_thm} has some immediate interesting consequences.
\begin{enumerate}

\item[(a)] Since there are eigenvalues with positive real part, it follows that the operator $\mathcal A$ in \eqref{eq:memsys} is not dissipative. However, since all the real parts of the eigenvalues are uniformly bounded, the well-posedness of the equation is ensured.

\item[(b)] The existence of a family of eigenvalues having an accumulation point implies that our initial equation \eqref{wave_mem} cannot be controlled with a fixed support control $\omega_0$, unless we are in the trivial case $\omega_0=(-1,1)$.

\end{enumerate}
}
\end{rem}

\subsection{Spectral analysis of the operator associated with the system \eqref{wave_mem_adj_syst_cv}}
We now move to the spectral analysis for the system \eqref{wave_mem_adj_syst_cv}. We recall that $c$ is a parameter which belongs to  $\mathbb{R}\setminus\{-\gamma,0,\gamma\}$ where $\gamma$ is given in \eqref{Gap}. 
Similarly to the previous section, the system \eqref{wave_mem_adj_syst_cv} can be equivalently written as a system of order one in the following way:
\begin{align}\label{eq:memchanged}
    \begin{cases}
        W'(t) + \mathcal A_cW=0 & t\in (0,T),
        \\
        W(T)=W^0,
    \end{cases}
\end{align}
where $W=\left(\begin{array}{c}
    \varphi\\\eta\\\psi\end{array}\right)$,
$W^0=\left(\begin{array}{c} \varphi^0\\\eta^0\\\psi^0\end{array}\right)$ and the unbounded operator ${\mathcal A}_c: D({\mathcal A_c})\rightarrow X_{-\sigma}$ (for $\sigma\ge 0$)  is defined by
\begin{align*}
    \mathcal A_c\left(\begin{array}{c}
        \varphi
        \\
        \eta
        \\
        \psi\end{array}\right)=\left(\begin{array}{c}
            -\eta
            \\
            c^2\varphi_{xx}+\ffl{s}{\varphi}+2c\eta_x-M\ffl{s}{\psi}
            \\
            c\psi_x+\varphi\end{array}\right).
\end{align*}

The domain $D({\mathcal A}_c)$ is given by $D({\mathcal A}_c)=\mathbb H_1^{-\sg+1}(\I)\times \mathbb H_1^{-\sg}(\I)\times \mathbb H_s^{-\sg+1}(\I)$ and here the space $X_{-\sigma}$ is given by
$X_{-\sigma}=\mathbb H_1^{-\sigma}(\I)\times\mathbb H_1^{-\sg-1}(\I)\times\mathbb H_s^{-\sigma}(\I)$.

We notice that using the embeddings \eqref{Cont-Emb}, by taking $\sigma$ large enough, $D(\mathcal A_c)$ can be also defined as $D({\mathcal A}_c)=\mathbb H_s^{-\sg+1}(\I)\times \mathbb H_s^{-\sg}(\I)\times \mathbb H_s^{-\sg+1}(\I)$ and $X_{-\sigma}=\mathbb H_s^{-\sigma}(\I)\times\mathbb H_s^{-\sg-1}(\I)\times\mathbb H_s^{-\sigma}(\I)$.
	
The next two theorems give the spectral properties of the operator ${\mathcal A}_c$.

\begin{thm} 
The spectrum of the operator $(D(\mathcal A_c),\mathcal A_c)$ is given by
\begin{align}\label{eq:specac}
    \operatorname{Spec}(\mathcal A_c)=\left(\lambda_n^j\right)_{n\in\mathbb{Z}^*,\, 1\leq j\leq 3},
\end{align}
where the eigenvalues $\lambda_n^j$ are given by
\begin{align}\label{eq:eigneq}
    \lambda_n^j=\mu_{|n|}^j+i\,\sgn(n)c\rho_{|n|}^{\frac{1}{2s}}, \qquad  n\in\mathbb{Z}^*,\,\, 1\leq j\leq 3,
\end{align}
and $\mu_{|n|}^j$ are given in \eqref{eq:eig}.
\end{thm}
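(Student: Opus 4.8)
The plan is to exploit the relationship between $\mathcal A_c$ and $\mathcal A$ induced by the change of variables \eqref{cv}, namely that $\mathcal A_c$ is a ``shifted'' version of $\mathcal A$ in which the spatial frequencies pick up an extra drift term coming from the transport operators $c\,\partial_x$ and $2c\,\partial_{xt}$. Concretely, I would look for eigenfunctions of $\mathcal A_c$ of the same plane-wave form as in \eqref{eq:eigf}, i.e.\ $W = (1,-\lambda,\lambda^{-1})^{\mathsf T}e^{i\kappa x}$, plug this ansatz into $\mathcal A_c W = \lambda W$, and use the identities $(-d_x^2)e^{i\kappa x} = \kappa^2 e^{i\kappa x}$ and $\ffl{s}{e^{i\kappa x}} = |\kappa|^{2s} e^{i\kappa x}$ (the latter is \eqref{claim} from Theorem \ref{eigen_mu_thm}). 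Since the plane waves $e^{i\kappa x}$ that satisfy the exterior Dirichlet condition on $\I$ are exactly those with $\kappa = \pm\rho_m^{\frac{1}{2s}}$ (because the eigenfunctions $e_m$ of $(-d_x^2)_D^s$ are, after the change of variable, built from these; the admissible frequency set is preserved under translation of the domain), the only new feature compared with Theorem~\ref{eigen_mu_thm} is the first-order-in-time transport term $2c\eta_x$ and the $c\psi_x$ term in the third component.

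The key computation goes as follows. From $\mathcal A_c W = \lambda W$ with the ansatz one reads off, from the first component, $\eta = -\lambda\varphi$; from the third component, $c(i\kappa)\psi + \varphi = \lambda\psi$, so $\psi = \varphi/(\lambda - ic\kappa)$; and the second component gives
\begin{align*}
    -c^2\kappa^2\varphi + |\kappa|^{2s}\varphi + 2c(i\kappa)(-\lambda\varphi) - M|\kappa|^{2s}\psi = \lambda\eta = -\lambda^2\varphi .
\end{align*}
Writing $\lambda = \mu + ic\kappa$ and substituting, the terms $-c^2\kappa^2$, $-2ic\kappa\lambda$ and $\lambda^2$ combine: $\lambda^2 - 2ic\kappa\lambda - c^2\kappa^2 = (\lambda - ic\kappa)^2 = \mu^2$, and simultaneously $\lambda - ic\kappa = \mu$ makes $\psi = \varphi/\mu$. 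Hence the whole equation collapses to $-\mu^2\varphi + |\kappa|^{2s}\varphi - M|\kappa|^{2s}\varphi/\mu = 0$ after factoring out $\varphi$, i.e.\ $\mu^3 - |\kappa|^{2s}\mu + M|\kappa|^{2s}\cdot(-1)\cdot(-1)\,$; more carefully, dividing by $\varphi$ and multiplying by $\mu$ one gets $-\mu^3 + |\kappa|^{2s}\mu - M|\kappa|^{2s} = 0$, which is (up to sign) exactly the characteristic equation \eqref{eq:lambda} with $\rho_n$ replaced by $|\kappa|^{2s} = \rho_{|n|}$ when $\kappa = \pm\rho_{|n|}^{\frac{1}{2s}}$. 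Therefore $\mu$ must be one of the $\mu_{|n|}^j$ of Theorem~\ref{eigen_mu_thm}, and $\lambda = \mu_{|n|}^j + ic\kappa = \mu_{|n|}^j + i\,\sgn(n)\,c\,\rho_{|n|}^{\frac{1}{2s}}$, which is precisely \eqref{eq:eigneq}. One also checks that the resulting $W$ is a genuine element of $X_{-\sigma}$ (it is, since it is a finite linear combination of the Fourier-type modes and the embeddings \eqref{Cont-Emb} are in force), and that the eigenfunctions so obtained, ranging over $n\in\ZZ^*$ and $j\in\{1,2,3\}$, span a dense subspace, which rules out any further spectrum.

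For completeness I would also verify that no eigenvalues are lost or spuriously introduced by the domain change: the point is that $x\mapsto x+ct$ is an isometric translation, so $(-d_x^2)_D^s$ on $(-1,1)$ and on $\I = (-1+ct, 1+ct)$ are unitarily equivalent with the same eigenvalues $\rho_n$ and with eigenfunctions that are translates of one another; the admissible spatial-frequency content is thus unchanged, and the argument of Step~1 of Theorem~\ref{eigen_mu_thm} applies verbatim to show that any eigenvalue $\lambda$ of $\mathcal A_c$ forces $\varphi$ to be an eigenfunction of the (translated) Dirichlet fractional Laplacian. The only delicate point — and the one I expect to require the most care — is bookkeeping the transport contributions so that the ``miraculous'' cancellation $(\lambda - ic\kappa)^2 = \mu^2$ and $\psi = \varphi/\mu$ is transparent; this is why the substitution $\lambda = \mu + ic\kappa$ must be introduced at the outset rather than derived afterward. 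Once that is set up, the characteristic equation is identical to the one already analyzed, and \eqref{eq:specac}–\eqref{eq:eigneq} follow. Note also that the exclusion $c\notin\{-\gamma,0,\gamma\}$ plays no role in \emph{this} statement (it matters later, for the gap condition on the $\lambda_n^j$); here $c$ can be any real number.
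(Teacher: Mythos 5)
Your approach is the same as the paper's: plug the Fourier modes $e^{i\,\sgn(n)\rho_{|n|}^{1/(2s)}x}$ into $\mathcal A_c W=\lambda W$, read off $\eta=-\lambda\varphi$ and $\psi=\varphi/(\lambda-ic\kappa)$ from the first and third components, and collapse the second component via the substitution $\mu=\lambda-ic\kappa$ to recover the characteristic equation \eqref{eq:lambda} with $\rho_{|n|}=|\kappa|^{2s}$, which is precisely what the authors do (keeping $\lambda-ic\rho_{|n|}^{1/(2s)}$ written out rather than naming it $\mu$). One bookkeeping slip to fix: once you use $\lambda^2-2ic\kappa\lambda-c^2\kappa^2=\mu^2$, the second-component equation reduces to $\mu^2+|\kappa|^{2s}-M|\kappa|^{2s}/\mu=0$, hence $\mu^3+\rho_{|n|}\mu-M\rho_{|n|}=0$ exactly matching \eqref{eq:lambda}; the $-\mu^2\varphi+\cdots$ and $-\mu^3+|\kappa|^{2s}\mu-M|\kappa|^{2s}=0$ in your write-up carry a spurious minus sign, so the hedge ``up to sign'' is not needed (and the wrong-sign cubic $\mu^3-\rho_{|n|}\mu+M\rho_{|n|}=0$ would \emph{not} have the roots $\mu_{|n|}^j$).
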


\begin{proof}
From the equality
\begin{align*}
    \mathcal A_c\left(\begin{array}{c}
    \phi^1
    \\
    \phi^2
    \\
    \phi^3
    \end{array}\right) =\lambda\left(\begin{array}{c}
    \phi^1
    \\
    \phi^2
    \\
    \phi^3
    \end{array}\right),
\end{align*}
we can deduce that
\begin{align*}
    \begin{cases}
    \phi^2 = -\lambda\phi^1,
    \\
    \lambda\phi^3 = c\phi^3_x+\phi^1,
    \\
    c^2\phi^1_{xx}+\ffl{s}{\phi^1} + 2c\phi^2_x - M\ffl{s}{\phi^3} = \lambda\phi^2.
    \end{cases}
\end{align*}
To find the solutions of the above system we write
\begin{align*}
    \phi^{\,j}(x)=\sum_{n\in\mathbb{Z}^*}a_{nj}e^{i\,\sgn(n)\rho_{|n|}^{\frac{1}{2s}}x},\quad 1\leq j\leq 3.
\end{align*}
Then, assuming for the moment that $n\geq 1$, we can deduce that the coefficients $a_{nj}$ verify
\begin{align*}
    \begin{cases}
        a_{n2}=-\lambda a_{n1},
        \\[15pt]
        \displaystyle a_{n3}=\frac{a_{n1}}{\lambda-ic\rho_{n}^{\frac{1}{2s}}},
        \\[15pt]
        \displaystyle \left[-c^2\rho_{n}^{\frac{1}{s}}+\rho_{n} -2ic\rho_{n}^{\frac{1}{2s}}\lambda -\frac{M\rho_{n}}{\lambda - ic\rho_{n}^{\frac{1}{2s}}}\right]a_{n1}=-\lambda^2a_{n1}.
    \end{cases}
\end{align*}
Therefore, $\lambda$ is an eigenvalues of ${\mathcal A}_c$ if and only if it verifies the equation
\begin{align*}
    (\lambda-ic\rho_{n}^{\frac{1}{2s}})^3+\rho_n(\lambda-ic\rho_{n}^{\frac{1}{2s}})-M^2\rho_n=0.
\end{align*}

If $n\leq -1$, then a similar computation gives the characteristic equation
\begin{align*}
	(\lambda+ic\rho_{|n|}^{\frac{1}{2s}})^3+\rho_{|n|}(\lambda-ic\rho_{|n|}^{\frac{1}{2s}})-M^2\rho_{|n|}=0.
\end{align*}

By taking into account that $\mu_{|n|}^j$ are the zeros of \eqref{eq:lambda}, we obtain that the eigenvalues of ${\mathcal A}_c$ are given by the formula \eqref{eq:eigneq} and the proof is finished.
\end{proof}

\begin{thm} \label{te:lari}
Each eigenvalue $\lambda_n^j\in \operatorname{Spec}(\mathcal A_c)$
has an associated eigenvector of the form
\begin{align}\label{eq:eigfnew}
    \Psi_{n}^{j}=\left(\begin{array}{c}
                    1
                    \\[5pt]
                    -\lambda_n^j
                    \\[5pt]
                    \displaystyle\frac{1}{\lambda_n^j-i\,\sgn(n)c\rho_{|n|}^{\frac{1}{2s}}}\end{array}\right)e^{i\,\sgn(n)\rho_{|n|}^{\frac{1}{2s}}x},\qquad
    j\in\{1,2,3\},\,\, n\in\mathbb{Z}^*.
\end{align}
In addition, for any $\sigma\geq 0$, the set $\left(\rho_{|n|}^\sigma\Psi^{j}_n\right)_{n\in\mathbb{Z}^*,\,1\leq j\leq 3}$  forms a Riesz basis in  $X_{-\sigma}$.
\end{thm}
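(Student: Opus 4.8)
The eigenvector formula \eqref{eq:eigfnew} is obtained by exactly retracing the computation in the previous theorem: for the eigenvalue $\lambda_n^j$ one has $\phi^2=-\lambda_n^j\phi^1$, $\phi^3=\phi^1/(\lambda_n^j-i\,\sgn(n)c\rho_{|n|}^{1/2s})$, and $\phi^1(x)=e^{i\,\sgn(n)\rho_{|n|}^{1/2s}x}$, which gives precisely \eqref{eq:eigfnew}; since the characteristic equation has been used to define $\lambda_n^j$, this triple indeed lies in $D(\mathcal A_c)$ and solves $\mathcal A_c\Psi_n^j=\lambda_n^j\Psi_n^j$. So the only real content is the Riesz basis assertion, and the plan is to reduce it to the fact that $\left(e^{i\,\sgn(n)\rho_{|n|}^{1/2s}x}\right)_{n\in\mathbb{Z}^*}$ is a Riesz basis of $L^2(\mathcal I)$ (equivalently of $L^2$ on an interval of length $2$, after translating by $ct$), composed with a uniformly invertible ``vector amplitude'' transformation.

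First I would record the scalar fact: by Lemma \ref{lemm}(b), the exponents $\rho_{|n|}^{1/2s}$, suitably indexed over $\mathbb{Z}^*$ with $\rho_{-n}^{1/2s}:=-\rho_n^{1/2s}$, form a sequence that is asymptotically separated with gap $\ge\gamma\ge\pi/2$ and whose counting function matches that of $\{\pi k\}$; hence by the Kadec-type / Ingham-type perturbation theory of nonharmonic Fourier series (or directly by Kwa\'snicki's asymptotics $\rho_n^{1/2s}=\tfrac{n\pi}{2}+O(1)$), the family $\big(\rho_{|n|}^{1/2s}\big)$ is a complete interpolating sequence, so $\left(e^{i\,\sgn(n)\rho_{|n|}^{1/2s}x}\right)_{n\in\mathbb{Z}^*}$ is a Riesz basis of $L^2(\mathcal I)$. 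I would cite the relevant place in the later ``biorthogonal'' section or a standard reference (Young, \emph{An Introduction to Nonharmonic Fourier Series}) rather than prove it here. Next, fix $n$ and look at the $3\times 3$ block: for the three values $j=1,2,3$ the vectors
\begin{align*}
v_n^j=\begin{pmatrix}1\\ -\lambda_n^j\\ \dfrac{1}{\lambda_n^j-i\,\sgn(n)c\rho_{|n|}^{1/2s}}\end{pmatrix}
=\begin{pmatrix}1\\ -\mu_{|n|}^j-i\,\sgn(n)c\rho_{|n|}^{1/2s}\\ \dfrac{1}{\mu_{|n|}^j}\end{pmatrix}
\end{align*}
must be shown to be uniformly linearly independent, i.e. the matrix $V_n=[v_n^1\,|\,v_n^2\,|\,v_n^3]$ and its inverse are bounded uniformly in $n$. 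Its determinant is (up to the common shift in the middle row, which does not change the determinant) a Vandermonde-type quantity in $\{\mu_{|n|}^1,\mu_{|n|}^2,\mu_{|n|}^3\}$ times $\prod_j(\mu_{|n|}^j)^{-1}$; from \eqref{eq:eig} one has $\mu_{|n|}^1\to M$, $\mu_{|n|}^{2,3}\sim \pm i\sqrt{\rho_n}$, so after rescaling the second and third columns by $\rho_n^{-1/2}$ one checks $|\det \widetilde V_n|$ is bounded below and $\|\widetilde V_n\|$ bounded above uniformly in $n$. The point is that after the rescaling $\operatorname{diag}(1,\rho_n^{-1/2},\rho_n^{-1/2})$ the three columns converge to a fixed invertible matrix as $n\to\infty$ (with $\mu^1\to M$, $\alpha^{2,3}\to -M/2$, $\beta^{2,3}\to\pm\infty$ only after the first entry, handled by the weights), and a compactness/continuity argument handles the finitely many small $n$. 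This rescaling is exactly matched by the weights defining $X_{-\sigma}=\mathbb H_1^{-\sigma}\times\mathbb H_1^{-\sigma-1}\times\mathbb H_s^{-\sigma}$, where the middle component carries one extra negative power, so the natural norm on $X_{-\sigma}$ of $\rho_{|n|}^\sigma\Psi_n^j$ is comparable to $\big(|v_n^j\text{-first}|^2+\rho_n^{-1}|v_n^j\text{-second}|^2+|v_n^j\text{-third}|^2\big)^{1/2}\|e^{i\,\sgn(n)\rho_{|n|}^{1/2s}x}\|$, which is uniformly bounded above and below.

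To assemble the conclusion: given $\sum_{n,j}c_{nj}\,\rho_{|n|}^\sigma\Psi_n^j$, group the terms by $n$; the $n$-th group equals $e^{i\,\sgn(n)\rho_{|n|}^{1/2s}x}$ times $V_n$ applied (componentwise in the weighted sense) to $(c_{n1},c_{n2},c_{n3})$, and since $V_n,V_n^{-1}$ are uniformly bounded, the weighted $\ell^2$ norm of the resulting coefficient vectors is comparable to $\sum_j|c_{nj}|^2$. Summing over $n$ and using that $\left(e^{i\,\sgn(n)\rho_{|n|}^{1/2s}x}\right)_{n\in\mathbb{Z}^*}$ is a Riesz basis of $L^2(\mathcal I)$ (so cross terms between different $n$'s are controlled, $\sum_n\|\text{group}_n\|^2\asymp\|\sum_n\text{group}_n\|^2$), gives the two-sided Riesz bound, and completeness follows from completeness of the exponentials together with the invertibility of each $V_n$. \textbf{Main obstacle.} The genuinely delicate step is the \emph{uniform} invertibility of the amplitude matrices $V_n$: one must track the different growth rates of $\mu_{|n|}^1=O(1)$ versus $\mu_{|n|}^{2,3}=O(\rho_n^{1/2})$ and verify that the weights built into $X_{-\sigma}$ (in particular the asymmetry $\mathbb H_1^{-\sigma-1}$ in the middle slot) are exactly the ones that render both $V_n$ and $V_n^{-1}$ bounded; getting the lower bound on $|\det V_n|$ right for all $n\ge 1$, not just asymptotically, is where care is needed. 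Everything else is bookkeeping on top of the classical nonharmonic Fourier series fact.
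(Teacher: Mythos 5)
Your proposal follows essentially the same route as the paper: derive the eigenvector components by back-substitution from the eigenvalue system, reduce the Riesz-basis claim to a block-diagonal structure (scalar exponential family times $3\times 3$ amplitude matrices), and obtain the two-sided bounds from uniform invertibility of the rescaled matrices $B_n$, proved exactly as you suggest by showing $B_n^*B_n$ converges to a fixed positive-definite matrix $\widetilde B$ with $\det\widetilde B=6/M^2\neq 0$ and handling finitely many $n$ by the nonvanishing Vandermonde-type determinant. The only difference is cosmetic: where you invoke Kadec/Ingham theory to make $\bigl(e^{i\,\sgn(n)\rho_{|n|}^{1/2s}x}\bigr)_{n\in\mathbb{Z}^*}$ a Riesz basis, the paper simply expands the $X_{-\sigma}$-norm as if these exponentials were orthogonal and sums the blocks directly.
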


\begin{proof}
It is easy to see that to each eigenvalue $\lambda_n^j$, $j\in\{1,2,3\}$, $n\in\mathbb{Z}^*$, corresponds an eigenfunction $\Psi^j_{n}$ given
by \eqref{eq:eigfnew}. Let us show that $\left(\rho_{|n|}^\sigma\Psi^{j}_n\right)_{n\in\mathbb{Z}^*,\,1\leq j\leq 3}$  forms a Riesz basis in  $X_{-\sigma}$. We proof the result in several steps.

{\bf Step 1}. Firstly, we remark that $\left(\Psi^{j}_n\right)_{n\in\mathbb{Z}^*,\,1\leq j\leq 3}$ is complete in $X_{-\sigma}$. This follows from the fact that, given an arbitrary element in  $X_{-\sigma}$,
\begin{align*}
    \left(\begin{array}{c}
        y^0
        \\[2pt]
        w^0
        \\[2pt]
        z^0
    \end{array}\right) = \sum_{n\in\mathbb{Z}^*}\left(\begin{array}{c}
            \beta_n^1
            \\[2pt]
            \beta_n^2
            \\[2pt]
            \beta_n^3
        \end{array}\right)\rho_{|n|}^\sigma e^{i\,\sgn(n)\rho_{|n|}^{\frac{1}{2s}}x},
\end{align*}
 there exists a unique sequence $(a_n^{j})_{n\in\mathbb{Z}^*,\,1\leq j\leq 3}$ such that
\begin{align*}
    \left(\begin{array}{c}
        y^0
        \\
        w^0
        \\
        z^0
    \end{array}\right) = \sum_{n\in\mathbb{Z}^*,\,1\leq j\leq 3}a_n^{j}\rho_{|n|}^\sigma\Psi_n^{j}.
\end{align*}
From \eqref{eq:eigfnew}, the preceding identity is equivalent to the system
\begin{align}\label{eq:sysla}
    \begin{cases}
        a_n^{1}+a_n^{2}+a_n^{3}=\beta_n^1 & n\geq 1,
        \\[5pt]
        \lambda_{n}^1a_n^{1}+\lambda_{n}^2 a_n^{2}+\lambda_{n}^3 a_n^{3}=-\beta_n^2 & n\geq 1,
        \\[5pt]
        \displaystyle\frac{a_{n}^{1}}{\lambda_n^1-i\,\sgn(n)c\rho_{|n|}^{\frac{1}{2s}}}+\frac{a_{n}^{2}}{\lambda_n^2-i\,\sgn(n)c\rho_{|n|}^{\frac{1}{2s}}}+\frac{a_{n}^{3}}{\lambda_n^3-i\,\sgn(n)c\rho_{|n|}^{\frac{1}{2s}}}=\beta_{n}^3 & n\geq 1.
    \end{cases}
\end{align}
The matrix associated with the above system is given by 
\begin{align*}
	B=\left(\begin{array}{ccc}
	1& 1& 1
	\\[3pt]
	\lambda_{n}^1 &\lambda_{n}^2 &\lambda_{n}^3
	\\[3pt]
    \displaystyle \frac{1}{\lambda_n^1-i\,\sgn(n)c\rho_{|n|}^{\frac{1}{2s}}} & \displaystyle \frac{1}{\lambda_n^2-i\,\sgn(n)c\rho_{|n|}^{\frac{1}{2s}}} & \displaystyle \frac{1}{\lambda_n^3-i\,\sgn(n)c\rho_{|n|}^{\frac{1}{2s}}}
    \end{array}\right).
\end{align*}
Since, for each $n\ge 1$, the determinant of $B$ is given by 
\begin{align*}
\det(B) = \frac{(\lambda_n^2-\lambda_n^1)(\lambda_n^3-\lambda_n^1)(\lambda_n^3-\lambda_n^2)}{\left(\lambda_n^1-i\,\sgn(n)c\rho_{|n|}^{\frac{1}{2s}}\right)\left(\lambda_n^2-i\,\sgn(n)c\rho_{|n|}^{\frac{1}{2s}}\right)\left(\lambda_n^3-i\,\sgn(n)c\rho_{|n|}^{\frac{1}{2s}}\right)}\neq 0,
\end{align*}
we can deduce that for each $n\geq 1$, the system has a unique solution $\left(a_n^{j}\right)_{1\leq j\leq 3}$.


{\bf Step 2}. We notice that

\begin{align}\label{eq:ri1cv}
	\Bigg\|\sum_{n\in\mathbb{Z}^*,\,1\leq j\leq 3}& a_n^j\rho_{|n|}^\sigma\Psi_n^j\,\Bigg\|_{X_{-\sigma}}^2 
	\\
	=& \sum_{n\in\mathbb{Z}^*}\rho_{|n|}^{2\sigma}\norm{a_n^1\Psi_n^{1}+a_n^2\Psi_n^{2}+a_n^3\Psi_n^{3}}{X_{-\sigma}}^2 \nonumber 
	\\
	=&\, 2 \sum_{n\in\mathbb{Z}^*}\Bigg[\left|a_n^1+a_n^2+a_n^3\right|^2 + \frac{1}{\rho_{|n|}^2}\left|\lambda_{n}^1a_n^1+\lambda_{n}^2a_n^2+ \lambda_{n}^3a_n^3\right|^2 \nonumber 
	\\
	&+\bigg|\frac{a_n^1}{\lambda_{n}^1-i\,\sgn(n)c\rho_{|n|}^{\frac{1}{2s}}}+\frac{a_n^2}{\lambda_{n}^2-i\,\sgn(n)c\rho_{|n|}^{\frac{1}{2s}}}+\frac{a_n^3}{\lambda_{n}^3-i\,\sgn(n)c\rho_{|n|}^{\frac{1}{2s}}}\bigg|^2\,\Bigg]\notag\\
	=& 2\! \sum_{n\in\mathbb{Z}^*}\!\left\|B_n\! \left(\!\begin{array}{l}a_n^1\\a_n^2\\a_n^3\end{array}\!\right)\right\|_2^2, \nonumber 
	\\[10pt]
\end{align}
where the matrix $B_n$ is given by
\begin{align*}
	B_n=\left(\begin{array}{ccc}
        1& 1& 1
        \\[8pt]
        \displaystyle \frac{\lambda_{n}^1}{\rho_{|n|}} &\displaystyle\frac{\lambda_{n}^2}{\rho_{|n|}}&\displaystyle\frac{\lambda_{n}^3}{\rho_{|n|}}
        \\[23pt]
        \displaystyle \frac{1}{\lambda_n^1-i\,\sgn(n)c\rho_{|n|}^{\frac{1}{2s}}} &\displaystyle\frac{1}{\lambda_n^2-i\,\sgn(n)c\rho_{|n|}^{\frac{1}{2s}}} &\displaystyle \frac{1}{\lambda_n^3-i\,\sgn(n)c\rho_{|n|}^{\frac{1}{2s}}}
    \end{array}\right).
\end{align*}

{\bf Step 3}. 
We claim that there are two constants $0<\textfrak{a}_1<\textfrak{a}_2$, independent of $n$, such that 
\begin{equation}\label{eq:specbn2}
	\sigma(B_n^*B_n)\subset [\textfrak{a}_1,\textfrak{a}_2],\qquad n\in\mathbb{Z}^*.
\end{equation}
We notice that $B_n$ is not singular. Indeed, we have 
\begin{align}
	\det(B_n)=\frac{(\lambda_n^2-\lambda_n^1)(\lambda_n^3-\lambda_n^1)(\lambda_n^3-\lambda_n^2)}{\rho_{|n|}^3\left(\lambda_n^1-i\,\sgn(n)c\rho_{|n|}^{\frac{1}{2s}}\right)\left(\lambda_n^2-i\,\sgn(n)c\rho_{|n|}^{\frac{1}{2s}}\right)\left(\lambda_n^3-i\,\sgn(n)c\rho_{|n|}^{\frac{1}{2s}}\right)}\neq 0,
\end{align}
which implies that, for each $n\in\mathbb{Z}^*$,
\begin{equation}\label{eq:inspei2}
	\min\Big\{|z|\,:\, z\in\sigma(B_n^*B_n)\Big\}>0.
\end{equation}
On the other hand, we have that $B_n^*B_n : = \left(B_{k,\ell}\right)_{1\leq k,\ell\leq 3}$ with
\begin{align*}
	&B_{k,k} = 1+\frac{|\lambda_n^k|^2}{\rho_{|n|}}+\frac{1}{\left|\lambda_n^k-i\,\sgn(n)c\rho_{|n|}^{\frac{1}{2s}}\right|^2},\;\;\; k=1,2,3
	\\
	&B_{k,\ell} = 1+\frac{\overline{\lambda}_n^k\lambda_n^\ell}{\rho_{|n|}}+\frac{1}{\left(\overline{\lambda}_n^k+i\,\sgn(n)c\rho_{|n|}^{\frac{1}{2s}}\right)\left(\lambda_n^\ell-i\,\sgn(n)c\rho_{|n|}^{\frac{1}{2s}}\right)} = \overline{B}_{\ell,k}, \;\;\; k\neq\ell=1,2,3.
\end{align*}	
The above relation, \eqref{eq:eigneq}, and \eqref{eq:eig} imply that
\begin{equation}\label{eq:limbn}
	B_n^*B_n \longrightarrow \widetilde{B}:=\left(\begin{array}{ccc}
	\displaystyle 1+c^2+\frac{1}{|M|^2}& 1+c(c+1)& 1+c(c-1)
	\\[4pt]
	\displaystyle 1+c(c+1)& 1+(c+1)^2 &1+(c+1)(c-1)
	\\[4pt]
	\displaystyle 1+c(c-1) &1+(c+1)(c-1)& 1+(c-1)^2
	\end{array}\right)\mbox{ as }n\rightarrow \infty.
\end{equation}

Since $\det( \widetilde{B})=\frac{6}{M^2}\neq 0$ and $\widetilde{B}$ is positive defined, it follows that there are two constants $0<\textfrak{a}_1'<\textfrak{a}_2'$ such that 
\begin{equation}\label{eq:specbn22}
	\operatorname{Spec}( \widetilde{B})\subset [\textfrak{a}_1',\textfrak{a}_2'].
\end{equation}
From \eqref{eq:inspei2} and \eqref{eq:specbn22}, we can deduce that the claim \eqref{eq:specbn2} holds.

{\bf Step 4}. Using \eqref{eq:ri1cv} and \eqref{eq:specbn2} we get that
 
\begin{align}\label{eq:insp2}
    \textfrak{a}_1\left(|a_n^1|^2+|a_n^2|^2+|a_n^3|^2\right)\leq
    \norm{B_n\left(\begin{array}{l}a_n^1\\a_n^2\\a_n^3\end{array}\right)}{2}^2\leq
    \textfrak{a}_2\left(|a_n^1|^2+|a_n^2|^2+|a_n^3|^2\right).
\end{align}
These inequalities together with \eqref{eq:ri1cv} imply that
\begin{align}\label{eq:inri2}
	2 \textfrak{a}_1 \sum_{n\in\mathbb{Z}^*,\,1\leq j\leq 3}|a_n^j|^2\leq \norm{\sum_{n\in\mathbb{Z}^*,\,1\leq j\leq 3}a_n^j\rho_{|n|}^\sigma\Psi_n^j\,}{X_{-\sigma}}^2 &\leq 2 \textfrak{a}_2 \sum_{n\in\mathbb{Z}^*,\,1\leq j\leq 3}|a_n^j|^2.
\end{align}
The estimates \eqref{eq:inri2} show that $\left(\rho_{|n|}^\sigma\Psi_n^j\right)_{n\in\mathbb{Z}^*,\,1\leq j\leq 3}$ is a Riesz basis in  $X_{-\sigma}$. The proof is finished.
\end{proof}

\subsection{Gap properties of the spectrum of the operator associated with the system \eqref{wave_mem_adj_syst_cv}}

Here we analyze the distance between the three families composing the spectrum of the operator associated to the system \eqref{wave_mem_adj_syst_cv}. As it is well-known, these gap properties shall play a fundamental role in the proof of our controllability result. 

%

Since there is no major change in the spectrum if $c$ is replaced by $-c$, we shall limit our analysis to the case $c>0$. Let us start by studying the distance between the elements of $(\lambda_n^1)_{n\in\mathbb{Z}^*}$ and those of  $(\lambda_n^2)_{n\in\mathbb{Z}^*}$ and $(\lambda_n^3)_{n\in\mathbb{Z}^*}$. 

\begin{lem}\label{lemma:dist1} 
Let $\frac 12<s<1$.
For any $n,m\in\mathbb{Z}^*$ and $k\in\{2,3\}$ we have that 
\begin{equation}\label{eq:diff123} 
\left|\lambda_n^1-\lambda_m^k\right|\geq \frac{3|M|}{\frac{2M^2}{\mu_1}+2}>0.
\end{equation}
Moreover,  for any $n\neq m$ we have that
\begin{equation}\label{eq:diff11}
    \left|\lambda_n^1-\lambda_m^1\right|\geq c\left|\mu_{|n|}^{\frac{1}{2s}}-\mu_{|m|}^{\frac{1}{2s}}\right|>0.
\end{equation}
\end{lem}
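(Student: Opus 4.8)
The plan is to work directly with the explicit formula \eqref{eq:eigneq}, namely $\lambda_n^j = \mu_{|n|}^j + i\,\sgn(n)c\rho_{|n|}^{\frac{1}{2s}}$, and separate the estimate into a comparison of real parts (governed by the $\mu$'s) and a comparison of imaginary parts (governed by $c\rho_{|n|}^{\frac{1}{2s}}$, plus the imaginary part of $\mu_{|n|}^{2,3}$ when $k\in\{2,3\}$).

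For \eqref{eq:diff123}, I would first observe that $\mu_{|n|}^1$ is real (by Step 3 of Theorem \ref{eigen_mu_thm}), while $\mu_{|m|}^2$ and $\mu_{|m|}^3$ have real part $-\mu_m^1/2$. Hence the difference of real parts of $\lambda_n^1$ and $\lambda_m^k$ equals $\mu_{|n|}^1 + \tfrac12\mu_{|m|}^1$. Since $\mu_{|n|}^1$ and $\mu_{|m|}^1$ have the same sign as $M$ (they lie strictly between $0$ and $M$), these two terms do not cancel: $|\mu_{|n|}^1 + \tfrac12 \mu_{|m|}^1| = |\mu_{|n|}^1| + \tfrac12|\mu_{|m|}^1| \ge \tfrac32 \min_k |\mu_k^1|$. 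Then I invoke the lower bound from Lemma \ref{lemma:sir}, $|\mu_k^1| \ge \frac{|M|}{\frac{M^2}{\rho_1}+1}$ — matching the right-hand side of \eqref{eq:diff123} up to the notation $\mu_1$ for $\rho_1$ — and conclude $|\lambda_n^1 - \lambda_m^k| \ge |\mathrm{Re}(\lambda_n^1 - \lambda_m^k)| \ge \frac{3|M|}{\frac{2M^2}{\rho_1}+2}>0$. No information about the imaginary parts is needed here.

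For \eqref{eq:diff11}, within the family $j=1$ the real parts are $\mu_{|n|}^1$ and $\mu_{|m|}^1$ and the imaginary parts are $\sgn(n)c\rho_{|n|}^{\frac{1}{2s}}$ and $\sgn(m)c\rho_{|m|}^{\frac{1}{2s}}$. So $|\lambda_n^1 - \lambda_m^1| \ge |\mathrm{Im}(\lambda_n^1 - \lambda_m^1)| = c\,|\sgn(n)\rho_{|n|}^{\frac{1}{2s}} - \sgn(m)\rho_{|m|}^{\frac{1}{2s}}|$. When $\sgn(n)\neq\sgn(m)$ this is $c(\rho_{|n|}^{\frac{1}{2s}}+\rho_{|m|}^{\frac{1}{2s}})$, which is manifestly positive; when $\sgn(n)=\sgn(m)$ but $|n|\neq|m|$ it is $c\,|\rho_{|n|}^{\frac{1}{2s}}-\rho_{|m|}^{\frac{1}{2s}}|$, positive because the $\rho_k$ are strictly increasing (the eigenvalues are simple for $\tfrac12<s<1$ by Lemma \ref{lemm}(a)). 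In either case this matches the asserted bound $c|\mu_{|n|}^{\frac{1}{2s}} - \mu_{|m|}^{\frac{1}{2s}}|$ once one reads the $\mu$'s there as the $\rho$'s (a notational slip in the statement), and strict positivity follows since $n\neq m$.

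The main obstacle is essentially bookkeeping rather than analysis: one must be careful with the sign conventions in $\sgn(n)$, with the fact that $M$ may be negative (so that ``$\mu_n^1$ between $0$ and $M$'' must be read as an interval with the orientation of $M$), and with the minor notational inconsistency between $\mu_1$/$\mu_{|n|}^{\frac{1}{2s}}$ in the statement and $\rho_1$/$\rho_{|n|}^{\frac{1}{2s}}$ in the spectral formulas. Once these are pinned down, both inequalities drop out of the triangle inequality $|z|\ge|\mathrm{Re}\,z|$ (resp. $|z|\ge|\mathrm{Im}\,z|$) combined with the no-cancellation observations above and the quantitative bounds already established in Lemma \ref{lemma:sir} and Lemma \ref{lemm}.
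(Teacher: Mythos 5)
Your proof is correct and follows essentially the same route as the paper: lower-bound $|\lambda_n^1-\lambda_m^k|$ by the modulus of its real part (using the sign agreement of $\mu_{|n|}^1$ and $\tfrac12\mu_{|m|}^1$ together with the quantitative bound from Lemma \ref{lemma:sir}), and lower-bound $|\lambda_n^1-\lambda_m^1|$ by the modulus of its imaginary part. You also correctly identify the two notational slips in the statement ($\mu_1$ for $\rho_1$, and $\mu_{|n|}^{1/(2s)}$ for $\rho_{|n|}^{1/(2s)}$), and your case split on $\sgn(n)$ versus $\sgn(m)$ is a bit more careful than the paper's one-line computation, which tacitly assumes $\sgn(n)=\sgn(m)$.
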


\begin{proof}
We remark that, if $k\in\{2,3\}$, the numbers  $\operatorname{Re}(\mu_n^1)$ and $\operatorname{Re}(\mu_m^k)$ have opposite signs. By taking into account \eqref{eq:red}, we deduce that
\begin{align*}
	\left|\lambda_n^1-\lambda_m^k\right|\geq\left|\operatorname{Re}\left(\lambda_n^1-\lambda_m^k\right)\right|=\left|\operatorname{Re}\left(\mu_{|n|}^1-\mu_{|m|}^k\right)\right|=\left|\mu_{|n|}^1+\frac{\mu_{|m|}^1}{2}\right|\geq \frac{3|M|}{\frac{2M^2}{\mu_1}+2},
\end{align*}    
and \eqref{eq:diff123} is proved. On the other hand, since $\mu_n^1\in\mathbb{R}$, for all $n\neq m$ we have that
\begin{align*}
    \left|\lambda_n^1-\lambda_m^1\right|\geq \left|\operatorname{Im}\left(\lambda_n^1-\lambda_m^1\right)\right|=c\left|\mu_{|n|}^{\frac{1}{2s}}-\mu_{|m|}^{\frac{1}{2s}}\right|> 0.
\end{align*}
The proof of the lemma is complete.
\end{proof}

Lemma \ref{lemma:dist1} shows that each element of the sequence $(\lambda_n^1)_{n\in\mathbb{Z}^*}$ is well separated from the others elements of the spectrum of $\mathcal{A}_c$. Let us now analyze the case of the other two families. We define the set

\begin{align}\label{spaceV}
	\mathcal V:=\left\{ \sqrt{3\left(\frac{\mu_n^1}{2\rho_n^{\frac{1}{2s}}}\right)^2 + \rho_n^{1-\frac{1}{s}}}\,:\,n\geq 1\right\},
\end{align}
and we will use the notation

\begin{align}\label{eq:noset}
	S=\{(n,j)\,:\,n\in\mathbb{Z}^*,\,\, 1\leq j\leq 3\}.
\end{align} 

\begin{lem}\label{lemma:dist20} 
Let $\frac 12 <s<1$, $\gamma=\gamma(s)\ge \frac{\pi}{2}$ the  constant given in \eqref{Gap} and let $c\in (0,\gamma)\cup (\gamma,\infty)$. Then the following assertions hold.
\begin{enumerate} 
	\item[(a)] If $c\notin{\mathcal V}$, then for each $n\geq 1$ there exists a constant $\upsilon=\upsilon(N,c)>0$ such that 
	
    \begin{equation}\label{eq:diff023} 
	    \left|\lambda_n^j-\lambda_m^k\right|\geq \upsilon(N,c),
	\end{equation}
    for any $(n,j),(m,k)\in S$ with $(n,j)\neq (m,k)$, $1\leq |n|,|m|\leq N$ and $ j,k\in\{2,3\}$.

    \item[(b)] If $c\in {\mathcal V}$, then there exists a unique $n_c\geq 1$ such that 
    \begin{equation}\label{eq:surprise}
	    \lambda_{-n_c}^2=\lambda_{n_c}^3,
	\end{equation} 
	and for each $n\geq 1$  there exists a constant $\upsilon=\upsilon(N,c)>0$ such that 
    \begin{equation}\label{eq:diff023n} 
	    \left|\lambda_n^j-\lambda_m^k\right|\geq \upsilon(N,c),
	\end{equation}
    for any $(n,j),(m,k)\in S$ with $(n,j)\neq (m,k)$, $1\leq |n|,|m|\leq N$ and $ j,k\in\{2,3\}$, with the exception of the cases $(n,j)= (- n_c,2)$ and $(m,k)=(n_c,3)$ given by \eqref{eq:surprise}.
\end{enumerate}
\end{lem}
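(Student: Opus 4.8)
The plan is to reduce everything to a finite computation. Both assertions involve only indices with $1\le|n|,|m|\le N$, so only finitely many eigenvalues of $\mathcal A_c$ are relevant; hence, once we know that every difference $\lambda_n^j-\lambda_m^k$ with $(n,j)\neq(m,k)$ and $j,k\in\{2,3\}$ in this range is nonzero in case (a), respectively every such difference except the single coincidence \eqref{eq:surprise} in case (b), we may simply take $\upsilon(N,c)$ to be the minimum of these finitely many strictly positive numbers. So the real task is to locate exactly which pairs can coincide. First I would record, from \eqref{eq:eigneq} and \eqref{eq:eig}, that for $j\in\{2,3\}$ one has $\operatorname{Re}\lambda_n^j=-\tfrac12\mu_{|n|}^1$ and $\lambda_n^{2},\lambda_n^{3}=-\tfrac12\mu_{|n|}^1+i\left(\sgn(n)\,c\rho_{|n|}^{\frac{1}{2s}}\pm\sqrt{3(\mu_{|n|}^1/2)^2+\rho_{|n|}}\right)$. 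Because $\tfrac12<s<1$, Lemma \ref{lemma:sir} gives that $(|\mu_n^1|)_{n\ge1}$, hence $(\mu_n^1)_{n\ge1}$, is strictly monotone, so $|n|\neq|m|$ already forces $\operatorname{Re}(\lambda_n^j-\lambda_m^k)=\tfrac12(\mu_{|m|}^1-\mu_{|n|}^1)\neq0$. Thus a coincidence can only occur when $|n|=|m|$, i.e. $n=m$ or $n=-m$.

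A short inspection of the imaginary parts then handles each remaining configuration. For $n=m$ with $j\neq k$ the difference equals $\pm2i\sqrt{3(\mu_{|n|}^1/2)^2+\rho_{|n|}}$, of modulus at least $2\sqrt{\rho_1}$; for $n=-m$ with $j=k$ it equals $\pm2i\,c\rho_{|n|}^{\frac{1}{2s}}$, of modulus at least $2c\rho_1^{\frac{1}{2s}}$; and for $n=-m$ with $\{j,k\}=\{2,3\}$ one of the two sign choices yields $\pm2i\left(c\rho_{|n|}^{\frac{1}{2s}}+\sqrt{3(\mu_{|n|}^1/2)^2+\rho_{|n|}}\right)$, of large modulus, while the other equals $\pm2i\left(c\rho_{|n|}^{\frac{1}{2s}}-\sqrt{3(\mu_{|n|}^1/2)^2+\rho_{|n|}}\right)$. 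This last quantity vanishes precisely when $c$ is the $|n|$-th element of the set $\mathcal V$ from \eqref{spaceV}, and then the coinciding pair is exactly $\{(-|n|,2),(|n|,3)\}$, i.e. \eqref{eq:surprise}. So in case (a), where $c\notin\mathcal V$, all the relevant differences are nonzero and the finiteness argument of the first paragraph gives \eqref{eq:diff023}.

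For case (b) it remains to prove uniqueness of $n_c$, equivalently that the elements of $\mathcal V$ are pairwise distinct. I would establish this by showing that $n\mapsto v_n^2:=\rho_n^{1-\frac{1}{s}}+\tfrac34(\mu_n^1)^2\rho_n^{-\frac{1}{s}}$, which is the square of the $n$-th element of $\mathcal V$, is strictly decreasing. Since $(\rho_n)$ is strictly increasing (Lemma \ref{lemm}), it suffices to show that $f(\rho):=\rho^{1-\frac{1}{s}}+\tfrac34\mu(\rho)^2\rho^{-\frac{1}{s}}$ is strictly decreasing on $(0,\infty)$, where $\mu(\rho)\in(0,|M|)$ is the branch of the root of $\mu^3+\rho\mu-|M|\rho=0$ (cf. \eqref{eq:mumod}). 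Implicit differentiation gives $\mu\mu'=\dfrac{(|M|-\mu)^2}{\mu(3|M|-2\mu)}$ and, using $\rho=\mu^3/(|M|-\mu)$, so that $\mu^2/\rho=(|M|-\mu)/\mu$, the inequality $f'(\rho)<0$ becomes $(1-\tfrac1s)+\tfrac32\mu\mu'-\tfrac{3\mu^2}{4s\rho}<0$, which after the substitution $u=\mu/|M|\in(0,1)$ reduces to an elementary inequality holding for all $\tfrac12<s<1$ (the negative term $1-\tfrac1s$ dominates the remainder). Hence $(v_n)$ is strictly decreasing, so $c\in\mathcal V$ singles out a unique $n_c$, no pair other than \eqref{eq:surprise} can coincide, and the finiteness argument now gives \eqref{eq:diff023n}.

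\textbf{The main obstacle.} The classification of coinciding pairs is essentially bookkeeping, but it must be carried out carefully, covering all four sign patterns of $(\sgn n,\sgn m)$ together with all choices of $(j,k)\in\{2,3\}^2$. The genuinely delicate point is the strict monotonicity of $(v_n)$, i.e. the injectivity of $\mathcal V$, needed for the uniqueness of $n_c$: although it ultimately reduces to a one-variable estimate, it cannot be deduced from the mere boundedness of $\mu_n^1$ and requires exploiting its precise dependence on $\rho$.
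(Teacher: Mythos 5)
Your proof is correct and follows the same route as the paper's: separate indices with $|n|\neq|m|$ via the real parts $-\tfrac12\mu_{|n|}^1$ together with the strict monotonicity of $(|\mu_n^1|)_{n\ge1}$ from Lemma \ref{lemma:sir}, then run through the finitely many sign/superscript configurations with $|n|=|m|$ using the imaginary parts, observing that the only difference that can vanish is $\lambda_{-n}^2-\lambda_n^3=2i\bigl(\sqrt{3(\mu_n^1/2)^2+\rho_n}-c\rho_n^{\frac{1}{2s}}\bigr)$, which is zero exactly when $c$ equals the $n$-th element of $\mathcal V$. The one place where you go beyond the paper is the uniqueness of $n_c$: the paper's proof stops at ``the last expression is zero if and only if $c\in\mathcal V$'' and never verifies that the elements of $\mathcal V$ are pairwise distinct, whereas you prove it by showing that $\rho\mapsto\rho^{-1/s}\bigl(\tfrac34\mu(\rho)^2+\rho\bigr)$ is strictly decreasing via implicit differentiation of \eqref{eq:mumod}. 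That computation does close to an inequality valid for all $\tfrac12<s<1$, although your parenthetical explanation is slightly off: as $s\to1$ the term $1-\tfrac1s$ vanishes, and what actually saves the inequality is the elementary bound $\frac{1-u}{3-2u}<\frac12$ comparing the two $u$-dependent terms. So your argument is, if anything, more complete than the one in the paper; the extra monotonicity step is genuinely needed for the word ``unique'' in part (b).
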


\begin{proof} 
For any $(n,j),(m,k)\in S$, we have that  
\begin{align*}
    \left|\lambda_{n}^j-\lambda_{m}^k\right|>\left|\operatorname{Re}(\lambda_{n}^j)-\operatorname{Re}(\lambda_{m}^k)\right|=
    \frac 12\left|\mu_{|n|}^1-\mu_{|m|}^1\right|.
\end{align*}
Since Lemma \ref{lemma:sir} ensures that the sequence of real numbers $\left(|\mu_n^1|\right)_{n\geq 1}$ is increasing, it follows that
\begin{align*}
	\inf \Big\{\left|\lambda_{n}^j-\lambda_{m}^k\right|\,:\, (n,j),(m,k)\in S, \,\, |n|\neq |m|,\,\, 1\leq |n|,|m|\leq N,\,\, 2\leq j,k\leq 3\Big\}>0.
\end{align*}
It remains to study the case $|n|=|m|$. If $j\in\{2,3\}$ and $n\geq 1$, then 
\begin{align*}
    \left|\lambda_{n}^j-\lambda_{-n}^j\right|>\left|\operatorname{Im}(\lambda_{n}^j)-\operatorname{Im}(\lambda_{-n}^j)\right|=2c\rho_{n}^{\frac{1}{2s}}\geq 2c\rho_{1}^{\frac{1}{2s}}>0.
\end{align*}
Moreover, if $n\geq 1$, then
\begin{align*}
    &\left|\lambda_{n}^2-\lambda_{n}^3\right|>\left|\operatorname{Im}(\lambda_{n}^2)-\operatorname{Im}(\lambda_{n}^3)\right|=2\sqrt{3\left(\frac{\mu_n^1}{2}\right)^2+\rho_n}\geq 2\rho_1^{\frac 12}>0
    \\
    &\left|\lambda_{n}^2-\lambda_{-n}^3\right|>\left|\operatorname{Im}(\lambda_{n}^2)-\operatorname{Im}(\lambda_{-n}^3)\right|=2c\rho_n^{\frac{1}{2s}}+2\sqrt{3\left(\frac{\mu_n^1}{2}\right)^2+\rho_n}\geq 2\rho_1^{\frac 12}>0
    \\
    &\left|\lambda_{-n}^2-\lambda_{-n}^3\right|>\left|\operatorname{Im}(\lambda_{-n}^2)-\operatorname{Im}(\lambda_{-n}^3)\right|=2\sqrt{3\left(\frac{\mu_n^1}{2}\right)^2+\rho_n}\geq 2\rho_1^{\frac 12}>0.
\end{align*}
Finally, we remark that
\begin{align*}
    \left|\lambda_{-n}^2-\lambda_{n}^3\right|>\left|\operatorname{Im}(\lambda_{-n}^2)-\operatorname{Im}(\lambda_{n}^3)\right|=\left|-2c\rho_n^{\frac{1}{2s}}+2\sqrt{3\left(\frac{\mu_n^1}{2}\right)^2+\rho_n}\right|.
\end{align*}  
 
The last expression is zero if  and only if there exists $n_c>0$ such that $c=\sqrt{3\left(\frac{\mu_{n_c}^1}{2\rho_{n_c}^{\frac{1}{2s}}}\right)^2 + \rho_{n_c}^{1-\frac{1}{s}}}$, that is, if and only if  $c\in{\mathcal V}$ and the proof is complete.
\end{proof}

\begin{rem} 
{\em 
Lemma \ref{lemma:dist20} shows that, if $c\in{\mathcal V}$, then there exists a unique double  eigenvalue $\lambda_{-n_c}^2$. Its geometric multiplicity is two, since there are two  corresponding linearly independent eigenfunctions $\Psi^2_{- n_c}$ and $\Psi^3_{n_c}$.
}
\end{rem}

Now we analyze the higher part of the spectrum. The properties of the large elements of the sequences $(\lambda_n^2)_{n\in\mathbb{Z}^*}$ and $(\lambda_n^3)_{n\in\mathbb{Z}^*}$ are described in the following lemma.

\begin{lem} \label{lemma:inc} 
Let $\frac 12<s<1$ and $\gamma=\gamma(s)\ge\frac{\pi}{2}$ the constant given in \eqref{Gap}. Then for any $\epsilon>0$, there exists $N_\epsilon^1\in\mathbb{N}$ such that the following assertions hold.
\begin{enumerate}
    \item[(a)] If $c\in(0,\gamma)$, then we have the following.
    \begin{itemize}
        \item The sequences $\left(\operatorname{Im}(\lambda_n^2)\right)_{n\geq 1}$ and $\left(\operatorname{Im}(\lambda_{-n}^2)\right)_{n\geq N_\epsilon^1}$ are increasing and included in the intervals $\left[(1+c)\rho_1^{\frac 12},\infty\right)$ and $\left[\left(1-\frac c\gamma\right)\rho_{1}^{\frac{1}{2}},\infty\right)$, respectively.

        \item The sequences $\left(\operatorname{Im}(\lambda_n^3)\right)_{n\geq N_\epsilon^1}$ and $\left(\operatorname{Im}(\lambda_{-n}^3)\right)_{n\geq 1}$ are decreasing and included in the intervals $\left(-\infty,\left(-1+\frac c\gamma\right)\rho_{1}^{\frac{1}{2}}\gamma\right]$ and $\left(-\infty,-(1+c)\rho_1^{\frac 12}\right]$, respectively.
    \end{itemize}
    Moreover, 
    \begin{equation}\label{eq:dinm1}
	    \begin{array}{ll}
		    \operatorname{Im}(\lambda_{n+1}^2)-\operatorname{Im}(\lambda_{n}^2)=\operatorname{Im}(\lambda_{-n}^3)-\operatorname{Im}(\lambda_{-n-1}^3)\geq c\gamma-\epsilon>0 & \mbox{ for }n\geq N_\epsilon^1, 
	        \\[8pt]
	        \operatorname{Im}(\lambda_{-n-1}^2)-\operatorname{Im}(\lambda_{-n}^2)=\operatorname{Im}(\lambda_{n}^3)-\operatorname{Im}(\lambda_{n+1}^3)\geq \left(1-\frac c\gamma\right)\left(\rho_{n+1}^{\frac 12}-\rho_n^{\frac 12}\right)-\epsilon>0 & \mbox{ for }n\geq N_\epsilon^1,  
	        \\[8pt]
	       \operatorname{Im}(\lambda_{-n}^2)\leq \operatorname{Im}(\lambda_{-N_\epsilon^1}^2),\quad\operatorname{Im}(\lambda_{n}^3)\geq \operatorname{Im}(\lambda_{N_\epsilon^1}^3)  & \mbox{ for }1\leq n\leq N_\epsilon^1.
	        \\[10pt]
        \end{array}
    \end{equation}

    \item[(b)] If $c\in(\gamma,\infty)$, then we have the following.
    \begin{itemize}
        \item The sequences $\left(\operatorname{Im}(\lambda_n^2)\right)_{n\geq 1}$ and $\left(\operatorname{Im}(\lambda_{n}^3)\right)_{n\geq N_\epsilon^1}$ are increasing and included in the intervals $\left[(c+1)\rho_1^{\frac 12},\infty\right)$ and $\left[\left(\frac c\gamma-1\right)\rho_{1}^{\frac{1}{2}},\infty\right)$, respectively.

        \item The sequences $\left(\operatorname{Im}(\lambda_{-n}^2)\right)_{n\geq N_\epsilon^1}$ and $\left(\operatorname{Im}(\lambda_{-n}^3)\right)_{n\geq 1}$ are decreasing and included in the intervals $\left(-\infty,\left(1-\frac c\gamma\right)\rho_{1}^{\frac{1}{2}}\right]$ and $\left(-\infty,-(c+1)\rho_1^{\frac 12}\right]$, respectively.
    \end{itemize}
    Moreover, 
    \begin{equation}\label{eq:dinm2}
        \begin{array}{ll}
	        \operatorname{Im}(\lambda_{n+1}^2)-\operatorname{Im}(\lambda_{n}^2)=\operatorname{Im}(\lambda_{-n}^3)-\operatorname{Im}(\lambda_{-n-1}^3)\geq c\gamma-\epsilon & \mbox{ for }n\geq N_\epsilon^1, 
	        \\[8pt]
	        \operatorname{Im}(\lambda_{-n}^2)-\operatorname{Im}(\lambda_{-n-1}^2)=\operatorname{Im}(\lambda_{n}^3)-\operatorname{Im}(\lambda_{n+1}^3)\geq \left(\frac c\gamma-1\right)\left(\rho_{n+1}^{\frac 12}-\rho_n^{\frac 12}\right)-\epsilon>0 & \mbox{ for }n\geq N_\epsilon^1,
	        \\[8pt]
	      \operatorname{Im}(\lambda_{n}^3)\leq \operatorname{Im}(\lambda_{N_\epsilon^1}^3),\quad \operatorname{Im}(\lambda_{-n}^2)\geq \operatorname{Im}(\lambda_{-N_\epsilon^1}^2)  & \mbox{ for }1\leq n\leq N_\epsilon^1.
        \end{array}
    \end{equation}
    \end{enumerate}
\end{lem}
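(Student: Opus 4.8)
The plan is to prove all the assertions by direct computation: insert the closed forms \eqref{eq:eigneq} and \eqref{eq:eig} for the eigenvalues into the definitions of $\operatorname{Re}$ and $\operatorname{Im}$, and then argue with elementary real analysis, using only the asymptotics \eqref{eq:lambdanp}, the monotonicity of $(|\mu_n^1|)_{n\ge1}$ from Lemma \ref{lemma:sir}, the simplicity of the $\rho_n$ from Lemma \ref{lemm}(a), and the spectral gap \eqref{Gap}. First I would record, for $n\ge 1$,
\begin{align*}
	&\operatorname{Re}(\lambda_{\pm n}^1)=\mu_n^1,\qquad \operatorname{Re}(\lambda_{\pm n}^2)=\operatorname{Re}(\lambda_{\pm n}^3)=-\tfrac{\mu_n^1}{2},\qquad \operatorname{Im}(\lambda_{\pm n}^1)=\pm c\,\rho_n^{\frac1{2s}},
	\\
	&\operatorname{Im}(\lambda_{\pm n}^{2})=\sqrt{3\Big(\tfrac{\mu_n^1}{2}\Big)^{2}+\rho_n}\pm c\,\rho_n^{\frac1{2s}},\qquad
	\operatorname{Im}(\lambda_{\pm n}^{3})=-\sqrt{3\Big(\tfrac{\mu_n^1}{2}\Big)^{2}+\rho_n}\pm c\,\rho_n^{\frac1{2s}},
\end{align*}
so that each sequence occurring in the statement is a fixed $\pm$-combination of the single sequence $a_n:=\sqrt{3(\mu_n^1/2)^2+\rho_n}$ and of $c\,\rho_n^{1/2s}$.

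Two elementary facts then do all the work. First, since $\mu_n^1\to M$ by \eqref{eq:lambdanp}, one has $0\le a_n-\rho_n^{1/2}=\dfrac{3(\mu_n^1/2)^2}{a_n+\rho_n^{1/2}}\to 0$; combined with the strict increase of $(\rho_n)_{n\ge1}$ and of $(|\mu_n^1|)_{n\ge1}$, this shows that $(a_n)_{n\ge1}$ is strictly increasing and that, for every $\epsilon>0$, there is an index beyond which $\bigl|(a_{n+1}-a_n)-(\rho_{n+1}^{1/2}-\rho_n^{1/2})\bigr|<\epsilon$. Second, because $\tfrac1{2s}>\tfrac12$ (here $s<1$ is used), the increments $\rho_{n+1}^{1/2}-\rho_n^{1/2}$ tend to $0$, whereas $\rho_{n+1}^{1/2s}-\rho_n^{1/2s}\ge\gamma$ for $n$ large by \eqref{Gap}; in particular the term $c\,\rho_n^{1/2s}$ dominates $a_n$ and its increments are bounded below by $c\gamma$. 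Feeding these two facts into the difference of two consecutive terms of each of the eight sequences determines the sign of that difference for $n\ge N_\epsilon^1$, with $N_\epsilon^1$ chosen large enough to make the error above smaller than $\epsilon$ and to activate \eqref{Gap}; for the finitely many remaining indices the monotonicity statements that still hold there are read off from the plain increase of $(a_n)_n$ and of $(\rho_n^{1/2s})_n$. The inclusions in the half-lines follow by evaluating each sequence at its first index and invoking the monotonicity just obtained, using $a_1\ge\rho_1^{1/2}$ and $\rho_1^{1/2s}\ge\rho_1^{1/2}$.

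The two displayed chains \eqref{eq:dinm1} and \eqref{eq:dinm2} are the quantitative form of the same computation. In a difference where the two $c\rho^{1/2s}$-parts come with opposite signs — e.g.\ $\operatorname{Im}(\lambda_{n+1}^2)-\operatorname{Im}(\lambda_{n}^2)$, or the twin difference among the $\lambda^3$'s — they add up and one gets $(a_{n+1}-a_n)+c(\rho_{n+1}^{1/2s}-\rho_n^{1/2s})\ge c\gamma-\epsilon$; in a difference where they come with the same sign they survive as $\pm c(\rho_{n+1}^{1/2s}-\rho_n^{1/2s})$ and, after comparing $\rho_{n+1}^{1/2}-\rho_n^{1/2}$ with $\rho_{n+1}^{1/2s}-\rho_n^{1/2s}$ (concavity of $x\mapsto x^{s}$), one obtains bounds of the stated $(1\mp\tfrac c\gamma)(\rho_{n+1}^{1/2}-\rho_n^{1/2})-\epsilon$ type; the equalities between $\lambda^2$- and $\lambda^3$-differences are immediate from $\mu_n^3=\overline{\mu_n^2}$, which makes the imaginary parts opposite. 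The regime $c\in(\gamma,\infty)$ is treated verbatim: the sole change is that $c-\gamma$ now has the opposite sign, which is precisely what interchanges the roles of the $+n$ and $-n$ sequences between cases (a) and (b); the hypothesis $c\ne\gamma$ is exactly what keeps $c\gamma$ and $|1-\tfrac c\gamma|$ from producing a non-strict inequality.

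The main obstacle is organisational rather than mathematical: one must carefully track the eight sign patterns, decide for each whether the corresponding assertion holds from $n=1$ on or only for $n\ge N_\epsilon^1$, and choose $N_\epsilon^1$ to absorb simultaneously the three error sources — the gap between $a_n$ and $\rho_n^{1/2}$, the smallness of $\rho_{n+1}^{1/2}-\rho_n^{1/2}$, and the onset of \eqref{Gap}. No individual estimate is difficult.
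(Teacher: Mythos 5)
Your decomposition and toolkit are essentially identical to the paper's proof: you isolate $a_n=\sqrt{3(\mu_n^1/2)^2+\rho_n}$, control $a_n-\rho_n^{1/2}=\frac{3(\mu_n^1/2)^2}{a_n+\rho_n^{1/2}}$ uniformly via the boundedness and monotonicity of $(|\mu_n^1|)_n$ (this is exactly the paper's choice of $N_\epsilon^1$), invoke the gap \eqref{Gap} for the $c\rho_n^{1/(2s)}$ part, and transfer everything to the $\lambda^3$-sequences through $\overline{\lambda}_n^2=\lambda_{-n}^3$. The case analysis $c\in(0,\gamma)$ versus $c\in(\gamma,\infty)$ is also handled the same way.

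There is, however, one step where your justification does not close, and it is precisely the step you delegate to ``concavity of $x\mapsto x^s$'': the second lines of \eqref{eq:dinm1}--\eqref{eq:dinm2} and the corresponding monotonicity/boundedness claims for $(\operatorname{Im}(\lambda_{-n}^2))_n$ (resp.\ $(\operatorname{Im}(\lambda_{n}^3))_n$). Concavity gives $\rho_{n+1}^{1/2}-\rho_n^{1/2}\le s\,\rho_n^{(s-1)/(2s)}\big(\rho_{n+1}^{1/(2s)}-\rho_n^{1/(2s)}\big)$, i.e.\ the inequality runs in the \emph{wrong} direction for your purpose: it says the $\rho^{1/2}$-increment is $o$ of the $\rho^{1/(2s)}$-increment, so in $\operatorname{Im}(\lambda_{-n-1}^2)-\operatorname{Im}(\lambda_{-n}^2)=-c\big(\rho_{n+1}^{1/(2s)}-\rho_n^{1/(2s)}\big)+(a_{n+1}-a_n)$ the negative term dominates (it is $\le -c\gamma$ by \eqref{Gap}) and the lower bound $\big(1-\frac c\gamma\big)\big(\rho_{n+1}^{1/2}-\rho_n^{1/2}\big)-\epsilon$ cannot be extracted this way; what one would need is the reverse comparison $\rho_{n+1}^{1/(2s)}-\rho_n^{1/(2s)}\le\frac1\gamma\big(\rho_{n+1}^{1/2}-\rho_n^{1/2}\big)$, which holds when $s=1$ but not for $s<1$. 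Indeed your own preliminary observation that $c\rho_n^{1/(2s)}$ dominates $a_n$ already shows $\operatorname{Im}(\lambda_{-n}^2)\to-\infty$, which is in tension with the claimed increase and lower bound. To be fair, the paper's own proof asserts exactly the same bound with exactly the same missing comparison (it is an artifact of adapting the $s=1$ argument of the local case, where $\rho_n^{1/(2s)}=\rho_n^{1/2}$), so you have faithfully reproduced the paper's argument, gap included; but you should not present concavity as supplying that step, since it proves the opposite inequality.
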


\begin{proof}
We first notice that
\begin{equation}\label{eq:leg23}
    \overline{\lambda}_n^2=-i\,\sgn(n)c\rho_{|n|}^{\frac{1}{2s}}-\frac{\mu^1_{|n|}}{2}-i\sqrt{3\left(\frac{\mu^1_{|n|}}{2}\right)^2+n^2}=\lambda_{-n}^3,\qquad n\in\mathbb{Z}^*.
\end{equation}

Hence, we can deduce that $\operatorname{Im}(\lambda^3_{n})=-\operatorname{Im}(\lambda^2_{-n})$.  Since the sequence $\left(\mu_{n}^1\right)_{n\geq 1}$ is bounded (by \eqref{eq:red}), given any $\epsilon>0$ there exists $N_\epsilon^1\in\mathbb{N}$ such that 
\begin{align}\label{eq:neps}
    \frac{\frac{3}{4}\left(\mu^1_{n}\right)^2}{\sqrt{\frac{3}{4}\left(\mu^1_{n}\right)^2+\rho_n}+\rho_n^{\frac 12}}\leq \epsilon,\qquad n\geq N_\epsilon^1.
\end{align}
We analyze the cases $c\in(0,\gamma)$ only, the other one being similar. 
\begin{itemize}
    \item For the sequence $\left(\operatorname{Im}(\lambda_n^2)\right)_{n\geq 1}$, since $\frac 12<s<1$ we have that
    \begin{align*}
        \operatorname{Im}(\lambda_{n}^2)= c\rho_{n}^{\frac{1}{2s}}+\sqrt{3\left(\frac{\mu^1_{|n|}}{2}\right)^2+\rho_n}\geq c\rho_{n}^{\frac{1}{2s}}+\rho_{n}^{\frac{1}{2}}\geq (1+c)\rho_{n}^{\frac{1}{2}}\geq (1+c)\rho_{1}^{\frac{1}{2}}, 
    \end{align*}    
    and, according to \eqref{eq:neps}, for any $n\geq N_\epsilon^1$, it follows that
    \begin{align*}
       \operatorname{Im}&(\lambda_{n+1}^2)-\operatorname{Im}(\lambda_{n}^2) 
        \\
        &=c\left(\rho_{n+1}^{\frac{1}{2s}}-\rho_{n}^{\frac{1}{2s}}\right) +\frac{\frac{3}{4}\left(\mu^1_{n+1}\right)^2}{\sqrt{\frac{3}{4}\left(\mu^1_{n+1}\right)^2+\rho_{n+1}}+\rho_{n+1}^{\frac 12}}-\frac{\frac{3}{4}\left(\mu^1_{n}\right)^2}{\sqrt{\frac{3}{4}\left(\mu^1_{n}\right)^2+\rho_n}+\rho_n^{\frac 12}}+\left(\rho_{n+1}^{\frac 12}-\rho_n^{\frac 12}\right)
	    \\
	    &\geq c\gamma-\epsilon>0,
	\end{align*}
	for some constant $\gamma\geq\frac \pi2$ (see \eqref{Gap}).            
	\item  As for the sequence $\left(\operatorname{Im}(\lambda_{-n}^2)\right)_{n\geq 1}$ we remark that
	\begin{align*}
	    \operatorname{Im}(\lambda_{-n}^2)= -c\rho_{n}^{\frac{1}{2s}}+\sqrt{3\left(\frac{\mu^1_{|n|}}{2}\right)^2+\rho_n}\geq -c\rho_{n}^{\frac{1}{2s}} + \rho_{n}^{\frac{1}{2}}\geq \left(1-\frac c\gamma\right)\rho_{n}^{\frac{1}{2}} \geq \left(1-\frac c\gamma\right)\rho_{1}^{\frac{1}{2}},
	\end{align*}     
	and, similarly as above, for any $n\geq N_\epsilon^1$, we have that
	\begin{align*}
	\operatorname{Im}&(\lambda_{-n-1}^2)-\operatorname{Im}(\lambda_{-n}^2) 
	\\
	&=-c\left(\rho_{n+1}^{\frac{1}{2s}}-\rho_{n}^{\frac{1}{2s}}\right) +\frac{\frac{3}{4}\left(\mu^1_{n+1}\right)^2}{\sqrt{\frac{3}{4}\left(\mu^1_{n+1}\right)^2+\rho_{n+1}}+\sqrt{\rho_{n+1}}}-\frac{\frac{3}{4}\left(\mu^1_{n}\right)^2}{\sqrt{\frac{3}{4}\left(\mu^1_{n}\right)^2+\rho_n}+\sqrt{\rho_n}}+\left(\rho_{n+1}^{\frac 12}-\rho_n^{\frac 12}\right)
	\\
	&\geq \left(1-\frac c\gamma\right)\left(\rho_{n+1}^{\frac 12}-\rho_n^{\frac 12}\right)-\epsilon>0.
	\end{align*}
\end{itemize}
	
Hence, all the desired properties of the sequence  $\left(\operatorname{Im}(\lambda_{n}^2)\right)_{n\in\mathbb{Z}^*}$ are proved in the case $c\in (0,\gamma)$. Since $\operatorname{Im}(\lambda_n^3)=-\operatorname{Im}(\lambda_{-n}^2)$, the properties of the sequence  $\left(\operatorname{Im}(\lambda_{n}^3)\right)_{n\in\mathbb{Z}^*}$  follow immediately from those of  $\left(\operatorname{Im}(\lambda_{n}^2)\right)_{n\in\mathbb{Z}^*}$. The proof is finished.
\end{proof}

Now we  study the possible interactions between the large elements of the sequences $\left(\lambda_{n}^2\right)_{n\in\mathbb{Z}^*}$  and $\left(\lambda_{n}^3\right)_{n\in\mathbb{Z}^*}$.

\begin{lem}\label{lemma.dist23} 
Let $\epsilon>0$ sufficiently small, $\frac 12<s<1$ and $\gamma=\gamma(s)\ge \frac{\pi}{2}$ the constant given in \eqref{Gap}. Then there exist $N_\epsilon\geq 1$ and two positive constants $\delta(c,\epsilon)$ and $\delta'(c,\epsilon)$ with the property that for each $m\geq N_\epsilon$ there exists $n_m\geq N_\epsilon$ such that the following assertions hold.
\begin{enumerate}
    \item[(a)] If $c\in (0,\gamma)$, then
    \begin{equation}\label{eq:di0}
        \frac{1-c}{2}+3\epsilon \geq  \left|\lambda_{m}^2-\lambda_{-n_m}^2\right|=\left|\lambda_{-m}^3-\lambda_{n_m}^3\right|\geq \frac{\delta'(c,\epsilon)}{\rho_m},
    \end{equation}
    and
    \begin{align}\label{eq:di1}
        \inf\left\{\left|\lambda_{m}^2-\lambda_{n}^2\right|\,:\,|n|\geq N_\epsilon,\,\, n\neq -n_m \right\}     =\inf\left\{\left|\lambda_{-m}^3-\lambda_{n}^3\right|\,:\,|n|\geq N_\epsilon,\,\, n\neq n_m \right\}\geq \delta(c,\epsilon).
    \end{align}

    \item[(b)] If $c\in (\gamma,\infty)$, then
    \begin{equation}\label{eq:di3}
        \frac{c-1}{2}+3\epsilon\geq  \left|\lambda_{m}^2-\lambda_{n_m}^3\right|=\left|\lambda_{-m}^3-\lambda_{-n_m}^2\right|\geq \frac{\delta'(c,\epsilon)}{\rho_m},
    \end{equation}
    and
    \begin{align}\label{eq:di4}
        \inf\left\{\left|\lambda_{m}^2-\lambda_{n}^3\right|\,:\,|n|\geq N_\epsilon,\,\, n\neq n_m \right\}        =\inf\left\{\left|\lambda_{-m}^3-\lambda_{n}^2\right|\,:\,|n|\geq N_\epsilon,\,\, n\neq- n_m \right\}\geq \delta(c,\epsilon).
    \end{align}
\end{enumerate}
\end{lem}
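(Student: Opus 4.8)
The strategy is to reduce everything to the imaginary parts, because Lemma~\ref{lemma:inc} already gives us precise control on $\operatorname{Im}(\lambda_n^2)$ and $\operatorname{Im}(\lambda_n^3)=-\operatorname{Im}(\lambda_{-n}^2)$ for large indices. Recall from \eqref{eq:leg23} that $\overline{\lambda}_n^2=\lambda_{-n}^3$, so the two families $(\lambda_n^2)$ and $(\lambda_n^3)$ are complex conjugates of one another after reflecting the index; this is why the two identities in each of \eqref{eq:di0}--\eqref{eq:di4} are equivalent and it suffices to prove the statements about $\lambda_m^2$. First I would fix $c\in(0,\gamma)$ and use part~(a) of Lemma~\ref{lemma:inc}: the sequence $\bigl(\operatorname{Im}(\lambda_n^2)\bigr)_{n\ge 1}$ is increasing with consecutive gaps $\ge c\gamma-\epsilon$ and lies in $[(1+c)\rho_1^{1/2},\infty)$, while $\bigl(\operatorname{Im}(\lambda_{-n}^2)\bigr)_{n\ge N_\epsilon^1}$ is increasing with consecutive gaps $\ge(1-c/\gamma)(\rho_{n+1}^{1/2}-\rho_n^{1/2})-\epsilon$ and lies in $[(1-c/\gamma)\rho_1^{1/2},\infty)$. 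Since $\rho_n^{1/(2s)}$ grows faster than $\rho_n^{1/2}$ for $s<1$, the first sequence outruns the second, so for each large $m$ there is a unique $n_m$ with $\operatorname{Im}(\lambda_{-n_m}^2)$ closest to $\operatorname{Im}(\lambda_m^2)$ from below, and the separation of the interleaved values is controlled by the larger of the two gap sequences, i.e. bounded below by $\delta(c,\epsilon)$. This gives \eqref{eq:di1} once we also rule out small real-part-only coincidences, which is harmless because $\operatorname{Re}(\lambda_m^2)=\operatorname{Re}(\lambda_n^2)=-\mu^1_{|\cdot|}/2$ is essentially constant (tending to $|M|/2$) so the distance is governed entirely by the imaginary parts.

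\textbf{The close pair estimate.} The delicate point is \eqref{eq:di0}: we need \emph{both} an upper bound $\frac{1-c}{2}+3\epsilon$ and a lower bound $\delta'(c,\epsilon)/\rho_m$ on $\left|\lambda_m^2-\lambda_{-n_m}^2\right|$. For the upper bound, I would choose $n_m$ as the best approximant of $m$ in the sense that $c\rho_{n_m}^{1/(2s)}$ is as close as possible to $c\rho_m^{1/(2s)}-(\text{small})$; because consecutive values of $\rho_n^{1/(2s)}$ differ by roughly $\gamma$ (Lemma~\ref{lemm}(b)) while the offset between the $\lambda_m^2$ and $\lambda_{-n}^2$ imaginary parts is $2c\rho^{1/(2s)}$-type, one can arrange $\bigl|\operatorname{Im}(\lambda_m^2)-\operatorname{Im}(\lambda_{-n_m}^2)\bigr|$ to be at most something like $c\gamma/2$ plus the $\epsilon$-errors; a more careful bookkeeping using the precise value $(1+c)\rho_m^{1/2}$ versus $(1-c/\gamma)\rho_{n_m}^{1/2}$ and the asymptotic $\rho_n^{1/(2s)}\sim \gamma n$ yields the clean constant $\frac{1-c}{2}+3\epsilon$. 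For the lower bound, I expand $\lambda_m^2-\lambda_{-n_m}^2$: the real parts contribute $\frac12(\mu_m^1-\mu_{n_m}^1)$, which by \eqref{eq:lambdanp} is $O(1/n^2)$, and the imaginary parts contribute a difference that by construction of $n_m$ is bounded below — here is where I would use that $n_m$ is the \emph{unique} closest index, so the runner-up is at least one full gap away, forcing $\left|\lambda_m^2-\lambda_{-n_m}^2\right|$ to be bounded below by a quantity comparable to $\bigl|\mu_m^1-\mu_{n_m}^1\bigr|$, and the expansion $\mu_n^1=M-M^3/\rho_n+O(n^{-4})$ shows this is $\gtrsim 1/\rho_m$ (the $\rho_{n_m}^{-1}$ and $\rho_m^{-1}$ terms cannot cancel exactly since $\rho_{n_m}\ne\rho_m$, and $\rho_{n_m}\asymp\rho_m$ because $n_m\asymp m$). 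This is the step I expect to be the main obstacle: extracting the $\delta'(c,\epsilon)/\rho_m$ lower bound requires showing that the imaginary-part difference does not conspire to cancel the $O(1/\rho_m)$ real-part difference, i.e. that the two cannot be simultaneously small to higher order, and this needs the quantitative non-resonance built into the definition of $\mathcal V$ and the hypothesis $c\notin\mathcal V$ (or $c\ne$ the special value) together with the strict monotonicity from Lemma~\ref{lemma:sir}.

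\textbf{The case $c>\gamma$.} For part~(b) the roles of the $\lambda^2$ and $\lambda^3$ families swap in the ordering, as recorded in Lemma~\ref{lemma:inc}(b): now it is $\bigl(\operatorname{Im}(\lambda_n^3)\bigr)_{n\ge N_\epsilon^1}$ that is increasing with the faster gaps $\ge c\gamma-\epsilon$ and $\bigl(\operatorname{Im}(\lambda_{-n}^2)\bigr)$ the slower one, so the pairing that produces a small difference is $\lambda_m^2$ with $\lambda_{n_m}^3$ rather than with $\lambda_{-n_m}^2$. The argument is otherwise verbatim the same: choose $n_m$ as the closest approximant, get the upper bound $\frac{c-1}{2}+3\epsilon$ from the asymptotics $\rho_n^{1/(2s)}\sim\gamma n$ and the interval bounds in Lemma~\ref{lemma:inc}(b), get the lower bound $\delta'(c,\epsilon)/\rho_m$ from expanding the real and imaginary parts exactly as before, and get \eqref{eq:di4} from the uniform gap of the runner-up indices. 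Throughout I would take $\epsilon$ small enough (depending on $c$, $\gamma$) that $c\gamma-\epsilon>0$ and $\left(1\mp c/\gamma\right)\rho_1^{1/2}$-type quantities stay positive, and I would absorb all the $O(1/n^4)$ and $\epsilon$-error terms from \eqref{eq:neps} into the stated $3\epsilon$ slack and into the constants $\delta,\delta'$. Finally I would note that $N_\epsilon:=N_\epsilon^1$ (possibly enlarged so that \eqref{eq:neps} holds and $n_m\ge N_\epsilon$ for all $m\ge N_\epsilon$) works uniformly, completing the proof.
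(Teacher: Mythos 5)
Your outline follows the paper's proof almost step for step: the reduction to one family via $\overline{\lambda}_n^2=\lambda_{-n}^3$, the choice of $n_m$ as the best approximant of $(1+c)\rho_m^{1/2}$ by $|1-c/\gamma|\rho_n^{1/2}$ (this is exactly \eqref{eq:nm}), the use of the monotonicity and gap estimates of Lemma \ref{lemma:inc} to get both the upper bound in \eqref{eq:di0} and the uniform separation $\delta(c,\epsilon)$ from all other indices in \eqref{eq:di1}, and the swap of the roles of the $\lambda^2$ and $\lambda^3$ families when $c>\gamma$. The one place where you go astray is precisely the step you single out as ``the main obstacle,'' namely the lower bound $\delta'(c,\epsilon)/\rho_m$. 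There is no cancellation issue between real and imaginary parts: for any complex number $|z|\ge|\operatorname{Re}(z)|$, so the paper simply bounds $\left|\lambda_m^2-\lambda_{-n_m}^2\right|\ge\left|\operatorname{Re}(\lambda_m^2)-\operatorname{Re}(\lambda_{-n_m}^2)\right|=\tfrac12\left|\mu_m^1-\mu_{n_m}^1\right|$ and then invokes \eqref{eq:lambdanp} together with \eqref{eq:nmint}, which forces $\rho_{n_m}$ to be (asymptotically) a fixed multiple $\gamma^2(1+c)^2/(\gamma-c)^2>1$ of $\rho_m$, so that $\tfrac12|\mu_m^1-\mu_{n_m}^1|\ge\tfrac{|M|^3}{2}\big(\rho_m^{-1}-\rho_{n_m}^{-1}\big)-\mathcal O(m^{-4})\ge\delta'(c,\epsilon)/\rho_m$. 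In particular, the condition $c\notin\mathcal V$ that you lean on plays no role here: $\mathcal V$ governs only the single exact low-frequency coincidence $\lambda_{-n_c}^2=\lambda_{n_c}^3$ treated in Lemma \ref{lemma:dist20}, not the asymptotic near-resonances of the present lemma. Since the correct mechanism does appear in your own sentence (the terms $\rho_m^{-1}$ and $\rho_{n_m}^{-1}$ cannot cancel because $\rho_{n_m}\neq\rho_m$ and $n_m\asymp m$), this is a repairable confusion rather than a fatal gap, but as written the ``non-resonance'' justification is the wrong tool, and you should replace it by the elementary inequality $|z|\ge|\operatorname{Re}(z)|$ plus the quantitative separation of $\rho_{n_m}$ from $\rho_m$ supplied by \eqref{eq:nmint}.
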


\begin{proof} 
Firstly, notice that by  \eqref{eq:eig} there exists $N^2_\epsilon$ such that 
\begin{equation}\label{eq:remic}
	\left|\operatorname{Re}(\lambda_n^j)+\frac{M}{2}\right|\leq \frac{\epsilon}{2},\qquad |n|\geq N^2_\epsilon,\,\, j\in\{2,3\}.
\end{equation}
Let $m \geq N_\epsilon:=\max\{N^1_\epsilon,N^2_\epsilon\}$, where $N^1_\epsilon$ is the number given by Lemma \ref{lemma:inc}. Let $n_m\in\mathbb{N}$ be such that
\begin{equation}\label{eq:nm}
	\left|\Big|1-\frac c\gamma\Big|\rho_{n_m}^{\frac 12}-(1+c)\rho_{m}^{\frac 12}\right| =\inf_{n\geq 1}\left|\Big|1-\frac c\gamma\Big|\rho_{n}^{\frac 12}-(1+c)\rho_{m}^{\frac 12}\right|.
\end{equation}
Notice that $n_m$ also verifies
\begin{equation}\label{eq:nmint}
   -\frac{1}{2}+\frac{\gamma(1+c)}{|\gamma-c|}\rho_m^{\frac 12}\leq \rho_{n_m}^{\frac 12}\leq \frac{1}{2}+\frac{\gamma(1+c)}{|\gamma-c|}\rho_m^{\frac 12}.
\end{equation}
We analyze separately the following two cases.

\paragraph{\bf Case 1} Let $c\in(0,\gamma)$. For each $m\geq N_\epsilon$ we have that
\begin{equation}\label{eq:innm}
	\left|\lambda_{m}^2-\lambda_{-n_m}^2\right|=\inf_{n\in \mathbb{Z}^*}\left|\lambda_{m}^2-\lambda_{n}^2\right|,
\end{equation}
with $n_m$ given by \eqref{eq:nm}. Indeed, we have that
\begin{align*}
	\left|\lambda_{m}^2-\lambda_{-n_m}^2\right| \leq &\, \left|\operatorname{Im}(\lambda_{m}^2-\lambda_{-n_m}^2)\right|+\left|\operatorname{Re}(\lambda_{m}^2-\lambda_{-n_m}^2)\right|
	\\
	\leq &\, \left|\left(1-\frac c\gamma\right)\rho_{n_m}^{\frac 12}-(1+c)\rho_{m}^{\frac 12}\right|+\left|(1+c)\rho_{m}^{\frac 12}-\operatorname{Im}(\lambda_{m}^2)\right|	+\left|\operatorname{Im}(\lambda_{-n_m}^2)-\left(1-\frac c\gamma\right)\rho_{n_m}^{\frac 12}\right|
	\\
	&+\left|\operatorname{Re}(\lambda_{m}^2-\lambda_{-n_m}^2)\right|,
\end{align*}
from which, by taking into account \eqref{eq:neps} and \eqref{eq:remic}, we deduce that
\begin{equation}\label{eq:in1nm}
    \left|\lambda_{m}^2-\lambda_{-n_m}^2\right|\leq  \left|\left(1-\frac c\gamma\right)\rho_{n_m}^{\frac 12}-(1+c)\rho_{m}^{\frac 12}\right|+3\epsilon.
\end{equation}
On the other hand, from \eqref{eq:dinm1} and \eqref{eq:neps} we can deduce that
\begin{align*}
    |&\lambda_{m}^2-\lambda_{n}^2| 
    \\
    &\geq \min\left\{\left|\lambda_{m}^2-\lambda_{|n|}^2\right|,\,\,\left|\lambda_{m}^2-\lambda_{-|n|}^2\right| \right\} \geq \min\left\{c\gamma-\epsilon,\,\, \left|\operatorname{Im}(\lambda_{m}^2-\lambda_{-|n|}^2)\right| \right\}
    \\
    &\geq \min\left\{c\gamma-\epsilon,\,\, \left|\left(1-\frac c\gamma\right)\rho_{|n|}^{\frac 12}-(1+c)\rho_{m}^{\frac 12}\right| -\left|(1+c)\rho_{m}^{\frac 12}-\operatorname{Im}(\lambda_{m}^2)\right|-\left|\operatorname{Im}(\lambda_{-|n|}^2)+\left(1-\frac c\gamma\right)\rho_{|n|}^{\frac 12}\right|\right\}
    \\
    &\geq \min\left\{c\gamma-\epsilon,\,\, \left|\left(1-\frac c\gamma\right)\rho_{|n|}^{\frac 12}-(1+c)\rho_{m}^{\frac 12}\right| -2\epsilon\right\},
\end{align*}
which, by taking into account \eqref{eq:nmint}, implies that
\begin{align}\label{eq:in2nm}
    |\lambda_{n}^2-\lambda_{m}^2| \geq \min\left\{\left|\left(1-\frac c\gamma\right)\rho_{n_m}^{\frac 12}-(1+c)\rho_{m}^{\frac 12}\right|+2-\epsilon,\,\, \frac{\gamma-c}{2}+ \left|\left(1-\frac c\gamma\right)\rho_{n_m}^{\frac 12}-(1+c)\rho_{m}^{\frac 12}\right|-2\epsilon\right\}.\notag
    \\
\end{align}
 
From \eqref{eq:in1nm}-\eqref{eq:in2nm} we can deduce that, for $\epsilon$ sufficiently small, \eqref{eq:innm} holds true. It follows that, for each $m\geq N_\epsilon$, we have 
\begin{equation}
    \inf_{n\in\mathbb{Z}^*,\,\, n\neq n_m}\left|\lambda_{m}^2-\lambda_{n}^2\right|\geq \delta(\epsilon,c):=\min\left\{2-\epsilon,\,\, \frac{\gamma-c}{2}-2\epsilon\right\},
\end{equation}
and
\begin{equation}
    \left|\lambda_{m}^2-\lambda_{-n_m}^2\right|\geq \left|\operatorname{Re}(\lambda_{m}^2)-\operatorname{Re}(\lambda_{-n_m}^2)\right|\geq \frac{\delta'(c,\epsilon)}{\rho_m}.
\end{equation}

\paragraph{\bf Case 2}
Let $c\in(\gamma,\infty)$. As before, for $\epsilon$ small enough and for each $m\geq N_\epsilon$, we have that
\begin{equation}
    \left|\lambda_{m}^2-\lambda_{n_m}^3\right|=\inf_{n\in\mathbb{Z}^*}\left|\lambda_{m}^2-\lambda_{n}^3\right|,
\end{equation}
with $n_m$ given by \eqref{eq:nm} and consequently
\begin{equation}
    \inf_{n\in\mathbb{Z}^*}\left|\lambda_{m}^2-\lambda_{n}^3\right|\geq \left|\operatorname{Re}(\lambda_{m}^2)-\operatorname{Re}(\lambda_{n_m}^3)\right|\geq \frac{\delta'(c,\epsilon)}{\rho_m}.
\end{equation}
The rest of the proof is similar to the case $c\in(0,\gamma)$.
\end{proof}

\begin{rem} 
{\em
Lemmas \ref{lemma:dist1}-\ref{lemma.dist23} show that all elements of the spectrum $\sigma(\mathcal A_c)=
\left(\lambda_n^j\right)_{(n,j)\in S}$ are well separated one from another except for the following special cases.

\begin{enumerate}

\item[(a)] If $c\in (0,\gamma)$,  then the eigenvalues $\lambda^2_m$ and $\lambda^2_{-n_m}$ have a distance at least of order $\frac{1}{\rho_m}$ between them and a similar relation holds for $\lambda^3_{m}$ and $\lambda^3_{-n_m}$.

\item[(b)] If $c\in(\gamma,\infty)$, then the eigenvalues $\lambda^2_m$ and $\lambda^3_{n_m}$ have a distance at least of order $\frac{1}{\rho_m}$ between them and a similar relation holds for $\lambda^3_{-m}$ and $\lambda^2_{-n_m}$.

\item[(c)] If $c\in{\mathcal V}$, then there exists a unique double eigenvalues  $\lambda_{-n_c}^2=\lambda_{n_c}^3$.

\end{enumerate}

Even though the asymptotic gap between the elements of the spectrum is equal to zero, the fact that we know the velocities with which the distances between these eigenvalues tend to zero, this will allow us to estimate the norm of the biorthogonal sequence associated to $\left(e^{-\lambda_n^j t}\right)_{(n,j)\in S}$ that we will study in the next section.
}
\end{rem}

\section{Construction of a biorthogonal sequence}\label{bio_sec}

In this section we construct and evaluate a biorthogonal sequence $\left(\theta_m^k\right)_{(m,k)\in S}$ in $L^2\left(-\frac{T}{2},\frac{T}{2}\right)$ associated  to the family of exponential functions $\Lambda=\left(e^{-\lambda_{n}^jt}\right)_{(n,j)\in S},$ where $\lambda_n^j$ are given by \eqref{eq:eigneq}. In order to avoid
the double eigenvalue, which according to Lemma \ref{lemma:dist20} occurs if $c\in\mathcal{V}$, and to keep the notation as simple as possible, we make the convention that, if $c\in\mathcal{V}$, we redefine $ \lambda_{-n_c}^2$ as follows
\begin{align}\label{eq:condob}
	\lambda_{-n_c}^2=-i\,\sgn(n_c)c\rho_{|n_c|}^{\frac{1}{2s}}+i\sqrt{3\left(\frac{\mu_{|n_c|}^1}{2}\right)^2+\rho_{|n_c|}}-\frac{1}{2}i+\frac{\mu_{|n_c|}^1}{2}.
\end{align}

In this way Lemmas \ref{lemma:dist1},  \ref{lemma:dist20} and \ref{lemma.dist23} guarantee that all elements of the family $\left(\lambda_n^j\right)_{(n,j)\in S}$ are different. Since the biorthogonal sequence has the property that
\begin{align*}
	\int_{-\frac{T}{2}}^{\frac{T}{2}}\theta_m^k(t)e^{-\overline{\lambda}_{n}^jt}\,{dt}=\delta_{mk}^{nj},	
\end{align*}
if we define the Fourier transform of $\theta_m^k$,
\begin{align*}
	\widehat{\,\theta}_m^k(z)=\int_{-\frac{T}{2}}^{\frac{T}{2}}\theta_m^k(t)e^{-izt}\,{dt},
\end{align*}
we obtain that
\begin{align}\label{eq:cotr}
	\widehat{\,\theta}_m^k(-i\overline{\lambda}_n^j)=\delta_{mk}^{nj},\qquad (n,j),(m,k)\in S.
\end{align} 
Therefore, we define the infinite product
\begin{align}\label{eq:prod}
	P(z)=z^3\prod_{(n,j)\in S}\left(1+\frac{z}{i\overline{\lambda}_n^j}\right):=z^3\lim_{R\rightarrow\infty}\prod_{\substack{(n,j)\in S\\|\lambda_n^j|\leq R}}\left(1+\frac{z}{i\overline{\lambda}_n^j}\right),
\end{align}
and we study some of its properties in the next theorem. 
We shall prove that the limit in \eqref{eq:prod} exists and defines an entire function. This and other important properties of $P(z)$ are given in the following theorem.

\begin{thm}\label{te:pprod}
Let $\frac 12 <s<1$, $\gamma=\gamma(s)\ge \frac{\pi}{2}$ the constant given in \eqref{Gap}, $c\in\mathbb{R}\setminus\{-\gamma,0,\gamma\}$ and let $P$ be given by \eqref{eq:prod}. Then the following assertions hold.
\begin{enumerate}
	\item[(a)] $P$ is well defined, and it is an entire function of exponential type $\left(\frac{1}{|c|}+\frac{1}{|c+\gamma|}+\frac{1}{|c-\gamma|}\right)\pi$.

	\item[(b)] For each $\delta>0$, there exists a constant $C(\delta)>0$ such that 
	\begin{equation}\label{eq:preal}
		\left|P(z)\right|\leq C(\delta),\qquad z=x+iy,\,\,x,y\in\mathbb{R},\,\,|y|\leq \delta.
	\end{equation}  
	Moreover, there exists a constant $C_1>0$ such that, for any $(m,k)\in S$, 
	\begin{equation}\label{eq:preal2}
		\left|\frac{P(x)}{x+i\overline{\lambda}_m^k}\right| \leq \frac{C_1}{1+\left|x+\operatorname{Im}(\lambda_m^k)\right|},\qquad z=x+iy,\,\,x,y\in\mathbb{R},\,\,|y|\leq \delta.
	\end{equation}

	\item[(c)] Each point $-i\overline{\lambda}_m^j$ is a simple zero of $P$ and there exists a constant $C_2>0$ such that
	\begin{equation}\label{eq:der}
		\left|P'(-i\overline{\lambda}_m^j)\right|\geq \frac{C_2}{\rho_m},\qquad (m,j)\in S.
	\end{equation}
\end{enumerate}
\end{thm}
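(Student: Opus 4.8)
The plan is to build $P$ as a canonical (Hadamard) product over the zero set $\{-i\overline{\lambda}_n^j\}$ and then exploit the spectral information of Section~\ref{spectrum_sect} to read off convergence, exponential type, real-axis bounds, and lower bounds on the derivative at the zeros.

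First I would establish part (a). Writing $-i\overline{\lambda}_n^j = -i\overline{\mu^j_{|n|}} + \operatorname{sgn}(n)c\rho_{|n|}^{1/2s}$ (cf.~\eqref{eq:eigneq}), I would split the three families $j=1,2,3$. For $j=1$ the $\mu^1_n$ are bounded (Lemma~\ref{lemma:sir}) and real, so $-i\overline{\lambda}_n^1 \approx \operatorname{sgn}(n)c\rho_{|n|}^{1/2s}$; since by Lemma~\ref{lemm}(b) the numbers $\rho_n^{1/2s}$ are separated with gap $\ge\gamma$ and grow like $\tfrac{\pi}{2}n$ (the classical Weyl/Kwa\'snicki asymptotics already quoted), the zeros of the $j=1$ factor are, up to a bounded perturbation, $\{\operatorname{sgn}(n)\tfrac{c\pi}{2}n\}$, i.e.\ a sequence with density $\tfrac{1}{|c|\pi}$ (two-sided), giving a sub-product that converges and has exponential type $\tfrac{\pi}{|c|}$ by the standard theory of sine-type / canonical products over such sequences. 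For $j=2,3$, the imaginary parts behave, by Lemma~\ref{lemma:inc}, like $(1+\operatorname{sgn}(n)\tfrac{c}{\gamma})\rho_{|n|}^{1/2}\sim(1\pm\tfrac{c}{\gamma})\tfrac{\pi}{2}|n|$ with bounded real parts converging to $-M/2$, so each of these two sub-products again converges and contributes exponential type $\tfrac{\pi}{|\gamma+c|}$ and $\tfrac{\pi}{|\gamma-c|}$ respectively (here we use $c\neq\pm\gamma$ so the densities are finite). Multiplying the three, and the leading $z^3$, gives a well-defined entire function of exponential type $\bigl(\tfrac{1}{|c|}+\tfrac{1}{|c+\gamma|}+\tfrac{1}{|c-\gamma|}\bigr)\pi$. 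The convergence of the products requires pairing $n$ with $-n$ so that the divergent harmonic-type sums cancel; I would make this explicit by the symmetrization $\bigl(1+\tfrac{z}{i\overline{\lambda}_n^j}\bigr)\bigl(1+\tfrac{z}{i\overline{\lambda}_{-n}^j}\bigr)$ and Taylor-expanding the log.

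Next, for part (b): on the strip $|y|\le\delta$ I would write $|P(x+iy)|$ in terms of $\prod_n \bigl|1 + \tfrac{x+iy}{i\overline{\lambda}_n^j}\bigr|$, and compare it to the model product whose zeros are the arithmetic progressions identified above — essentially a product representation of (shifted, scaled) $\sin$-type functions, which are bounded on horizontal strips. The bounded perturbations (the $\mu^1_n$, the convergent real parts) only change the estimate by a fixed multiplicative constant, by the usual argument that $\prod_n \bigl|\tfrac{x - a_n + \varepsilon_n}{x-a_n}\bigr|$ is bounded when $\sum|\varepsilon_n|/\langle a_n\rangle<\infty$ and the $a_n$ are separated. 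The refined bound \eqref{eq:preal2} is the same estimate after deleting the single factor corresponding to $(m,k)$: dividing by $x+i\overline{\lambda}_m^k$ removes the zero near $-\operatorname{Im}(\lambda_m^k)$, and what remains is $O\bigl(1/(1+|x+\operatorname{Im}(\lambda_m^k)|)\bigr)$ because a sine-type function divided by the linear factor vanishing at one of its zeros decays like the reciprocal distance to that zero along the real axis — uniformly in $m$ since the relevant local geometry (the gap to the neighbouring zeros) is controlled uniformly by Lemmas~\ref{lemma:dist1}, \ref{lemma:dist20}, \ref{lemma:inc} and \ref{lemma.dist23}.

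Finally, for part (c): $P'(-i\overline{\lambda}_m^j) = \tfrac{1}{i\overline{\lambda}_m^j}\,(-i\overline{\lambda}_m^j)^{?}\cdot\prod_{(n,k)\neq(m,j)}\bigl(1 - \tfrac{\overline{\lambda}_m^j}{\overline{\lambda}_n^k}\bigr)$ up to the obvious bookkeeping with the $z^3$ factor, so I must bound this remaining product from below. Away from the ``resonant'' pairs it is bounded below by a positive constant, again by comparison with the model sine-type products and the uniform separation of Lemma~\ref{lemma:dist1} (for the $j=1$ family, which is uniformly isolated) and Lemma~\ref{lemma:inc} (generic separation $\ge c\gamma-\epsilon$ in the $j=2,3$ families). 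The only danger is the single near-collision for each $m$: by Lemma~\ref{lemma.dist23} (or Lemma~\ref{lemma:dist20} in the borderline case $c\in\mathcal V$, handled via the redefinition \eqref{eq:condob}), there is exactly one index $n_m$ with $|\lambda_m^j - \lambda^{k}_{\mp n_m}|$ as small as $\delta'(c,\epsilon)/\rho_m$, contributing a single factor of size $\gtrsim 1/\rho_m$ to the product; all other factors stay uniformly bounded below. Hence $|P'(-i\overline{\lambda}_m^j)| \ge C_2/\rho_m$, which is \eqref{eq:der}.

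\textbf{Main obstacle.} The heart of the argument — and the place I expect to spend most effort — is part (b), specifically the \emph{uniform} (in $m$ and in $x$) estimate \eqref{eq:preal2}. One must simultaneously (i) control the convergence of the infinite product after the single factor is deleted, (ii) show the bounded real-part perturbations and the $O(1/n^3)$-type corrections to $\mu_n^1$ do not accumulate, and (iii) handle the $m$-dependent near-resonant factor so that it contributes a \emph{bounded} (not blowing up) amount when it is \emph{not} the deleted factor, but is accounted for correctly when it is. The cleanest route is to factor $P(z) = z^3 P_1(z)P_2(z)P_3(z)$, prove each $P_j$ is a sine-type function of the appropriate type with a known zero distribution, and invoke the standard lemma that such functions, divided by a linear factor at one of their zeros, decay like the reciprocal of the distance to that zero uniformly along $\mathbb{R}$; the resonances then affect only the product estimate across \emph{different} families and are absorbed using the explicit rates $\delta'(c,\epsilon)/\rho_m$ from Lemma~\ref{lemma.dist23}.
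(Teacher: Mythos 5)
The paper does not actually prove Theorem~\ref{te:pprod}: the authors state that the argument is ``totally analogous'' to the corresponding result for $s=1$ in \cite{biccari2018null} and omit it. Your strategy --- symmetrized canonical products over the three families of zeros, comparison with sine-type model functions on horizontal strips, and isolating the single near-resonant factor of size $\delta'(c,\epsilon)/\rho_m$ from Lemma~\ref{lemma.dist23} to obtain \eqref{eq:der} --- is exactly the strategy of that omitted proof, and your identification of the uniformity in \eqref{eq:preal2} as the crux is accurate.

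There is, however, one step that fails for $\frac12<s<1$ and cannot be imported from the $s=1$ case: the density computation for the families $j=2,3$. You write $\operatorname{Im}(\lambda^{2,3}_n)\sim\bigl(1\pm \tfrac{c}{\gamma}\bigr)\tfrac{\pi}{2}|n|$, which tacitly uses $\rho_n^{1/2}\sim\gamma n$. By the Kwa\'snicki asymptotics already quoted, $\rho_n\sim\bigl(\tfrac{n\pi}{2}\bigr)^{2s}$, so $\rho_n^{1/2s}\sim\tfrac{n\pi}{2}$ grows linearly while $\sqrt{3(\mu_n^1/2)^2+\rho_n}\sim\rho_n^{1/2}\sim\bigl(\tfrac{n\pi}{2}\bigr)^{s}=o(n)$ is sublinear when $s<1$. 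Hence the dominant term in $\operatorname{Im}(\lambda_n^{j})$ is $\operatorname{sgn}(n)\,c\,\rho_{|n|}^{1/2s}$ for \emph{all three} families, all three have the same asymptotic spacing $\tfrac{|c|\pi}{2}$, and the per-family types $\tfrac{\pi}{|c\pm\gamma|}$ that you (following the theorem statement) assign to $j=2,3$ do not follow from the density heuristic. Two further consequences: (i) your model zero sets for $j=2,3$ are arithmetic progressions perturbed by an \emph{unbounded} (though sublinear, and summable against $n^{-2}$) amount of order $n^{s}$, so the bounded-perturbation lemma you invoke for the strip bounds in (b) and for the uniform lower bound on the non-resonant part of the product in (c) does not apply as stated and must be replaced by an argument tolerating $o(n)$ deviations; (ii) even granting your heuristic, a two-sided density $\tfrac{1}{|c|\pi}$ yields exponential type $\tfrac{1}{|c|}$, not $\tfrac{\pi}{|c|}$ (and the actual spacing $\tfrac{|c|\pi}{2}$ yields type $\tfrac{2}{|c|}$). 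None of this harms the controllability application, which only needs a finite upper bound on the type of $P$ together with \eqref{eq:preal}--\eqref{eq:der}; but as written your part (a) does not establish the stated value of the type, and parts (b)--(c) rest on a sine-type comparison whose hypotheses are not met for $s<1$.
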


\begin{proof} 
The proof is based on the spectral analysis presented in Section \ref{spectrum_sect}, and it is totally analogous to the one in \cite[Theorem 5.1]{biccari2018null}. We thus omit it for brevity.
\end{proof}

\noindent The previous theorem allows us to construct the biorthogonal sequence we were looking for.

\begin{thm}\label{te:bio} 
Let $\frac 12 <s<1$, $\gamma=\gamma(s)\ge \frac{\pi}{2}$ the constant given in \eqref{Gap}, $c\in \mathbb{R}\setminus\{-\gamma,0,\gamma\}$ and let the constant $T>2\pi\left( \frac{1}{|c|}+\frac{1}{|c+\gamma|}+\frac{1}{|c-\gamma|}\right)$. Then there exist a biorthogonal sequence $\left(\theta_m^k\right)_{(n,j)\in S}$ associated to the family of complex exponentials $\left(e^{-\lambda_n^j t}\right)_{(n,j)\in S}$ in $L^2\left(-\frac{T}{2},\frac{T}{2}\right)$ and a  constant $C>0$ such that
\begin{equation}\label{eq:biono}
	\left\|\sum_{(m,k)\in S}\beta_m^k\theta_m^k\right\|^2_{L^2\left(-\frac{T}{2},\frac{T}{2}\right)}\leq C \sum_{(m,k)\in S}\rho_{|m|}^2\left|\beta_m^k\right|^2,
\end{equation}
for any finite sequence of complex numbers $\left(\beta_m^k\right)_{(m,k)\in S}$.
\end{thm}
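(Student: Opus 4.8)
The plan is to construct $\theta_m^k$ explicitly via the classical Paley--Wiener/Fourier approach and then estimate its $L^2$-norm using the bounds on $P$ from Theorem~\ref{te:pprod}. More precisely, set
\begin{align*}
	\Theta_m^k(z):=\frac{P(z)}{P'(-i\overline{\lambda}_m^k)\,(z+i\overline{\lambda}_m^k)},\qquad (m,k)\in S,
\end{align*}
which by part (c) of Theorem~\ref{te:pprod} is entire, and by construction satisfies $\Theta_m^k(-i\overline{\lambda}_n^j)=\delta_{mk}^{nj}$. By part (a) of Theorem~\ref{te:pprod}, $P$ has exponential type $\left(\frac{1}{|c|}+\frac{1}{|c+\gamma|}+\frac{1}{|c-\gamma|}\right)\pi=:\tau$, hence so does $\Theta_m^k$ (dividing by the linear factor does not increase the type). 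Since $T/2>\tau$, we may fix $\delta>0$ and a real-variable multiplier to absorb the slack: it is convenient to multiply by a fixed entire function of small exponential type and bounded, integrable trace on $\mathbb{R}$ (e.g. a suitable power of $\big(\tfrac{\sin(\varepsilon z)}{\varepsilon z}\big)$ with $\varepsilon$ chosen so that the total type stays below $T/2$), obtaining an entire function $M_m^k$ of exponential type $\le T/2$ which still interpolates the $\delta_{mk}^{nj}$ data and now lies in $L^2(\mathbb{R})$ along the real axis. Then by the Paley--Wiener theorem $M_m^k$ is the Fourier transform of a function $\theta_m^k\in L^2(-\tfrac T2,\tfrac T2)$, and by construction $\widehat{\theta}_m^k(-i\overline{\lambda}_n^j)=\delta_{mk}^{nj}$, i.e. \eqref{eq:cotr} holds; this is the required biorthogonality.

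It remains to prove the quantitative bound \eqref{eq:biono}. For a finite sequence $(\beta_m^k)$ put $f:=\sum_{(m,k)\in S}\beta_m^k\theta_m^k$. By Plancherel, $\|f\|^2_{L^2(-T/2,T/2)}=\frac{1}{2\pi}\|\widehat f\|^2_{L^2(\mathbb{R})}$ where $\widehat f=\sum_{(m,k)}\beta_m^k M_m^k$. Using the pointwise estimate \eqref{eq:preal2}, for each $(m,k)$ one has
\begin{align*}
	\left|M_m^k(x)\right|\le \frac{C_1}{\left|P'(-i\overline{\lambda}_m^k)\right|}\cdot\frac{\left|W(x)\right|}{1+\left|x+\operatorname{Im}(\lambda_m^k)\right|}\le C\,\rho_{|m|}\cdot\frac{\left|W(x)\right|}{1+\left|x+\operatorname{Im}(\lambda_m^k)\right|},
\end{align*}
where $W$ is the fixed integrable multiplier introduced above and we used \eqref{eq:der} to bound $|P'(-i\overline{\lambda}_m^k)|^{-1}\le C\rho_{|m|}$. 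Hence
\begin{align*}
	\|\widehat f\|_{L^2(\mathbb{R})}\le C\sum_{(m,k)\in S}\rho_{|m|}\,|\beta_m^k|\,\left\|\frac{W(\cdot)}{1+\left|\cdot+\operatorname{Im}(\lambda_m^k)\right|}\right\|_{L^2(\mathbb{R})}.
\end{align*}
Since $W\in L^2\cap L^\infty(\mathbb{R})$, each of these $L^2$-norms is bounded uniformly in $(m,k)$; but a crude triangle-inequality bound only gives an estimate with a loss. To recover the sharp $\ell^2$-type bound \eqref{eq:biono} one argues instead directly: the functions $x\mapsto W(x)/(1+|x+\operatorname{Im}(\lambda_m^k)|)$ have $L^2$-norms that decay in $|\operatorname{Im}(\lambda_m^k)|$, and by the gap/separation properties of $\{\operatorname{Im}(\lambda_m^k)\}$ established in Lemmas~\ref{lemma:dist1}--\ref{lemma.dist23} one shows $\sum_{(m,k)}\big(\text{norm}\big)^2<\infty$ and, more importantly, that the family $\{M_m^k/\rho_{|m|}\}$ is a Bessel sequence in $L^2(\mathbb{R})$ with a uniform Bessel bound. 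Applying the Bessel inequality to $\sum_{(m,k)}(\rho_{|m|}\beta_m^k)\,(M_m^k/\rho_{|m|})$ then yields $\|\widehat f\|^2_{L^2(\mathbb{R})}\le C\sum_{(m,k)}\rho_{|m|}^2|\beta_m^k|^2$, which is \eqref{eq:biono} after dividing by $2\pi$.

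The main obstacle is precisely this last step: establishing the uniform Bessel bound for $\{M_m^k/\rho_{|m|}\}$ in the presence of near-collisions in the spectrum. By the remarks following Lemma~\ref{lemma.dist23}, the imaginary parts $\operatorname{Im}(\lambda_m^k)$ are separated except for pairs (such as $\lambda_m^2,\lambda_{-n_m}^2$, or $\lambda_m^2,\lambda_{n_m}^3$) whose distance may be as small as $O(1/\rho_m)$; for these one must use the fact that the corresponding real parts $\operatorname{Re}(\lambda_m^k)$ differ (the factor $x+i\overline{\lambda}_m^k$ in the denominator of $P$ never actually vanishes for real $x$), so that the two interpolation functions are still ``almost orthogonal'' after the $\rho_{|m|}^{-1}$ rescaling — this is exactly why the weight $\rho_{|m|}^2$, rather than $1$, appears on the right-hand side of \eqref{eq:biono}. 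Once the separated part is handled by a standard Ingham-type / sine-type argument (legitimate because $T/2$ strictly exceeds the exponential type $\tau$ of $P$) and the finitely-overlapping clusters are controlled by a direct $2\times 2$ (or small block) estimate using the determinant lower bounds already computed in Theorem~\ref{te:lari}, the Bessel bound follows and the proof is complete. As the argument is, modulo the fractional-Laplacian eigenvalue asymptotics of Lemma~\ref{lemm}, structurally identical to \cite[Theorem~5.2]{biccari2018null}, we will present the construction and indicate the estimates, referring to that work for the routine parts.
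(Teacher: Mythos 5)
Your construction of the raw interpolating functions $\Theta_m^k(z)=P(z)/\bigl(P'(-i\overline\lambda_m^k)\,(z+i\overline\lambda_m^k)\bigr)$ agrees with the paper's (whose display \eqref{eq:wht} evidently drops the factor $(z+i\overline\lambda_m^j)$ in the denominator---without it $\widehat\theta_m^j$ would vanish at \emph{every} point $-i\overline\lambda_n^k$ and could not satisfy \eqref{eq:cotr}). Where you diverge is the second half. The paper first produces a biorthogonal family in $L^2(-T'/2,T'/2)$, with $T'=2\pi\bigl(\tfrac1{|c|}+\tfrac1{|c+\gamma|}+\tfrac1{|c-\gamma|}\bigr)$, together with the \emph{individual} norm bound $\|\theta_m^j\|_{L^2(-T'/2,T'/2)}\le C\rho_{|m|}$ which follows at once from \eqref{eq:preal2} and \eqref{eq:der}, and then invokes a Kahane-type argument (cf.\ \cite[Proposition 8.3.9]{tucsnak2009observation}) to convert those individual bounds, for any $T>T'$, into a biorthogonal family on $(-T/2,T/2)$ satisfying the quadratic estimate \eqref{eq:biono}. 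You instead introduce a multiplier $W$ by hand and attempt to prove the Bessel bound directly.

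Two issues. First, your motivation for $W$ is off: by \eqref{eq:preal2} the real trace $\Theta_m^k(x)$ decays like $(1+|x+\operatorname{Im}\lambda_m^k|)^{-1}$, which is already in $L^2(\mathbb R)$, so Paley--Wiener applies without any multiplier; the slack $T>T'$ (and hence $W$) is needed for the quadratic estimate, not for $L^2$ membership. More seriously, to preserve the interpolation data you must rescale to $M_m^k=W\,\Theta_m^k/W(-i\overline\lambda_m^k)$; but the points $-i\overline\lambda_m^k$ march off along a horizontal strip ($\operatorname{Re}(-i\overline\lambda_m^k)=-\operatorname{Im}\lambda_m^k\to\pm\infty$ while $\operatorname{Im}(-i\overline\lambda_m^k)=-\operatorname{Re}\lambda_m^k$ stays bounded), so for $W=\bigl(\tfrac{\sin\varepsilon z}{\varepsilon z}\bigr)^k$ the values $W(-i\overline\lambda_m^k)$ tend to zero and the rescaling destroys the $\rho_{|m|}$ bound. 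Choosing a multiplier bounded below on the eigenvalue set is exactly what the Kahane/Tucsnak--Weiss machinery arranges, and it is not automatic. Second, the crucial claim that $\{M_m^k/\rho_{|m|}\}$ is a Bessel sequence with uniform constant is asserted, not proved; ``applying the Bessel inequality to $\sum(\rho_{|m|}\beta_m^k)(M_m^k/\rho_{|m|})$'' is the desired conclusion, not a proof of it. The almost-orthogonality/Schur estimate on the separated part, plus control of the near-collision pairs from Lemma \ref{lemma.dist23}, is feasible but is precisely the content of the lemma the paper cites, and your sketch leaves a genuine gap here. Either quoting \cite[Proposition 8.3.9]{tucsnak2009observation} as the paper does, or carrying out the Schur test and the multiplier normalization in detail, would be required to close it.
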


\begin{proof}
We define the entire function
\begin{equation}\label{eq:wht}
	\widehat{\theta}_m^j(z)=\displaystyle \frac{P(z)}{P'(-i\overline{\lambda}_m^j)},
\end{equation} 
and let
\begin{equation}\label{eq:bt}
	\theta_m^j=\displaystyle \frac{1}{2\pi}\int_\mathbb{R} \widehat{\theta}_m^j(x)e^{ixt}\,dx.
\end{equation} 

It follows from Theorem \ref{te:pprod}  that $(\theta_m^j)_{(m,j)\in S}$ is a biorthogonal sequence associated to the family of exponential functions $\Lambda=\left(e^{-\lambda_{n}^jt}\right)_{(n,j)\in S}$ in $L^2\left(-\frac{T'}{2},\frac{T'}{2}\right)$, where  $T'= 2\pi\left( \frac{1}{|c|}+\frac{1}{|c+\gamma|}+\frac{1}{|c-\gamma|}\right)$. Moreover, we have that
\begin{align*}
	\|\theta_m^j\|_{L^2\left(-\frac{T'}{2},\frac{T'}{2}\right)}\leq C\, \rho_{|m|}, \qquad (m,j)\in S.
\end{align*}

An argument similar to \cite{kahane1962pseudo} (see also \cite[Proposition 8.3.9]{tucsnak2009observation}) allows us to prove that, for any $T>T'$, there exists a biorthogonal sequence  $(\theta_m^j)_{(m,j)\in S}$ associated to   $\Lambda=\left(e^{-\lambda_{n}^jt}\right)_{(n,j)\in S}$ in
$L^2\left(-\frac{T}{2},\frac{T}{2}\right)$ such that  \eqref{eq:biono} is verified. The proof is finished.
\end{proof}

The following immediate consequence of Theorem \ref{te:bio} will be very useful for the controllability problem that we shall study in the next section. 

\begin{cor}
For any finite sequence of scalars $(a_n^j)_{(n,j)\in S}\subset \mathbb{C}$, we have that

\begin{align}\label{eq:insum}
	\sum_{(n,j)\in S}\frac{\left|a_n^j\right|^2 }{\rho_{|n|}^2}\leq  C \norm{\sum_{(n,j)\in S}a_{n}^j e^{-\lambda_n^j t}}{L^2\left(-\frac{T}{2},\frac{T}{2}\right)}^2.
\end{align}
\end{cor}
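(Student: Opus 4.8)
The plan is to deduce \eqref{eq:insum} from the Bessel-type estimate \eqref{eq:biono} by a standard duality argument between the family of exponentials $\Lambda=\left(e^{-\lambda_n^j t}\right)_{(n,j)\in S}$ and its biorthogonal sequence $\left(\theta_m^k\right)_{(m,k)\in S}$ in $L^2\left(-\frac T2,\frac T2\right)$. First I would fix a finite set of indices and a finite family of scalars $(a_n^j)_{(n,j)\in S}\subset\mathbb C$, and set $f:=\sum_{(n,j)\in S}a_n^j e^{-\lambda_n^j t}$, which lies in $L^2\left(-\frac T2,\frac T2\right)$. Using the biorthogonality relation $\int_{-T/2}^{T/2}\theta_m^k(t)\,\overline{e^{-\lambda_n^j t}}\,dt=\delta_{mk}^{nj}$ — more precisely its conjugate, so that $\int_{-T/2}^{T/2} e^{-\lambda_n^j t}\,\overline{\theta_m^k(t)}\,dt=\delta_{mk}^{nj}$ — one recovers each coefficient as an inner product:
\begin{align*}
	a_m^k=\int_{-\frac T2}^{\frac T2} f(t)\,\overline{\theta_m^k(t)}\,dt=\big\langle f,\theta_m^k\big\rangle_{L^2\left(-\frac T2,\frac T2\right)}.
\end{align*}

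Next I would estimate the weighted sum $\sum_{(m,k)\in S}\rho_{|m|}^{-2}\,|a_m^k|^2$. Writing $a_m^k=\big\langle f,\theta_m^k\big\rangle$ and pulling the weight inside, this sum equals $\big\langle f,\ \sum_{(m,k)\in S}\rho_{|m|}^{-2}\,\overline{a_m^k}\,\theta_m^k\big\rangle$, so by Cauchy–Schwarz in $L^2\left(-\frac T2,\frac T2\right)$,
\begin{align*}
	\sum_{(m,k)\in S}\frac{|a_m^k|^2}{\rho_{|m|}^2}\le \|f\|_{L^2\left(-\frac T2,\frac T2\right)}\ \Big\|\sum_{(m,k)\in S}\frac{\overline{a_m^k}}{\rho_{|m|}^2}\,\theta_m^k\Big\|_{L^2\left(-\frac T2,\frac T2\right)}.
\end{align*}
Now apply \eqref{eq:biono} with $\beta_m^k:=\rho_{|m|}^{-2}\,\overline{a_m^k}$: the second factor on the right is bounded by $\big(C\sum_{(m,k)\in S}\rho_{|m|}^2\,|\beta_m^k|^2\big)^{1/2}=\big(C\sum_{(m,k)\in S}\rho_{|m|}^{-2}\,|a_m^k|^2\big)^{1/2}$. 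Denoting $A:=\sum_{(m,k)\in S}\rho_{|m|}^{-2}\,|a_m^k|^2$, the displayed inequality becomes $A\le \sqrt C\,\|f\|_{L^2\left(-\frac T2,\frac T2\right)}\,\sqrt A$; dividing by $\sqrt A$ (the case $A=0$ being trivial) and squaring yields $A\le C\,\|f\|_{L^2\left(-\frac T2,\frac T2\right)}^2$, which is precisely \eqref{eq:insum} after re-labelling $(m,k)$ as $(n,j)$ and $f$ as $\sum_{(n,j)\in S}a_n^j e^{-\lambda_n^j t}$. Finally, the estimate passes from finite sequences to all $\ell^2$-type sequences by density, if needed, though the statement as given only asks for finite sequences.

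The argument is essentially routine once Theorem \ref{te:bio} is in hand; there is no real obstacle beyond bookkeeping. The one point requiring a little care is the placement of complex conjugates: the biorthogonality in \eqref{eq:cotr} is stated against $e^{-\overline{\lambda}_n^j t}$, so one must be consistent about whether $\theta_m^k$ is biorthogonal to $\left(e^{-\lambda_n^j t}\right)$ or to $\left(e^{-\overline{\lambda}_n^j t}\right)$, and choose the conjugate of $a_m^k$ in the test combination $\sum \beta_m^k\theta_m^k$ accordingly so that the pairing reproduces $\sum \rho_{|m|}^{-2}|a_m^k|^2$ with the correct sign. The inequality \eqref{eq:biono} is invariant under conjugating the $\beta_m^k$, so this causes no difficulty. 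This corollary will be the lower (observability-type) bound feeding the moment method in Section \ref{control_sect}.
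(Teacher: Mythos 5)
Your argument is correct and is essentially the paper's own proof: pair $f=\sum a_n^j e^{-\lambda_n^j t}$ against the weighted combination of the biorthogonal functions $\theta_m^k$, apply Cauchy--Schwarz, invoke \eqref{eq:biono}, and divide by $\sqrt{A}$. The only quibble is the placement of the conjugate in the test combination (one needs $\beta_m^k=\rho_{|m|}^{-2}a_m^k$ rather than $\rho_{|m|}^{-2}\overline{a_m^k}$ for the pairing to produce $\sum\rho_{|m|}^{-2}|a_m^k|^2$), but as you note yourself \eqref{eq:biono} is insensitive to this, so the proof stands.
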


\begin{proof} 
We have that
\begin{align*}
	\sum_{(n,j)\in S}\frac{\left|a_n^j\right|^2}{\rho_{|n|}^2} &= \int_{-\frac{T}{2}}^{\frac{T}{2}}
	\overline{\left( \sum_{(n,j)\in S} a_n^j e^{-\lambda_n^j t}\right)}\left( \sum_{(m,k)\in S} \frac{a_m^k }{\rho_{|m|}^2}\theta_m^k(t)\right)\,{ dt} 
	\\
	&\leq \norm{\sum_{(n,j)\in S}a_{n}^j e^{-\lambda_n^j t}}{L^2\left(-\frac{T}{2},\frac{T}{2}\right)}\, \norm{\sum_{(m,k)\in S}\frac{a_{m}^k}{\rho_{|m|}^2}\theta_m^k}{L^2\left(-\frac{T}{2},\frac{T}{2}\right)},
\end{align*}
from which, by taking into account \eqref{eq:biono}, we easily deduce \eqref{eq:insum}. 
\end{proof}

\section{Proof of the main result}\label{control_sect}

In this section we study the controllability properties of Equation \eqref{wave_mem} by proving the main result of the paper.
 To do this we need further preparation. Firstly, we shall reduce our original problem to a moment problem. Secondly, we will solve the moment problem with the help of the biorthogonal sequence that we have constructed in Section \ref{bio_sec}. Let us begin with the following result concerning the solutions of \eqref{wave_mem_adj_syst_cv}.

\begin{lem}\label{sol_adj_lemma}
For each initial data 
\begin{align}\label{in_data_adj}
	\left(\begin{array}{c}
		\varphi(T,x)\\ \varphi_t(T,x)\\ \psi(T,x)
	\end{array}\right) = \left(\begin{array}{c}
	\varphi^0(x)\\ \varphi^1(x)\\ \psi^0(x)
	\end{array}\right) = \sum_{(n,j)\in S} b_n^{\,j}\Psi_n^j(x) \in L^2(\I)\times\mathbb H_s^{-1}(\I)\times \mathbb H_s^{-1}(\I), 
\end{align} 
there exists a unique solution of Equation \eqref{wave_mem_adj_syst_cv} given by  	
\begin{align}\label{sol_adj}
	\left(\begin{array}{c} \varphi(t,x)\\ \varphi_t(t,x)\\ \psi(t,x)\end{array}\right) = \sum_{(n,j)\in S} b_n^{\,j}e^{\lambda_n^j(T-t)}\Psi_n^j(x).
\end{align}
\end{lem}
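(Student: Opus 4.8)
The plan is to verify directly that the series in \eqref{sol_adj} defines a function in the right energy class and satisfies \eqref{wave_mem_adj_syst_cv}, exploiting that the eigenvectors $\Psi_n^j$ are (after rescaling by $\rho_{|n|}^\sigma$) a Riesz basis by Theorem \ref{te:lari}. First I would observe that, since $\mathcal A_c\Psi_n^j=\lambda_n^j\Psi_n^j$, the function $W(t):=\sum_{(n,j)\in S} b_n^{\,j}e^{\lambda_n^j(T-t)}\Psi_n^j$ is the natural candidate for the solution of the backward Cauchy problem \eqref{eq:memchanged}, namely $W(t)=e^{(T-t)(-\mathcal A_c)}W^0$. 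The first component of $W$ should be $\varphi$, the second $-\varphi_t$ (up to the sign convention inherited from the definition of $\mathcal A_c$, so that actually $\varphi_t$ appears as the negative of the middle entry), and the third $\psi$; one then reads off \eqref{sol_adj}. So the content of the lemma is (i) convergence of the series in the appropriate space for every $t\in[0,T]$, (ii) the regularity $\varphi\in C([0,T];L^2(\I))$ etc., and (iii) that $W$ actually solves the differential equation with the prescribed terminal data.

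For the convergence and regularity, I would fix $\sigma\ge 1$ large enough that $W^0\in X_{-\sigma}$ (the terminal data in \eqref{in_data_adj} lies in $L^2\times\mathbb H_s^{-1}\times\mathbb H_s^{-1}\hookrightarrow X_{-\sigma}$ by the embeddings \eqref{Cont-Emb}), and use that $(\rho_{|n|}^\sigma\Psi_n^j)$ is a Riesz basis of $X_{-\sigma}$: this gives $\sum_{(n,j)} \rho_{|n|}^{2\sigma}|b_n^{\,j}|^2<\infty$. Then for each $t$,
\begin{align*}
	\norm{W(t)}{X_{-\sigma}}^2 \le C\sum_{(n,j)\in S}\rho_{|n|}^{2\sigma}\big|b_n^{\,j}\big|^2\,\big|e^{\lambda_n^j(T-t)}\big|^2,
\end{align*}
and since the real parts $\operatorname{Re}(\lambda_n^j)=\operatorname{Re}(\mu_{|n|}^j)$ are uniformly bounded by Theorem \ref{eigen_mu_thm} (they tend to $M$ or $-M/2$ and stay in a bounded set), the exponential factors are bounded uniformly in $n$ and in $t\in[0,T]$; hence the series converges in $X_{-\sigma}$ uniformly on $[0,T]$, giving $W\in C([0,T];X_{-\sigma})$. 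Termwise differentiation in $t$ produces the factor $-\lambda_n^j$, which is $O(\rho_{|n|}^{1/2s})=o(\rho_{|n|})$, so $W'\in C([0,T];X_{-\sigma-1})$; combined with $\mathcal A_c W(t)=\sum b_n^{\,j}\lambda_n^j e^{\lambda_n^j(T-t)}\Psi_n^j = -W'(t)$, this shows $W$ is a (mild, in fact strong in the scale of spaces) solution of \eqref{eq:memchanged}, and $W(T)=W^0$ follows by setting $t=T$. Translating the three components back through the identifications preceding \eqref{eq:memchanged} yields \eqref{sol_adj}. Uniqueness follows because any two solutions in $C([0,T];X_{-\sigma})$ differ by a solution with zero terminal data, which must vanish identically either by a standard energy/Gr\"onwall argument applied to the equivalent second-order system \eqref{wave_mem_adj_syst_cv} or by expanding in the Riesz basis and noting each Fourier coefficient solves a scalar ODE with zero terminal value.

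The main obstacle, I expect, is bookkeeping rather than a deep difficulty: one must be careful about \emph{which} function space each component of $W(t)$ lands in, since the three entries of $X_{-\sigma}$ are $\mathbb H_1^{-\sigma}(\I)\times\mathbb H_1^{-\sigma-1}(\I)\times\mathbb H_s^{-\sigma}(\I)$ (note the mix of the Laplacian-based and fractional-Laplacian-based scales caused by the change of variables \eqref{cv}), and to match the claimed regularity $\varphi\in L^2(\I)$, $\varphi_t\in\mathbb H_s^{-1}(\I)$, $\psi\in\mathbb H_s^{-1}(\I)$ stated implicitly in \eqref{in_data_adj} one may need to observe that for data as regular as in \eqref{in_data_adj} the series converges in a better space than the generic $X_{-\sigma}$ — concretely, one repeats the Riesz-basis estimate at the level $\sigma$ for which $(\rho_{|n|}^\sigma\Psi_n^j)$ is a Riesz basis of the precise space $L^2(\I)\times\mathbb H_s^{-1}(\I)\times\mathbb H_s^{-1}(\I)$, which is again covered by Theorem \ref{te:lari}. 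A secondary point is justifying termwise differentiation, which is immediate from the local uniform convergence of the differentiated series in $X_{-\sigma-1}$ established above. None of these steps requires anything beyond Theorems \ref{eigen_mu_thm} and \ref{te:lari} and the embeddings \eqref{Cont-Emb}, so the proof is essentially a verification.
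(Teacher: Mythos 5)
Your proposal is correct and takes essentially the same route as the paper, which simply expands the terminal data in the Riesz basis of Theorem \ref{te:lari} and notes that each mode evolves as $e^{\lambda_n^j(T-t)}\Psi_n^j$; your convergence, termwise-differentiation and uniqueness details only flesh out what the paper's two-line proof leaves implicit. (Two harmless slips: the first line of $W'+\mathcal A_c W=0$ gives $\eta=+\varphi_t$, so the middle entry of $W$ is $\varphi_t$ itself rather than its negative, and the solution operator is $e^{(T-t)\mathcal A_c}$, not $e^{(T-t)(-\mathcal A_c)}$.)
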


\begin{proof}
Since  $(\Psi_n^j)_{(n,j)\in S}$ is a Riesz basis (by Theorem \ref{te:lari}), it follows that each initial data in $ L^2(\I)\times\mathbb H_s^{-1}(\I)\times \mathbb H_s^{-1}(\I)$ can be written in the form \eqref{in_data_adj} with $(b_n^{\,j})_{(n,j)\in S}\in\ell^2$. We remark that, for $(n,j)\in S$, if we take as initial data $\Psi_n^j$, then the solution to \eqref{wave_mem_adj_syst_cv} is given by 
\begin{align}\label{sol_adj_expr}
	\left(\begin{array}{c} \varphi(t,x)\\ \varphi_t(t,x)\\ \psi(t,x)\end{array}\right) = e^{\lambda_n^j(T-t)}\Psi_n^j(x).
\end{align}
The proof is finished.
\end{proof}

We have the following result which reduces the controllability problem to a problem of moments.

\begin{lem}\label{moment_lemma}
Let $\sigma\ge 0$. Then Equation \eqref{wave_mem_syst_cv} is memory-type null controllable at time $T$ if and only if for each $(y^0,y^1)\in\mathbb H_s^{\sg+1}(\I)\times\mathbb H_s^{\sg}(\I)$,  
\begin{align}\label{data_dec}
	y^0(x) = \sum_{n\in\ZZ^*} y_0^ne^{i\,\sgn(n)\rho_{|n|}^{\frac{1}{2s}}x},\quad y^1(x) = \sum_{n\in\ZZ^*} y_1^ne^{i\,\sgn(n)\rho_{|n|}^{\frac{1}{2s}}x},
\end{align}
there exists $\widetilde{u}\in L^2(Q)$ such that 
\begin{align}\label{moment_pb}
	\int_0^T\int_{\omega_0} \tilde{u}(t,x)e^{-i\,\sgn(n)\rho_{|n|}^{\frac{1}{2s}}x}e^{-\bar{\lambda}_n^jt}\,dxdt = -2\Big(\bar{\mu}_{|n|}^jy_n^0 + y_n^1\Big),\qquad (n,j)\in S.
\end{align} 
\end{lem}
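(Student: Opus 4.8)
The plan is to transpose the controllability problem to the adjoint system and then to \emph{diagonalise}. By Lemma~\ref{control_id_lemma_syst} (and its analogue at the regularity level $\sigma$, obtained by the same integration-by-parts argument in the spaces $\mathbb H_s^{\sigma+1}(\I)\times\mathbb H_s^\sigma(\I)$ and their duals), the system \eqref{wave_mem_syst_cv} is memory-type null controllable at time $T$ if and only if, for the given $(y^0,y^1)$, there is a control $\tilde u$ such that the weak identity \eqref{control_id_cv} holds for \emph{every} admissible adjoint datum $(\varphi^0,\varphi^1,\psi^0)$. Since, after the suitable scaling, the eigenvectors $(\Psi_n^j)_{(n,j)\in S}$ of $\mathcal A_c$ form a Riesz basis of the adjoint-data space (Theorem~\ref{te:lari}) and both sides of \eqref{control_id_cv} are continuous linear functionals of that datum, it suffices to impose \eqref{control_id_cv} when the adjoint datum is a single eigenvector $\Psi_n^j$, $(n,j)\in S$. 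The proof then reduces to evaluating both sides of \eqref{control_id_cv} on these distinguished data and matching the outcome with \eqref{moment_pb}.

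Fix $(n,j)\in S$ and take $(\varphi^0,\varphi^1,\psi^0)=\Psi_n^j$. By Lemma~\ref{sol_adj_lemma} (and Theorem~\ref{te:lari}) the corresponding solution of \eqref{wave_mem_adj_syst_cv} is $(\varphi,\varphi_t,\psi)(t,x)=e^{\lambda_n^j(T-t)}\Psi_n^j(x)$, so reading off the first two components of $\Psi_n^j$ from \eqref{eq:eigfnew},
\begin{align*}
	\varphi(t,x)=e^{\lambda_n^j(T-t)}e^{i\,\sgn(n)\rho_{|n|}^{\frac{1}{2s}}x},\qquad \varphi_t(0,x)=-\lambda_n^j\, e^{\lambda_n^jT}e^{i\,\sgn(n)\rho_{|n|}^{\frac{1}{2s}}x}.
\end{align*}
Substituting into the left-hand side of \eqref{control_id_cv} and writing $\overline{e^{\lambda_n^j(T-t)}}=e^{\overline{\lambda}_n^jT}e^{-\overline{\lambda}_n^jt}$ produces $e^{\overline{\lambda}_n^jT}$ times the double integral appearing on the left of \eqref{moment_pb}. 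On the right-hand side, I would integrate by parts in $x$ in the term $c\int_\I y^0_x\,\overline{\varphi(0,x)}\,dx$: the boundary contributions vanish because $y^0\in\mathbb H_s^1(\I)=H_0^s(\overline\I)$ and $\frac12<s<1$, so $y^0$ is continuous and vanishes on $\partial\I$. This turns the right-hand side into
\begin{align*}
	-e^{\overline{\lambda}_n^jT}\left[\Big(\overline{\lambda}_n^j+i\,\sgn(n)c\,\rho_{|n|}^{\frac{1}{2s}}\Big)\int_\I y^0(x)e^{-i\,\sgn(n)\rho_{|n|}^{\frac{1}{2s}}x}\,dx+\int_\I y^1(x)e^{-i\,\sgn(n)\rho_{|n|}^{\frac{1}{2s}}x}\,dx\right],
\end{align*}
and by \eqref{eq:eigneq} the scalar prefactor $\overline{\lambda}_n^j+i\,\sgn(n)c\,\rho_{|n|}^{\frac{1}{2s}}$ collapses to $\overline{\mu}_{|n|}^j$. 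Finally, using the expansions \eqref{data_dec} — together with the convention, already in force in Section~\ref{spectrum_sect} (cf.\ the computation \eqref{eq:ri1cv}), that $\big(e^{i\,\sgn(n)\rho_{|n|}^{\frac{1}{2s}}x}\big)_{n\in\mathbb Z^*}$ is treated as an orthogonal system on $\I$ with square-norm $|\I|=2$ — the two $x$-integrals equal $2y_n^0$ and $2y_n^1$. Cancelling the nonzero factor $e^{\overline{\lambda}_n^jT}$ and equating the two sides shows that \eqref{control_id_cv} for the datum $\Psi_n^j$ is exactly the moment equation \eqref{moment_pb}; since every manipulation is reversible, this yields the claimed equivalence.

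The computation above is routine arithmetic; the delicate point is the passage from ``\eqref{control_id_cv} for all adjoint data'' to ``\eqref{control_id_cv} for each $\Psi_n^j$''. One has to verify that both functionals in \eqref{control_id_cv} are continuous in the topology for which $(\rho_{|n|}^\sigma\Psi_n^j)_{(n,j)\in S}$ is a Riesz basis, so that testing on the basis is equivalent to testing on arbitrary data: on the right-hand side this is just the duality pairing between $\mathbb H_s^{\sigma+1}(\I)$ and $\mathbb H_s^{-\sigma-1}(\I)$ (resp.\ $\mathbb H_s^\sigma(\I)$ and $\mathbb H_s^{-\sigma}(\I)$), whereas on the left-hand side it rests on $\tilde u\in L^2$ together with the $L^2$-in-time, finite-energy regularity of $\varphi$ furnished by the well-posedness of \eqref{wave_mem_adj_syst_cv}, which legitimates interchanging summation and integration in $\varphi=\sum_{(n,j)\in S}b_n^j e^{\lambda_n^j(T-t)}\Psi_n^j$. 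The only remaining, purely bookkeeping, issue is to keep the normalisation of the exponential family $\big(e^{i\,\sgn(n)\rho_{|n|}^{\frac{1}{2s}}x}\big)_{n\in\mathbb Z^*}$ consistent with the definition of the Fourier coefficients $y_n^0,y_n^1$ on the right of \eqref{moment_pb}, which is exactly what produces the factor $-2$ there.
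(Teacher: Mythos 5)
Your proposal is correct and follows essentially the same strategy as the paper: invoke Lemma~\ref{control_id_lemma_syst}, restrict the test data to the Riesz basis $(\Psi_n^j)_{(n,j)\in S}$, substitute the explicit solution formula from Lemma~\ref{sol_adj_lemma}, and read off the moment conditions. The only deviation is bookkeeping: where the paper computes $\int_\I(y^1+cy^0_x)\bar\varphi(0,x)\,dx$ directly by expanding $y^0_x=\sum_n i\,\sgn(n)\rho_{|n|}^{1/(2s)}y_0^n e^{i\,\sgn(n)\rho_{|n|}^{1/(2s)}x}$ term by term and invoking the same orthogonality convention, you integrate by parts in $x$ to shift the derivative from $y^0$ onto $\bar\varphi(0,\cdot)$; both routes produce the prefactor $\overline{\lambda}_n^j+i\,\sgn(n)c\rho_{|n|}^{1/(2s)}=\overline{\mu}_{|n|}^j$ and the same final identity. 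Your explicit remark that all steps are reversible (needed for the ``if and only if'') and that one must check continuity of the two functionals before testing only on basis elements makes the argument somewhat cleaner than the paper's terse version. One small slip: you write $\mathbb H_s^1(\I)=H_0^s(\overline\I)$, but the paper only identifies $\mathbb H_s^{1/2}(\Omega)=H_0^s(\overline\Omega)$; what you actually need is the inclusion $\mathbb H_s^1(\I)\subset\mathbb H_s^{1/2}(\I)=H_0^s(\overline\I)$, which is true and suffices for the vanishing of the boundary terms, so the conclusion is unaffected.
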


\begin{proof} 
Recall that, by Lemma \ref{control_id_lemma_syst}, Equation \eqref{wave_mem_syst_cv} is memory-type null controllable in time $T$ if and only if it holds the identity

\begin{align}\label{control_id_cv_2}
	\int_0^T\int_{\omega_0} \tilde{u}(t,x)\bar{\varphi}(t,x)\,dxdt = \big\langle y^0(\cdot),\varphi_t(0,\cdot)\big\rangle_{\mathbb H_s^{1}(\I),\mathbb H_s^{-1}(\I)} - \int_{\I} (y^1(x)+cy^0_x(x))\bar{\varphi}(0,x)\,dx,
\end{align}
where, for $(p^0,p^1,q^0)\in L^2(\I)\times\mathbb H_s^{-1}\times\mathbb H_s^1(\I)$, $(\varphi,\psi)$ is the unique solution to \eqref{wave_mem_adj_syst_cv}. In fact, it is sufficient to consider as initial data the elements of the Riesz basis $\left\{\Psi_n^j\right\}_{(n,j)\in S}:$
\begin{align*}
	\left(\begin{array}{c} p^0\\[5pt]p^1-cp_x^0\\[5pt]q^0\end{array}\right) = \Psi_n^j = \left(\begin{array}{c} 1\\[5pt]-\lambda_n^j\\[5pt] \displaystyle \frac{1}{\lambda_n^j-i\,\sgn(n)c\rho_{|n|}^{\frac{1}{2s}}}\end{array}\right)e^{i\,\sgn(n)\rho_{|n|}^{\frac{1}{2s}}x}.
\end{align*}

Then, by Lemma \ref{sol_adj_lemma}, the solution to \eqref{wave_mem_adj_syst_cv} can be written in the form \eqref{sol_adj_expr}. Moreover, we can readily check that 
\begin{align*}
    \int_{\I}\big(y^1+cy^0_x\big)\bar{\varphi}(0,x)\,dx = 2\left(y_n^1+i\,\sgn(n)c\rho_{|n|}^{\frac{1}{2s}} y_n^0\right)e^{\bar{\lambda}_n^jT},
\end{align*}
where we have used the orthogonality of the eigenfunctions in $L^2(\I)$. In a similar way, we also have
\begin{align*}
    \big\langle y^0,\varphi_t(0,\cdot)\big\rangle_{\mathbb H_s^{1}(\I),\mathbb H_s^{-1}(\I)} = -\sum_{(n,j)\in S}y_n^0\bar{\lambda}_n^je^{\bar{\lambda}_n^jT},
\end{align*}
and from \eqref{control_id_cv_2} we finally obtain \eqref{moment_pb}. The proof is finished.
\end{proof} 

In order to solve the moment problem \eqref{moment_pb}, we shall use the following result (see \cite[Chapter 4, Section 1, Theorem 2]{young2001introduction}).

\begin{thm}\label{thm_young}
Let $(f_n)_n$ be a sequence belonging to a Hilbert space $H$ and $(c_n)_n$ a sequence of scalars. In order that the equations
\begin{align*}
	(f,f_n)_H=c_n
\end{align*}
admit at least one solution $f\in H$ satisfying $\norm{f}{H}\leq M$, it is necessary and sufficient that 
\begin{align}\label{rel_young}
	\left|\sum_n a_n\bar{c}_n\right|\leq M\norm{\sum_n a_nf_n}{H}
\end{align}
for every finite sequence of scalars $(a_n)_n$.
\end{thm}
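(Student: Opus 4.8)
The plan is to prove the two implications of this classical duality statement separately; since it is a standard result, the only real care needed is with the linear-versus-conjugate-linear convention for the inner product, and the genuinely substantive point is the well-definedness of a certain functional.

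First, for necessity, I would assume there is $f\in H$ with $\|f\|_H\le M$ solving $(f,f_n)_H=c_n$ for all $n$. Then, using $\overline{(f,f_n)_H}=(f_n,f)_H$ and linearity of the inner product in its first slot, for any finite scalar sequence $(a_n)_n$ one has
\[
\sum_n a_n\bar c_n=\sum_n a_n(f_n,f)_H=\Bigl(\sum_n a_nf_n,\,f\Bigr)_H,
\]
so the Cauchy--Schwarz inequality together with $\|f\|_H\le M$ yields \eqref{rel_young}. This direction is immediate.

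For sufficiency, let $F$ be the closed linear span of $(f_n)_n$ in $H$ and let $F_0\subset F$ be the subspace of finite linear combinations of the $f_n$. I would define a functional $\phi$ on $F_0$ by $\phi\bigl(\sum_n a_nf_n\bigr):=\sum_n a_n\bar c_n$. The key step — and the only place where hypothesis \eqref{rel_young} enters — is showing that $\phi$ is well defined and bounded: if $\sum_n a_nf_n=\sum_n b_nf_n$ in $H$, then $\sum_n(a_n-b_n)f_n=0$, and \eqref{rel_young} forces $\bigl|\sum_n(a_n-b_n)\bar c_n\bigr|\le M\cdot 0$, so the value of $\phi$ does not depend on the chosen representation; the same inequality gives $|\phi(g)|\le M\|g\|_H$ for $g\in F_0$. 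Hence $\phi$ extends uniquely to a bounded linear functional on $F$ with $\|\phi\|\le M$.

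Finally, I would apply the Riesz representation theorem on the Hilbert space $F$ to produce $f\in F\subseteq H$ with $\|f\|_H=\|\phi\|\le M$ and $\phi(g)=(g,f)_H$ for all $g\in F$. Testing against $g=f_n$ gives $(f_n,f)_H=\phi(f_n)=\bar c_n$, hence $(f,f_n)_H=c_n$ for every $n$, which is exactly the required solution with the prescribed norm bound. I do not anticipate any genuine obstacle here; the main thing to watch is keeping $\phi$ linear (not conjugate-linear) so that the Riesz theorem applies in the form stated, and correctly tracking the conjugations coming from the convention $(f,f_n)_H=c_n$.
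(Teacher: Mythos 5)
Your proof is correct, and it is essentially the standard proof of this classical duality statement. Note that the paper itself does not prove Theorem~\ref{thm_young}; it merely cites it to Young's book (\cite[Chapter 4, Section 1, Theorem 2]{young2001introduction}), so there is no in-paper argument to compare against. Your argument — Cauchy--Schwarz for necessity; for sufficiency, defining a functional on the finite linear span of $(f_n)_n$, using \eqref{rel_young} both for well-definedness (on kernel elements) and for boundedness, extending by continuity to the closed span, and then invoking the Riesz representation theorem to produce $f$ with $\|f\|_H=\|\phi\|\le M$ — is precisely the textbook proof. Your attention to the conjugations (so that $\phi$ is linear rather than conjugate-linear, and Riesz applies in the standard form) is indeed the one point where it is easy to slip, and you handled it correctly.
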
 

\noindent We are now ready to give the proof of the main controllability result.

\begin{proof}[\bf Proof of Theorem \ref{control_thm}]
Using Lemma \ref{moment_lemma}, it is sufficient to show that, for each initial data $(y^0,y^1)\in\mathbb H_s^{3}(\I)\times\mathbb H_s^{2}(\I)$ given by \eqref{data_dec}, there exists $\widetilde{u}\in L^2(\Q)$ such that \eqref{moment_pb} is satisfied. From Theorem \ref{thm_young}, this is equivalent to show that

\begin{align}\label{young_ineq}
	\left| \sum_{(n,j)\in S} (\mu_{|n|}^j\bar{y}_n^0+\bar{y}_n^1)a_n^j\right|^2 \leq C\int_0^T\int_{\omega_0} \left|\sum_{(n,j)\in S} a_n^je^{i\,\sgn(n)c\rho_{|n|}^{\frac{1}{2s}}x}e^{-\lambda_n^jt}\right|^2\,dxdt,
\end{align} 
for all finite sequence $(a_n^j)_{(n,j)\in S}$. We notice that
\begin{align}\label{ineq1}
	\left| \sum_{(n,j)\in S} (\mu_{|n|}^j\bar{y}_n^0+\bar{y}_n^1)a_n^j\right|^2 &\leq \left(\sum_{(n,j)\in S} \rho_{|n|}^2\left|\mu_{|n|}^j\bar{y}_n^0+\bar{y}_n^1\right|^2\right)\left(\sum_{(n,j)\in S} \frac{|a_n^j|^2}{\rho_{|n|}^2}\right) \notag 
	\\
	&\leq C\norm{(y^0,y^1)}{\mathbb H_s^{3}(\I)\times\mathbb H_s^{2}(\I)}^2 \left(\sum_{(n,j)\in S} \frac{|a_n^j|^2}{\rho_{|n|}^2}\right).
\end{align}

Let us mention that, if $c\notin{\mathcal V}$, then all the eigenvalues are simple and the separation of the second term in the last inequality is not needed. 
On the other hand,  using \eqref{eq:insum} and taking into account that we can have at most one double eigenvalue $\lambda^2_{-n_c}$ (if $c\in {\mathcal V}$, see Lemma \ref{lemma:dist20}), we can deduce that
\begin{align*}
	\int_0^T&\int_{\omega_0}\left| \sum_{(n,j)\in S} a_n^j e^{i\,\sgn(n)c\rho_{|n|}^{\frac{1}{2s}}x}  e^{-\lambda_n^jt}\right|^2\,dxdt = \int_{\omega_0}\int_{-\frac{T}{2}}^{\frac{T}{2}}\left| \sum_{(n,j)\in S} a_n^j e^{\lambda_n^j \frac{T}{2}} e^{i\,\sgn(n)c\rho_{|n|}^{\frac{1}{2s}}x}  e^{-\lambda_n^j t}\right|^2\,dtdx
	\\
	&\geq C \left(|\omega_0|
	\sum_{(n,j)\in S\setminus\{(-n_c,2),\,(n_c,3)\}}\frac{|a_n^j|^2}{\rho_{|n|}^2} + \int_{\omega_0} \left|a_{-n_c}^2 e^{\lambda_{-n_c}^2 \frac{T}{2}} e^{-i\,\rho_{|n_c|}^{\frac{1}{2s}}x} +  a_{n_c}^3 e^{\lambda_{n_c}^3 \frac{T}{2}} e^{i\,\rho_{|n_c|}^{\frac{1}{2s}}x}\right|^2 \,dx\right)\!.
\end{align*}
Since the map
\begin{align*}
	\CC^2\ni(a,b)\mapsto \left(\int_{\omega_0}\left|ae^{-i\,\rho_{|n_c|}^{\frac{1}{2s}}x} + be^{i\,\rho_{|n_c|}^{\frac{1}{2s}}x}\right|^2\,dx\right)^{\frac 12}
\end{align*}
is a norm in $\CC^2$, it follows that 
\begin{align}\label{ineq2}
	\int_0^T\int_{\omega_0}\left| \sum_{(n,j)\in S} a_n^j e^{i\,\sgn(n)\rho_{|n|}^{\frac{1}{2s}}x}  e^{-\lambda_n^jt}\right|^2\,dxdt \geq C \sum_{(n,j)\in S}\frac{|a_n^j|^2}{\rho_{|n|}^2}.
\end{align}
From \eqref{ineq1} and \eqref{ineq2} we immediately obtain \eqref{young_ineq}. The proof is finished.
\end{proof}

We conclude the paper with the following observation.

\begin{rem} 
{\em
We notice the following facts.
\begin{enumerate}
	\item[(a)] We have proved that the control time has to be larger than
	\begin{align*}
		T=2\pi\left(\frac{1}{|c|}+\frac{1}{|c+\gamma|}+\frac{1}{|c-\gamma|}\right).
	\end{align*}
This result is a consequence of the particular construction on the biorthogonal sequence in Theorem \ref{te:bio}. The determination of the optimal control time remains an interesting open problem.
	\item[(b)] Let us mention that the space of controllable initial data is larger than the one given by Theorem \ref{control_thm}. This is a consequence of the fact that  the small weight $\frac{1}{\rho_{|n|}^2}$ in  \eqref{ineq2} affects only some terms in the right hand side series. However, it is not easy to identify a larger classical space of controllable initial data than $\mathbb H_s^{3}(\I)\times\mathbb H_s^{2}(\I)$.
	\item[(c)] We already anticipated that, as it is expectable, when $M=0$ the spectrum of our operator coincides with the one of the fractional wave operator without memory. For this equation, it was proved in \cite{biccari2018internal} that null controllability is not achievable for any $s\in(0,1)$ (see also \cite{warma2018analysis}, where analogous results were obtained for a strongly damped fractional wave equation). Nevertheless, in the mentioned papers the authors always consider controls supported in fixed regions, either in the interior or in the exterior of the domain. On the other hand, the results we obtained in this work show that controllability may be achieved for any $\frac 12<s<1$, provided that the control is moving in time.
\end{enumerate}
}
\end{rem}

\noindent{\bf Acknowledgments}:
\begin{itemize}
\item The work of the first author is supported by the European Research Council (ERC) under the European Union’s Horizon 2020 research and innovation programme (grant agreement NO: 694126-DyCon). 
\item The work of the first author was partially supported by the Grants MTM2014-52347, MTM2017-92996 and MTM2017-82996-C2-1-R COSNET of MINECO (Spain), and by the ELKARTEK project KK-2018/00083 ROAD2DC of the Basque Government.
\item The work of both authors is supported by the Air Force Office of Scientific Research (AFOSR) under Award NO:  FA9550-18-1-0242. 
\item Part of this work was done during the first author's visit at the University of Puerto, Rico Rio Piedras Campus, and he would like to thank the members of this institution for their kindness and warm hospitality. 
\end{itemize}

\bibliographystyle{plain}
\bibliography{biblio}

\end{document}